\newtheorem{theorem}{Theorem}
\newtheorem{corollary}{Corollary}
\newtheorem{lemma}{Lemma}
\theoremstyle{definition}
\newtheorem{example}{Example}
\newtheorem{examplecont}{Example}
\newcommand{\Exp}[2]{{\text{E}_{#1}}[ \ensuremath{ #2 } ]  }
\title{Bayes-optimal prediction with frequentist coverage control}
\author{Peter Hoff \\ Department of Statistical Science \\ Duke University}
\date{\today}
\begin{document}
\maketitle

\begin{abstract}
This article illustrates how indirect or prior information can be 
optimally used to construct a 
prediction region that maintains a target frequentist coverage rate.
If the indirect information is accurate, the volume of the prediction region is lower on average than that of other regions with the same coverage rate. Even if the indirect information is
inaccurate, the resulting region still maintains the target coverage rate. 
Such a prediction region can be constructed for models 
that have a complete sufficient statistic, which includes 
many widely-used parametric and nonparametric models. 
Particular examples include a
Bayes-optimal conformal prediction procedure that maintains a constant coverage 
rate across distributions in a nonparametric model, as well as a prediction 
procedure for the normal linear regression model 
that can utilize a regularizing prior distribution, 
yet  maintain 
a frequentist coverage rate that is constant as a function of 
the model parameters and explanatory variables. 
No results in this article rely on asymptotic approximations.

\smallskip

\noindent \textit{Keywords:} conformal prediction, disintegration, hypothesis testing, Neyman-Pearson lemma, tolerance region. 

\end{abstract}

\section{Introduction} 
A standard statistical inference task is to construct a prediction region, that is,  a set of plausible values for an unobserved random object $Y$ having sample space $\mathcal Y$ based on a realization of another
 random object $X$ having sample space $\mathcal X$.   
A non-randomized procedure for constructing 
a prediction region 
is a set-valued function 
$\mathcal X \to  2^\mathcal Y$, for example 
denoted by $x\mapsto A_x$, where
$A_x\subset\mathcal Y$ is the set of 
predicted values for $Y$ when $X$ is observed to be $x$. 

A prediction procedure is precise if its expected volume 
is small, and it is accurate if
its coverage probability is high. 
The coverage probability of a prediction procedure $x\mapsto A_x$ 
is typically defined
as the probability of the event $Y\in A_X$.
More generally, 
let $\mathcal P = \{ P_\theta : \theta\in \Theta\}$ be a
collection of joint probability distributions on 
measurable subsets of 
$\mathcal X\times \mathcal Y$.  
The coverage probability of the set-valued function $x\mapsto A_x$
for a given $\theta$ is simply $P_\theta(A)$, 
where $A=\{ (x,y) : y\in A_x\}$ is the graph of
 $x\mapsto A_x$.
A prediction procedure $A$ for which 
$P_\theta(A)\geq 1-\alpha$ for some target value of $\alpha$ and for all $\theta$ is generally referred to as having 
$1-\alpha$ frequentist coverage
\citep{aitchison_dunsmore_1975}. 
In what follows, if
$P_\theta(A)= 1-\alpha$ for all $\theta$ then $A$ will be referred to 
as a $1-\alpha$ constant coverage prediction region. 
\citet{patel_1989} reviews a variety of 
methods for constructing prediction regions 
that have frequentist coverage control, the most widely-used among them perhaps being those that are derived from pivotal 
quantities \citep{lawless_fredette_2005}. 

The precision of a prediction region can be quantified 
with its expected volume.
Let $\mu$ be a volume measure on 
measurable subsets of $\mathcal Y$. 
We define the risk function of
$A$ to be 
the  expected volume of $A_X$  as a function of $\theta$: 
\begin{equation}
R_\theta(A)  = {\rm E}_{\theta}[ \mu(A_X) ].   
\label{eqn:risk}
\end{equation} 
As with confidence regions, 
precision can be increased at the expense of coverage, and so we
compare among, or optimize over, regions that share a common 
coverage. 
A prediction region $A$ is better than $A'$ if  
$R_\theta(A)\leq R_\theta(A')$  and 
$P_\theta(A) \geq P_\theta(A')$ for all $\theta$, with inequality 
for some $\theta$. 
However, as with other types of statistical decision problems, 
typically there is 
not a uniformly best prediction region procedure. 
This motivates the identification of 
an optimal member of a reduced class of procedures, 
or identification of a procedure that performs well in a 
particular region of the parameter space. 
As an example of the former approach, \citet{evans_fraser_1980} 
show that for the normal linear model, the standard prediction region
obtained by pivoting is,
for a given coverage rate, 
optimal among all regions that are equivariant with respect to 
affine transformations. Such a prediction 
region performs equally well across the parameter space, 
in the sense that the expected volume of the prediction region 
(appropriately scaled by the variance) 
does not 
depend on the model parameters. 

In this article, prediction regions that share a 
common frequentist coverage are 
compared to each other in terms of their 
average risk across the parameter space. 
The motivation for this criterion is that 
in many applications there is indirect information or prior knowledge
that some distributions in $\mathcal P$ are more plausible than 
others. In such cases, it may be preferable to have 
a prediction region that performs  well for 
values of $\theta$ that are most plausible, at the expense 
of worse performance for less plausible values. 
This suggests evaluating a prediction region $A$ with a
Bayes risk,
\begin{equation}
 R(A) = \int R_\theta(A) \, \pi(d\theta), 
\label{eqn:brisk}
\end{equation} 
where $\pi$ is a probability distribution that 
gives a large weight to $\theta$-values for which 
$R_\theta(A)$ is desired to be small.  For example, 
 $\theta$ could be a vector 
of linear regression coefficients, many of which are suspected to 
be close or equal to zero. 
In this case, 
we might prefer a prediction region that has particularly 
low expected volume when $\theta$ is sparse, over, for example,
an equivariant region
that has the same expected volume for all $\theta$-values. 
While we will refer to $\pi$ as a prior distribution and 
$R(A)$ as a Bayes risk,
$\pi$ may or may not be a formal prior distribution
that describes subjective beliefs about the value of
$\theta$. Most generally, $\pi$ can simply be viewed as a weighting function 
that prioritizes regions of the parameter space.

The main result of this article is that, 
for many commonly-used statistical models, 
it is possible to find a prediction region that 
maintains a target frequentist coverage rate and is also 
Bayes-optimal for 
its coverage. 
Specifically, 
we are able to construct a set  $A^\pi \subset \mathcal X\times \mathcal Y$
such that 
$P_\theta(A^\pi) \geq 1- \alpha$ for all  $\theta$, and 
$R(A^\pi) \leq R(A)$ for all sets
 $A$  having the same coverage as $A^\pi$. 
In particular, if $A^\pi$ has a constant coverage rate of $1-\alpha$, 
then 
$A^\pi$ is Bayes-optimal among all $1-\alpha$ 
constant coverage 
prediction regions. 
Such a prediction region $A^\pi$ is 
Bayes-optimal among procedures 
that share its
frequentist coverage rate. We 
follow \citet{yu_hoff_2018} by 
referring to such a procedure 
as ``frequentist and Bayesian'', or FAB. 
Statistical procedures of this type  go back at least to 
\citet{pratt_1963}, who constructed 
a constant coverage confidence interval for the mean of 
a normal population that has minimum prior expected width among 
intervals with the same frequentist coverage. 
Related to 
this is the ``Bayes-non-Bayes compromise'' of \citet{good_1992}, 
whereby test statistics are obtained from Bayes factors, but are compared 
to frequentist null distributions, typically via permutation. 
\citet{yu_hoff_2018} and \citet{hoff_yu_2019} extended 
Pratt's Bayes-optimal frequentist interval 
to multiparameter settings, in which the prior distribution may be 
empirically estimated from the data using a hierarchical model, 
resulting in an adaptive 
confidence interval procedure with guaranteed frequentist coverage, even if the hierarchical model is wrong.

This article shows how to construct analogous
procedures for prediction in models
that have a complete sufficient statistic. 
This includes many 
exponential family models such as multinomial, linear regression and 
generalized linear 
models, some models with varying support, nonparametric models 
\citep{bell_blackwell_breiman_1960}, and others \citep{mattner_1996}.  
As shown in \citet{faulkenberry_1973}, when a sufficient statistic is 
available, a prediction region with a desired coverage rate may be constructed
by inverting the acceptance regions 
of a collection of conditional point-null hypothesis tests. 
Faulkenberry also observed that prediction 
regions with constant coverage must have constant 
conditional coverage, given a complete sufficient statistic. 
In the next section
we show how \emph{all}
prediction regions may be constructed from
conditional tests, and how all
prediction regions with common 
coverage as a function of the parameter 
must also have common conditional coverage as a function 
of the complete sufficient statistic. 
This fact is used in Section 3 to show how a Bayes-optimal prediction 
region for a given coverage  may be found by constructing 
Bayes-optimal conditional tests. 
This is done by obtaining a joint disintegration of the
the 
probability 
measures $\{ P_\theta:\theta\in\Theta\}$ that define coverage, and the
non-probability measure $R$ that defines the Bayes risk. 
The conditional tests that 
minimize the risk 
subject to the constraint on frequentist coverage can then be identified with 
a variant of the Neyman-Pearson lemma. 
None of these results rely on asymptotic approximations.

While the methodology in this article applies generally 
to models with  a complete sufficient statistic, 
Section 4 considers a few specific scenarios in detail, 
including prediction for multivariate normal and normal linear regression 
models, as well as nonparametric prediction using conformity scores 
as described in \citet{gammerman_vovk_vapnik_1998}.
In particular, it is shown that the Bayes-optimal 
choice of a conformity score is,
not surprisingly, 
the Bayesian posterior predictive density. 
The resulting FAB prediction region is Bayes optimal among 
nonparametric regions with constant frequentist coverage, 
and is of course different from a fully Bayesian posterior predictive region, 
as the latter does not have constant frequentist coverage. 
In this sense, the conformal FAB approach optimally   
incorporates prior information while maintaining a constant frequentist coverage rate. 
Additional aspects of the FAB prediction  methodology, 
and some directions for further research are 
discussed in Section 5. Proofs are in an appendix.

\section{Prediction regions via sufficiency}

\subsection{Review of Faulkenberry's construction} 
Let $X$ and $Y$ be random objects taking values in spaces $\mathcal X$ and 
$\mathcal Y$ respectively, with a 
joint distribution that 
is a member of a model $\mathcal P=\{ P_\theta: \theta\in \Theta\}$ 
on $\mathcal X\times \mathcal Y$. 
For a model $\mathcal P$ with a sufficient statistic $Z:\mathcal X\times \mathcal Y \rightarrow  \mathcal Z$, 
\citet{faulkenberry_1973} proposed constructing a $1-\alpha$ 
prediction 
region $A$ for $Y$ from $X$ as follows: 
\begin{enumerate}
\item For each $z\in \mathcal Z$, 
\begin{enumerate} 
\item identify $P^Y_z$, the conditional distribution of $Y$ given $\{ Z=z \}$,
      which by sufficiency does not depend on $\theta$; 
\item find a set $C_z\subset \mathcal Y$ such that
     $P_z^Y( C_z ) = 1-\alpha$.  
\end{enumerate}
\item Let  $A = \{ (x,y): y\in C_{Z(x,y)} \}$,
      so $y\in A_x \Leftrightarrow y\in C_{Z(x,y)}$.
\end{enumerate}
The conditional probability of the event $Y\in A_X$ 
given $\{Z=z\}$ is $P_z^Y ( C_z) = 1-\alpha$ for all $z$, and so its unconditional probability is equal to 
$1-\alpha$ as well.

\begin{example} 
\label{ex:simplenormal}
Let $X\sim N(\theta,k\sigma^2)$ and 
$Y\sim N(\theta,\sigma^2)$ be independent
for some unknown value of $\theta\in \mathbb R$ and known value of 
$\sigma^2>0$.  Then $Z= (X + kY)/(1+k)$ is a complete sufficient 
statistic, with $Y|\{Z=z\} \sim N(z,\sigma^2/(k+1))$. 
For any function $\delta:\mathbb R\rightarrow\mathbb R$, 
define for each $z\in \mathbb R$ 
the set  
\begin{equation}
 C_z = \left \{ y: \left | \frac{z-y}{ \sigma/\sqrt{k+1}} + \delta(z) 
 \right| ^2 
 < \chi^2_{1, \delta(z)^2,1-\alpha} \right \} ,
\end{equation}
where $\chi^2_{1,\delta^2,1-\alpha}$ is the $1-\alpha$ 
quantile of the $\chi^2_1$ distribution with 
noncentrality parameter $\delta^2$. 
Then $P_z^Y(C_z)= 1-\alpha$ for each $z$.
Defining $A$ as in step 2 gives
\begin{equation}
 A = \left \{ (x,y): \left | \frac{x-y}{ \sigma\sqrt{k+1}} + \delta(Z(x,y)) \right| ^2 
 < \chi^2_{1, \delta(Z(x,y))^2,1-\alpha} \right \},
\end{equation}
which is a $1-\alpha$ constant coverage prediction region 
for any choice of $\delta$. For example, if 
$\delta$ is identically zero then 
$A_x$  is  the 
standard prediction 
interval $x\pm \sigma\sqrt{k+1}\times \Phi^{-1}(1-\alpha/2)$ obtained via the pivotal quantity $X-Y$, where 
$\Phi^{-1}$ is the standard normal quantile function. 
In Section 4 we will obtain the 
function $\delta$ that minimizes the Bayes risk 
under a normal prior distribution for $\theta$, 
and generalize the method to multivariate normal models 
with unknown variance. 

\end{example}

Faulkenberry specifically  considered 
the case that $X$ and $Y$ are independent and 
that $Z$ is a complete sufficient statistic, 
but only sufficiency - not independence or completeness - is 
necessary to ensure that a set obtained from Faulkenberry's construction has  $1-\alpha$ constant coverage.  Faulkenberry considers
completeness because it provides a
characterization of the set of prediction 
regions that have $1-\alpha$ constant coverage, that is, 
regions for which $P_\theta(A) = 1-\alpha$ for all $\theta$. 
For such a region, Faulkenberry points out that
$P_\theta( A | Z ) = 1-\alpha$ almost surely for each $\theta$, 
because  ${\rm E}_\theta[ P_\theta( A | Z ) ] = 
  P_\theta(A) =1-\alpha$ for all $\theta$
and $Z$ is complete.
Faulkenberry therefore concludes that, for models with a complete
sufficient statistic,
a prediction region has 
constant coverage as a function of the parameter 
if and only if it has constant conditional coverage 
(almost surely) as a function of the 
complete sufficient statistic.

Can all prediction procedures for models with a complete sufficient statistic be derived from Faulkenberry's construction? The answer is no, for two reasons:
The first reason is that not all subsets of $\mathcal X\times \mathcal Y$
can be expressed 
as $A = \{ (x,y): y\in C_{Z(x,y)} \}$ for some set-valued function $C$ 
mapping $\mathcal Z$ to subsets of $\mathcal Y$, unless 
some additional conditions on $Z$ are met. 
The second reason is that, as pointed out by \citet{dunsmore_1976}, there may not exist
prediction procedures $A$ with constant coverage, for example, 
if the $P_\theta$'s are discrete distributions. 
In this case, coverage above or equal to $1-\alpha$ may still be maintained
by choosing a $C_z$ in Faulkenberry's construction so 
that $P_z^Y(C_z)\geq 1-\alpha$ for all $z$.
However, while every set with conditional  
coverage  of at least $1-\alpha$ also has marginal coverage 
of at least $1-\alpha$, the converse is not necessarily true, even with 
completeness of $Z$: 
For prediction of a binomial random variable,  Dunsmore 
provides an example of a 
prediction region  $A$ 
for which $P_\theta( A ) \geq 1-\alpha$ for all $\theta$
but  $P( A |Z=z) $ falls below $1-\alpha$ for some values of $z$. 
This means that in some cases 
the set of prediction regions having 
coverage greater than $1-\alpha$ for all $\theta$ 
is a proper superset of 
those with conditional coverage greater than  $1-\alpha$ for all $z$, and so 
the best procedure 
that can be obtained from Faulkenberry's construction may not be the best 
procedure obtainable. 

In the remainder of this section we extend Faulkenberry's method to address these limitations.
In the next subsection we generalize Faulkenberry's construction
to provide a complete characterization of
set-valued functions from   $\mathcal X$ to subsets of
$\mathcal Y$ in terms of set-valued functions from
$\mathcal Z$ to subsets of
$\mathcal X\times \mathcal Y$.
In Subsection 2.3 we show how complete sufficiency 
allows for a characterization of all procedures that have the same coverage, in terms of collections of point-null hypothesis tests that have the same conditional size.  
In particular, given a candidate prediction procedure, we 
can characterize the 
class of procedures with the same coverage, 
and possibly find one with lower risk. 

\subsection{Characterizing set-valued functions}
Let $\mathcal X$ and $\mathcal Y$ be spaces. 
For a subset $A$ of $\mathcal X\times \mathcal Y$ and 
element $x\in \mathcal X$, 
the \emph{section} of $A$ at $x$ is the 
set  $A_x = \{ y: (x,y) \in A\}$, a subset of $\mathcal Y$.
The sections of $A$ define a set-valued function 
$\mathcal X \rightarrow 2^\mathcal Y$ given by 
$x\mapsto A_x$.  
Conversely, 
for each $x\in  \mathcal X$, let $A_x$ be a subset of $\mathcal Y$. 
Every such set-valued function has a \emph{graph} $A = \{ (x,y) : y\in A_x \}$, a subset of 
$\mathcal X\times \mathcal Y$.
The operations of calculating the graph of a set-valued function 
$\mathcal X \rightarrow 2^\mathcal Y$, 
and  
calculating the sections of a subset of $\mathcal X\times \mathcal Y$, 
are inverses of each other, 
and so there is a bijection between set-valued functions from $\mathcal X \rightarrow 2^\mathcal Y$ and subsets of $\mathcal X\times \mathcal Y$.  
As such, in what follows we will use
the same symbol (e.g.\ ``$A$'') for a
set-valued function and its graph, 
and whether or 
not the symbol represents a function $\mathcal X\rightarrow 2^\mathcal Y$ 
or a subset of $\mathcal X\times \mathcal Y$  will be clear from context. 

Another representation of a subset of $\mathcal X\times \mathcal Y$ is 
given by any 
surjective mapping $Z:\mathcal X\times \mathcal Y \rightarrow \mathcal Z$. Overloading notation somewhat, 
for each $z\in \mathcal Z$ let $A_z = A\cap Z^{-1}\{z\}$. 
Then the mapping $z\mapsto A_z$ 
is a set-valued function 
from $\mathcal Z$ to $2^{\mathcal X\times \mathcal Y}$.  
Conversely, if 
$A_z$ is a subset of the fiber $Z^{-1}\{z\}$ for each $z\in \mathcal Z$, 
then $\cup_{z\in \mathcal Z} A_z$ is a subset of $\mathcal X\times \mathcal Y$. 
We summarize this observations and those of the preceding paragraph as follows:
\begin{lemma}
\label{lem:svfr} 
Let $Z: \mathcal X \times \mathcal Y \rightarrow \mathcal Z$ be a surjection. 
Then there is a bijection between each pair of the following sets:
\begin{enumerate}
\item $2^{\mathcal X\times \mathcal Y}$; 
\item set-valued functions $\mathcal X \rightarrow 2^\mathcal Y$;  
\item set-valued functions $A: \mathcal Z \rightarrow \mathcal X\times \mathcal Y$ for which $A_z \subset Z^{-1}\{z\}$.  
\end{enumerate}
\end{lemma}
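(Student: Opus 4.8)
The plan is to prove the three asserted bijections by exhibiting explicit maps in each direction and checking that the two round trips are the identity; since a bijection between items~1 and~2 composed with a bijection between items~1 and~3 yields a bijection between items~2 and~3, it suffices to treat the pairs $\{1,2\}$ and $\{1,3\}$ separately.

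For the pair $\{1,2\}$, let $S$ send a subset $A\subset\mathcal X\times\mathcal Y$ to the set-valued function $x\mapsto A_x=\{y:(x,y)\in A\}$, and let $G$ send a set-valued function $f:\mathcal X\to 2^{\mathcal Y}$ to its graph $\{(x,y):y\in f(x)\}$. First I would check $G\circ S=\mathrm{id}$: for $A$, the graph of $x\mapsto A_x$ is $\{(x,y):y\in A_x\}=\{(x,y):(x,y)\in A\}=A$. Then $S\circ G=\mathrm{id}$: for $f$, the section at $x$ of its graph is $\{y:(x,y)\in G(f)\}=\{y:y\in f(x)\}=f(x)$. Both verifications are immediate once the set-builder notation is unwound, and use nothing beyond the definitions of section and graph.

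For the pair $\{1,3\}$, the key structural fact is that, because $Z$ is a function defined on all of $\mathcal X\times\mathcal Y$, the fibers $Z^{-1}\{z\}$, $z\in\mathcal Z$, are pairwise disjoint and cover $\mathcal X\times\mathcal Y$, and surjectivity of $Z$ makes each fiber nonempty, so they form a genuine partition. Let $D$ send $A\subset\mathcal X\times\mathcal Y$ to the set-valued function $z\mapsto A_z=A\cap Z^{-1}\{z\}$, whose output lies in the class of item~3 since $A_z\subset Z^{-1}\{z\}$; and let $U$ send a set-valued function $(A_z)_{z\in\mathcal Z}$ with $A_z\subset Z^{-1}\{z\}$ to $\bigcup_{z\in\mathcal Z}A_z$. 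To see $U\circ D=\mathrm{id}$, note $\bigcup_{z}\big(A\cap Z^{-1}\{z\}\big)=A\cap\big(\bigcup_z Z^{-1}\{z\}\big)=A$ because the fibers cover the space. To see $D\circ U=\mathrm{id}$, fix $z_0$ and compute $\big(\bigcup_z A_z\big)\cap Z^{-1}\{z_0\}=\bigcup_z\big(A_z\cap Z^{-1}\{z_0\}\big)$; each term with $z\neq z_0$ is empty, since $A_z\subset Z^{-1}\{z\}$ and distinct fibers are disjoint, while the $z=z_0$ term equals $A_{z_0}$, since $A_{z_0}\subset Z^{-1}\{z_0\}$, so the intersection is exactly $A_{z_0}$.

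Composing $D$ with $S^{-1}=G$ then produces the bijection between items~2 and~3, completing the argument. The only point that requires any care — and it is mild — is the observation in the $\{1,3\}$ part that the fibers of $Z$ partition $\mathcal X\times\mathcal Y$, together with bookkeeping of which facts (the containments $A_z\subset Z^{-1}\{z\}$, and disjointness of distinct fibers) are invoked in each direction of that round trip; everything else is direct symbol manipulation with no analytic content, so I anticipate no genuine obstacle.
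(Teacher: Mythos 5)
Your proof is correct and follows essentially the same route as the paper: you construct the same section/graph and restrict-to-fiber/union correspondences, the only difference being that you verify the two round trips are identities while the paper checks injectivity and surjectivity separately, which is an immaterial stylistic choice. Your explicit remark that the fibers of $Z$ partition $\mathcal X\times\mathcal Y$ (and that the containment $A_z\subset Z^{-1}\{z\}$ is what makes $D\circ U=\mathrm{id}$ work) is exactly the point the paper's injectivity argument relies on.
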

In the next subsection, we will show how the bijection between 
items 2 and 3 can be used to represent a 
prediction procedure (item 2) as a collection of acceptance regions of 
hypothesis tests (item 3).

Now recall that Faulkenberry's method is to construct 
a prediction procedure $A:\mathcal X\rightarrow 2^\mathcal Y$ from 
a set-valued function $C:\mathcal Z \rightarrow 2^\mathcal Y$
by setting $A = \{ (x,y):  y \in C_{Z(x,y)} \}$. 
Can all 
prediction procedures 
be represented in this way? 
Since Lemma \ref{lem:svfr} shows there is a bijection between 
functions $\mathcal X \rightarrow 2^\mathcal Y$ and functions 
$\mathcal Z \rightarrow  2^{\mathcal X\times \mathcal Y}$, intuitively 
Faulkenberry's representation can only be complete if 
somehow $y$ and $Z(x,y)$ determine $x$, for example if 
$Z(x,y)=x+y$. In fact, such a condition is
necessary and sufficient for Faulkenberry's representation to be a
complete  characterization of the  
set-valued functions from $\mathcal X$ to $2^{\mathcal Y}$, 
or equivalently, subsets of $\mathcal X\times \mathcal Y$:
\begin{lemma}    
\label{lem:injectiverep}
Let $Z:\mathcal X\times \mathcal Y \rightarrow \mathcal Z$. 
A set  $A\subset \mathcal X\times \mathcal Y$ 
can be written as 
$A = \{ (x,y): y\in C_{Z(x,y)} \}$ for some $C:\mathcal Z \rightarrow 2^\mathcal Y$ 
if and only if 
the function $\mathcal X\rightarrow \mathcal Z$ given by $Z_y(x)= Z(x,y)$ is injective for each $y$. 
\end{lemma}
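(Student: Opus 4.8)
The plan is to prove the two implications separately: the ``if'' direction is constructive, building the set-valued function $C$ explicitly from $A$ and the injectivity hypothesis, while the ``only if'' direction is handled by contraposition, exhibiting a concrete set with no such representation as soon as some $Z_y$ fails to be injective. (As in Lemma~\ref{lem:svfr} I ignore measurability, and I read the statement with $A$ ranging over all subsets of $\mathcal X\times\mathcal Y$, so that the content is: Faulkenberry's representation is available for \emph{every} $A$ precisely when each $Z_y$ is injective — this matches the surrounding discussion of when the construction is ``complete.'')

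For the ``if'' direction, assume $Z_y:\mathcal X\to\mathcal Z$, $x\mapsto Z(x,y)$, is injective for every $y\in\mathcal Y$, and let $A\subset\mathcal X\times\mathcal Y$ be arbitrary. Define $C:\mathcal Z\to 2^\mathcal Y$ by
\begin{equation}
C_z = \{\, y\in\mathcal Y : (x,y)\in A \text{ for some } x\in\mathcal X \text{ with } Z(x,y)=z \,\}.
\end{equation}
I would then verify $A=\{(x,y):y\in C_{Z(x,y)}\}$ by checking both inclusions. If $(x,y)\in A$, then $x$ itself witnesses $y\in C_{Z(x,y)}$, giving one inclusion. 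Conversely, if $y\in C_{Z(x,y)}$, there is some $x'$ with $Z(x',y)=Z(x,y)$ and $(x',y)\in A$; injectivity of $Z_y$ forces $x'=x$, so $(x,y)\in A$. Injectivity is used only in this second step — without it the right-hand set can be strictly larger than $A$.

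For the ``only if'' direction I argue by contraposition. Suppose $Z_{y_0}$ is not injective for some $y_0$, so there are $x_1\neq x_2$ with $Z(x_1,y_0)=Z(x_2,y_0)=:z_0$. Take $A=\{(x_1,y_0)\}$. If $A=\{(x,y):y\in C_{Z(x,y)}\}$ for some $C$, then $(x_1,y_0)\in A$ gives $y_0\in C_{z_0}$; but then $y_0\in C_{Z(x_2,y_0)}$, so $(x_2,y_0)\in A$, contradicting $A=\{(x_1,y_0)\}$ since $x_2\neq x_1$. Hence no such $C$ exists.

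I do not expect a genuine obstacle here; the proof is essentially bookkeeping. The one point needing a little care is the definition of $C_z$ when $z$ lies outside the range of $Z_y$ for a given $y$: there the ``for some $x$'' clause is vacuous, so $y\notin C_z$, which is exactly right since no $(x,y)\in A$ can have $Z(x,y)=z$. The only conceptual subtlety is the quantifier reading noted above — on the literal single-$A$ reading the ``only if'' direction is false (e.g.\ $A=\emptyset$ is always representable), so the universal-over-$A$ interpretation is the intended one.
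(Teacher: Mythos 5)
Your proof is correct and follows essentially the same route as the paper's: the forward direction constructs $C_z$ as (in effect) the projection of $A\cap Z^{-1}\{z\}$ onto $\mathcal Y$ and uses injectivity of $Z_y$ only for the reverse inclusion, and the converse exhibits an $A$ containing $(x_1,y_0)$ but not $(x_2,y_0)$ exactly as the paper does. Your observation that the statement must be read as quantified over all $A$ is a fair reading of the lemma and matches how the paper's own proof treats it.
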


Similarly, if $x$ and $Z(x,y)$ determine $y$, then the subsets of $\mathcal X\times \mathcal Y$ can 
be characterized in terms of set-valued functions from $\mathcal Z$ to 
$2^\mathcal X$:
\begin{corollary}
\label{cor:accF} 
A set  $A\subset \mathcal X\times \mathcal Y$
can be written as
$A = \{ (x,y): x\in B_{Z(x,y)}\}$ for some 
$B:\mathcal Z \rightarrow 2^\mathcal X$
if and only if
the function $\mathcal Y\rightarrow \mathcal Z$ given by $Z_x(y)= Z(x,y)$ is injective for each $x$.
\end{corollary}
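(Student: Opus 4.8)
The plan is to deduce Corollary \ref{cor:accF} directly from Lemma \ref{lem:injectiverep} by the symmetry that interchanges the two coordinates of $\mathcal X\times\mathcal Y$, rather than repeating the argument. This works because Lemma \ref{lem:injectiverep} is stated for an arbitrary map $Z$ between arbitrary spaces, so it applies verbatim to any relabeled version of the setup.

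First I would introduce the coordinate-flipped objects. For $A\subset\mathcal X\times\mathcal Y$ let $\tilde A=\{(y,x):(x,y)\in A\}\subset\mathcal Y\times\mathcal X$, and let $\tilde Z:\mathcal Y\times\mathcal X\to\mathcal Z$ be given by $\tilde Z(y,x)=Z(x,y)$. Applying Lemma \ref{lem:injectiverep} with $(\mathcal X,\mathcal Y,Z,A)$ replaced by $(\mathcal Y,\mathcal X,\tilde Z,\tilde A)$ yields the equivalence: there exists $C:\mathcal Z\to 2^{\mathcal X}$ with $\tilde A=\{(y,x):x\in C_{\tilde Z(y,x)}\}$ if and only if, for each $x\in\mathcal X$, the map $\mathcal Y\to\mathcal Z$ sending $y\mapsto\tilde Z(y,x)$ is injective.

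Then I would translate both sides of this equivalence back through the flip. On the hypothesis side, $y\mapsto\tilde Z(y,x)=Z(x,y)$ is exactly the map $Z_x$ appearing in the corollary, so the injectivity condition is precisely the one in the corollary's statement. On the conclusion side, $\tilde A=\{(y,x):x\in C_{\tilde Z(y,x)}\}$ holds if and only if $A=\{(x,y):x\in C_{Z(x,y)}\}$; renaming $C$ to $B$ gives exactly the representation in the corollary, with the existential quantifier over $B$ carried along as part of the left-hand side of the iff. Hence the two equivalences coincide and the corollary follows.

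I do not expect a genuine obstacle here: the only point requiring care is the bookkeeping of which coordinate plays the role of ``$\mathcal X$'' and which of ``$\mathcal Y$'' when invoking the lemma, and in particular checking that the injectivity hypothesis lands on $Z_x$ (the map in $y$ for fixed $x$) rather than on $Z_y$. If one preferred to avoid the relabeling entirely, the same conclusion could be obtained by reproducing the proof of Lemma \ref{lem:injectiverep} with the roles of $x$ and $y$ exchanged throughout, but the reduction above is shorter and makes the duality between the lemma and the corollary transparent.
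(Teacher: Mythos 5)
Your proposal is correct and matches the paper's proof, which likewise obtains the corollary by applying Lemma \ref{lem:injectiverep} with the roles of $\mathcal X$ and $\mathcal Y$ interchanged; you have simply made the coordinate-swap bookkeeping explicit.
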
 

\begin{examplecont}[Continued]
For the statistic $Z(x,y)=(x+ky)/(1+k)$, both
$Z_x$ and $Z_y$ are injective for each $x$ and $y$. 
Let  $A$ be a subset of $\mathcal X\times \mathcal Y$, 
and let 
$C_z = \{ y: (z\times (1+k)-ky,y) \in  A  \} $ and 
$B_z = \{ x: (x, (z\times(1+k)-x)/k) \in  A  \}$. Then 
$A = \{ (x,y) : y \in C_{Z(x,y)} \} 
   = \{ (x,y) : x \in B_{Z(x,y)} \}$. 
\end{examplecont}

\begin{example}
Let $X\sim N(\theta,1)$ and 
$Y|\{X=x\}\sim N(\theta+ x,1)$ with unknown $\theta\in \mathbb R$. 
Then $Z(x,y) = y$ is a complete sufficient statistic. 
In this case, $Z_y$ is not injective, but $Z_x$ is, since $Z_x(y) = y$. 
Faulkenberry's construction does not characterize the prediction regions 
in this case, 
but a modification  does: For any $A\subset \mathcal X\times \mathcal Y$, let 
$B:\mathcal Y \rightarrow 2^{\mathcal X}$ be defined as 
$B_y = \{ x: (x,y) \in  A\}$, the section of the reflection of 
$A$ at $y$. Then clearly 
$A = \{ (x,y) : x \in B_y \}$. 
A $1-\alpha$ prediction region $A$ for $Y$ can be constructed 
by choosing $B_y$ so that $P_y^X( B_y ) =1-\alpha$ for each $y$ 
and then letting $A = \{ (x,y): x\in B_y \}$. 
\end{example} 

\begin{example} 
Let  $X \sim N_p(0, \sigma^2 I)$, 
and let $Y|\{ X= x\} \sim N_p( \theta x ,  I)$, where $\theta\in \mathbb R$ and $\sigma^2 \in \mathbb R^+$ are both unknown. 
Then 
$Z( x,  y) = 
  ( \lVert x \rVert  , x^\top y)$ is a complete sufficient statistic, 
but  $Z_{y}$ is not injective for any $y$, nor is 
$Z_{x}$ injective for any $x$. 
A subset $A$ of $\mathcal X\times \mathcal Y$ cannot in general 
be represented as 
$\{ (x,y) : x \in B_{Z(x,y)} \}$ or 
$\{ (x,y) : y  \in C_{Z(x,y)} \}$ 
for set-valued functions  
$B:\mathcal Z \rightarrow 2^\mathcal X$ or 
$C:\mathcal Z \rightarrow 2^\mathcal Y$. 
However, any $A\subset \mathcal X\times \mathcal Y$ can be expressed as $A = \{ (x,y) : (x,y) \in A_{Z(x,y)}\}$ 
simply by setting
$A_z = A \cap Z^{-1}\{z\}$. 
\end{example}

To summarize, a prediction region is a set-valued function
$\mathcal X\rightarrow 2^\mathcal Y$, or equivalently, 
a subset of $\mathcal X \times \mathcal Y$. 
For any surjective function $Z:\mathcal X\times \mathcal Y \rightarrow \mathcal Z$, 
there is a bijection between such 
set-valued functions and
set-valued functions $A:\mathcal Z \rightarrow 2^{\mathcal X \times \mathcal Y}$ for which $A_z \subset Z^{-1}\{z\}$ for all $z$ in $\mathcal Z$. 
Furthermore, if the function $y\mapsto  Z(x,y)$ is injective for each $x$, 
then there is a bijection between prediction regions and 
functions that map   $\mathcal Z$ to  $2^\mathcal X$. 
Similarly, if the function $x \mapsto Z(x,y)$ is injective for each $y$, 
then 
there is a bijection between prediction regions and 
functions that map $\mathcal Z$ to $2^\mathcal Y$.

\subsection{Characterizing regions as  hypothesis tests}
Let
$(\mathcal X, \mathcal F)$ and
$(\mathcal Y, \mathcal G)$
be measurable spaces, and let 
$(\mathcal X \times \mathcal Y,\mathcal A )$ be their product space, 
so that  $\mathcal A= \mathcal F\otimes \mathcal G$ is 
the smallest $\sigma$-algebra containing the rectangles $\{ F\times G: F\in \mathcal F, G\in \mathcal G\}$. 
As described previously, 
a prediction procedure for a $\mathcal Y$-valued random object $Y$ from 
an $\mathcal X$-valued random object $X$
is a set-valued function
$A: \mathcal X \rightarrow 2^{\mathcal Y}$, 
or alternatively, a subset $A$  of $\mathcal X\times \mathcal Y$. 
The
coverage of $A$ 
under any probability measure $P$ on $(\mathcal X\times \mathcal Y,\mathcal A)$ 
is simply $P(A)$, 
which in order to be well-defined, 
requires  $A$ to be $\mathcal A$-measurable. 
Conversely, if $A\in \mathcal A$ 
then $A_x \equiv\{ y:(x,y) \in A\} $ is a measurable subset of 
$\mathcal Y$ for each $x\in \mathcal X$, that is, $A:\mathcal X \rightarrow \mathcal G$ 
\citep[Theorem 34.A]{halmos_1950}. 
However, not all set-valued functions
$\mathcal X \rightarrow \mathcal G$ have $\mathcal A$-measurable graphs. 
\citep[Chapter 14]{rockafellar_wets_1998}.

Let $\mathcal P = \{ P_\theta : \theta\in \Theta\}$ 
be a statistical model for $(\mathcal X\times \mathcal Y, \mathcal A)$ that has
a boundedly complete regular sufficient statistic
$Z:(\mathcal X\times\mathcal Y,\mathcal A)\rightarrow (\mathcal Z,\mathcal H)$.
Regular  sufficiency means that there 
is a function $\mathcal Z \times \mathcal A \rightarrow [0,1]$,
  denoted $(z,A)\mapsto P_z(A)$,
for which 
\begin{enumerate}
\item $P_z$ is a probability measure on $(\mathcal X\times \mathcal Y ,\mathcal A)$ for all $z\in \mathcal Z$;
\item $P_Z(A) $ is a version of $P_\theta(A|Z)$ for all $\theta\in \Theta$
and $A\in \mathcal A$;
\item $P_z(  Z^{-1}\{z\} )=1$ for $\mathcal P$-almost all $z$. 
\end{enumerate}

Recall that $Z$ is boundedly complete if 
for all bounded measurable real-valued functions $f$,
$\Exp{\theta}{ f(Z)} =c $ for all $\theta\in \Theta$ implies 
$f(z)=c$ for
$\mathcal P$-almost all $z$. 
As noted by \citet{faulkenberry_1973}, completeness of $Z$ can be used to relate coverage  
to conditional coverage. In particular, suppose 
$A$ is a $1-\alpha$ constant coverage prediction region, so that 
$P_\theta(A) = 1-\alpha$ for all $\theta$. 
Since $P_\theta(A) = \Exp{\theta}{ P_Z(A) }$, 
completeness of $Z$ implies $P_z(A) = 1-\alpha$ 
for almost all $z$. Combining this with Lemma \ref{lem:svfr}
gives the following characterization of all $1-\alpha$ constant 
coverage prediction regions:
\begin{theorem}
\label{thm:ccrep}
Let $Z$ be a boundedly complete sufficient statistic
for the model $\mathcal P= \{ P_\theta:\theta\in \Theta\}$ on
$(\mathcal X\times \mathcal Y, \mathcal A)$.
A prediction region $A\in \mathcal A$ has constant coverage $P_\theta(A)  = 1-\alpha$ for all $\theta\in \Theta$  if and only if
$A = \{ (x,y): (x,y) \in A_{Z(x,y)} \}$ for a set-valued function $A:\mathcal Z \rightarrow 2^{\mathcal X\times \mathcal Y}$ for which
\begin{enumerate} 
\item $\cup_{z\in \mathcal Z} A_z $ is measurable; 
\item $A_z$ is a subset of $Z^{-1}\{z\}$;
\item $P_z(A_z)=1-\alpha$
for $\mathcal P$-almost all $z$.
\end{enumerate}
\end{theorem}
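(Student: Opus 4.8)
The plan is to deduce both implications from Faulkenberry's completeness argument — already sketched above — combined with the bijection between items~1 and~3 of Lemma~\ref{lem:svfr}. The one computation I would lean on repeatedly is the fiber identity: writing $A_z := A\cap Z^{-1}\{z\}$ for each $z$, one has $A = \bigcup_{z\in\mathcal Z} A_z$, the fibers $Z^{-1}\{z\}$ are pairwise disjoint, and therefore, for any $A\in\mathcal A$, regular sufficiency gives $P_z(A) = P_z(A\cap Z^{-1}\{z\}) = P_z(A_z)$ for $\mathcal P$-almost all $z$, using that $P_z(Z^{-1}\{z\})=1$ for $\mathcal P$-almost all $z$. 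Everything else is bookkeeping around this identity.

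For the ``only if'' direction, I would start from $A\in\mathcal A$ with $P_\theta(A)=1-\alpha$ for all $\theta$ and set $A_z = A\cap Z^{-1}\{z\}$; Lemma~\ref{lem:svfr} then gives the representation $A=\{(x,y):(x,y)\in A_{Z(x,y)}\}$, item~1 holds because $\bigcup_z A_z = A\in\mathcal A$, and item~2 holds by construction. Each $A_z$ lies in $\mathcal A$ since $Z^{-1}\{z\}\in\mathcal A$ (singletons of $\mathcal Z$ being measurable for a regular sufficient statistic, as already presupposed by property~3 of regular sufficiency), so $P_z(A_z)$ is well defined. Next I would note that $z\mapsto P_z(A)$ is bounded and $\mathcal H$-measurable — the measurability being part of what it means for $P_Z(A)$ to be a version of $P_\theta(A\mid Z)$ — and that $\Exp{\theta}{P_Z(A)} = P_\theta(A) = 1-\alpha$ for every $\theta$. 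Bounded completeness of $Z$ then forces $P_z(A)=1-\alpha$ for $\mathcal P$-almost all $z$, and the fiber identity upgrades this to $P_z(A_z)=1-\alpha$ for $\mathcal P$-almost all $z$, which is item~3.

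For the ``if'' direction, I would take $z\mapsto A_z$ satisfying items~1--3, set $A=\bigcup_z A_z$ (measurable by item~1, and equal to $\{(x,y):(x,y)\in A_{Z(x,y)}\}$), observe that disjointness of the fibers together with $A_z\subset Z^{-1}\{z\}$ gives $A\cap Z^{-1}\{z\} = A_z$, and then invoke the fiber identity to get $P_z(A) = P_z(A_z) = 1-\alpha$ for $\mathcal P$-almost all $z$. Finally $P_\theta(A) = \Exp{\theta}{P_Z(A)} = 1-\alpha$ for all $\theta$, because ``$\mathcal P$-almost all $z$'' means the exceptional set is $P_\theta\circ Z^{-1}$-null simultaneously for every $\theta$, so integrating $P_Z(A)$ against $P_\theta\circ Z^{-1}$ loses nothing.

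I do not expect a deep obstacle — the substantive content is Faulkenberry's use of completeness, which the text has already recorded — but three points will need care. The main one is keeping track of the several ``$\mathcal P$-almost all $z$'' exceptional sets and checking each is genuinely $\mathcal P$-null, i.e.\ $P_\theta\circ Z^{-1}$-null for \emph{all} $\theta$ at once, so that passing between $P_z$-statements and $P_\theta$-statements by integration is lossless in both directions. Secondarily, I would be careful to cite the measurability of $z\mapsto P_z(A)$ built into the definition of a regular sufficient statistic, since bounded completeness applies only to measurable functions; and in the ``if'' direction I would note explicitly that items~1 and~2 already force $A_z=(\bigcup_{z'}A_{z'})\cap Z^{-1}\{z\}\in\mathcal A$, so item~3 is a genuine hypothesis rather than a hidden measurability assumption.
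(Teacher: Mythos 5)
Your proposal is correct and follows essentially the same route as the paper's proof: decompose $A$ into the fiber pieces $A_z=A\cap Z^{-1}\{z\}$, use bounded completeness of $Z$ to convert constant marginal coverage into constant conditional coverage on a set of $z$ that is $\nu_\theta$-full for every $\theta$, and use $P_z(Z^{-1}\{z\})=1$ ($\mathcal P$-a.e.) to pass between $P_z(A)$ and $P_z(A_z)$, integrating against $\nu_\theta$ for the converse. Your extra attention to the measurability of $z\mapsto P_z(A)$ and to tracking the exceptional null sets matches the paper's handling via the sets $H_1$, $H_2$ and $H$.
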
   

Note that 
each $A_z$ can be thought of as the 
acceptance region of a non-randomized 
size-$\alpha$ test of $H_z: (X,Y) \sim P_z$. 
If such a test exists for each $z$, then \emph{all} 
$1-\alpha$ constant coverage prediction regions may be expressed 
as inversions of such tests. 
If the distributions $\{P_z : z\in \mathcal Z\}$ are non-atomic,
then these size-$\alpha$ tests will exist for any choice of $\alpha$. 
Even for some cases where the $P_z$'s are discrete, 
there will exist constant coverage prediction regions for 
certain choices of $\alpha$, as illustrated in the following example:
\begin{example}  
Suppose $Y_1,\ldots, Y_{n+1}$ is an 
independent and identically distributed (i.i.d.)\
random sample 
and we wish to predict $Y_{n+1}$ from a realization of $X=(Y_1,\ldots, Y_n)$.  
Let the model $\mathcal P$ on $\mathcal Y^{n+1}$ be
such that the probability of ties among the $Y_i$'s is zero and 
that the set of unordered values of $Y_1,\ldots, Y_{n+1}$ constitute a 
boundedly complete sufficient statistic. 
This includes several nonparametric families \citep{bell_blackwell_breiman_1960}
as well as many parametric families \citep{mattner_1996}. 
Let $z =\{ y_1,\ldots,y_{n+1}\} \subset \mathcal Y$ be a possible value of 
the sufficient statistic. Then the conditional distribution of  
$(X,Y_{n+1})$ given $Z=z$ has mass $1/(n+1)!$ on each permutation of the vector $(y_1,\ldots, y_{n+1})$. It is therefore possible to find a set $A_z$ for 
$\mathcal P$-almost all $z$ 
such that $P_z(A_z) = 1-\alpha$ for any 
$\alpha=k/(n+1)!$ 
with integer $k$ between zero and $(n+1)!$. 
However, symmetry suggests that 
if $(y_1,\ldots,y_n, y_{n+1}) \in A_z$ then
  $(y_{p_1},\ldots,y_{p_n}, y_{n+1})$ should be 
in $A_z$ as well, for any permutation $p_1,\ldots, p_n$ of $1,\ldots,n$.   
Prediction regions with this kind of symmetry 
can only have constant $1-\alpha$ coverage 
for values $\alpha= k/(n+1)$ with 
$k\in \{0,1,\ldots, n+1\}$. 
\end{example}

In some cases for which the $P_z$'s are discrete there will not exist
constant coverage prediction regions. 
However, completeness can still be used to characterize prediction 
regions that have the same coverage. 
Specifically, 
let $A$ and $A'$ be two 
prediction regions for which $P_\theta(A) =P_\theta(A')$ for all $\theta\in \Theta$. 
Then 
\begin{align}
 0& =   P_\theta(A) -P_\theta(A')  \\ 
  & =       \Exp{ \theta}{ P_Z(A) - P_Z(A') }
\end{align}
for all $\theta$, which implies that $P_z(A) = P_z(A')$ for $\mathcal P$-almost all $z$ 
by the bounded completeness of $Z$. 
Combining this observation with 
Lemma \ref{lem:svfr} gives the following characterization 
of prediction regions with the same coverage:
\begin{theorem}
\label{thm:htrep}  
Let $Z$ be a boundedly complete sufficient statistic
for the model $\mathcal P= \{ P_\theta:\theta\in \Theta\}$ on
$(\mathcal X\times \mathcal Y, \mathcal A)$.
Sets $A$ and $A'$ in $\mathcal A$ 
satisfy $P_\theta(A) = P_\theta(A')$ for all $\theta$
if and only if  
$A= \{ (x,y) : (x,y) \in A_{Z(x,y)} \}$ 
and 
$A'= \{ (x,y) : (x,y) \in A'_{Z(x,y)} \}$
for set-valued functions $A$ and $A'$ mapping
$\mathcal Z \rightarrow 2^{\mathcal X\times \mathcal Y}$ for which 
\begin{enumerate} 
\item $\cup_{z\in \mathcal Z} A_z $ and
       $\cup_{z\in \mathcal Z} A_z' $  are measurable;
\item  $A_z$ and $A_z'$ are  subsets of $Z^{-1}\{z\} $; 
\item $P_z(A_z) =  P_z(A_z')$ for $\mathcal P$-almost all $z$. 
\end{enumerate} 
\end{theorem}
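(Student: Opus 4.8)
The plan is to show that Theorem~\ref{thm:htrep} follows from Lemma~\ref{lem:svfr} together with the short computation displayed just before its statement, in exactly the same way that Theorem~\ref{thm:ccrep} does; indeed the constant-coverage case is recovered by specializing $A'$ to a region with $P_z(A'_z)\equiv 1-\alpha$. Throughout I take $Z$ to be onto (for $z$ outside the image the fiber $Z^{-1}\{z\}$ is empty, so the associated section is forced to be $\emptyset$ and nothing is lost), and I assume singletons lie in $\mathcal H$ so that each fiber $Z^{-1}\{z\}$ is $\mathcal A$-measurable. First I would fix the correspondence: given $A\in\mathcal A$, set $A_z = A\cap Z^{-1}\{z\}$; by Lemma~\ref{lem:svfr} the map $A\mapsto(A_z)_{z\in\mathcal Z}$ is a bijection onto the set-valued functions $\mathcal Z\to 2^{\mathcal X\times\mathcal Y}$ with $A_z\subset Z^{-1}\{z\}$, and under it $A=\cup_z A_z$, so measurability of $A$ is precisely item~1 and the fiber containment is item~2. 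Thus all the content of the theorem sits in item~3, and the two directions amount to passing between ``$P_\theta(A)=P_\theta(A')$ for all $\theta$'' and ``$P_z(A_z)=P_z(A'_z)$ for $\mathcal P$-almost all $z$''.

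For the ``only if'' direction, assume $P_\theta(A)=P_\theta(A')$ for all $\theta$. Using property~2 of regular sufficiency, $P_\theta(A)=\Exp{\theta}{P_Z(A)}$ and $P_\theta(A')=\Exp{\theta}{P_Z(A')}$, so $\Exp{\theta}{P_Z(A)-P_Z(A')}=0$ for every $\theta$. The map $z\mapsto P_z(A)-P_z(A')$ is bounded and $\mathcal H$-measurable (measurability of $z\mapsto P_z(B)$ for fixed $B\in\mathcal A$ is exactly what makes $P_Z(B)$ a version of $P_\theta(B\mid Z)$), so bounded completeness of $Z$ forces $P_z(A)=P_z(A')$ for $\mathcal P$-almost all $z$. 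Finally, by property~3 of regular sufficiency, $P_z(Z^{-1}\{z\})=1$ for $\mathcal P$-almost all $z$, whence $P_z(A)=P_z(A\cap Z^{-1}\{z\})=P_z(A_z)$ and likewise $P_z(A')=P_z(A'_z)$ off a $\mathcal P$-null set; combining the three exceptional sets (a finite union of $\mathcal P$-null sets is $\mathcal P$-null) gives item~3.

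For the ``if'' direction, suppose $A$ and $A'$ are represented as stated and $P_z(A_z)=P_z(A'_z)$ for $\mathcal P$-almost all $z$. Since the fibers are disjoint and $A_z\subset Z^{-1}\{z\}$ we have $A\cap Z^{-1}\{z\}=A_z$ and similarly for $A'$, and item~1 makes $A=\cup_z A_z$ and $A'=\cup_z A'_z$ measurable. Running the previous paragraph backwards: property~3 gives $P_z(A)=P_z(A_z)$ and $P_z(A')=P_z(A'_z)$ for $\mathcal P$-almost all $z$, so the hypothesis yields $P_z(A)=P_z(A')$ for $\mathcal P$-almost all $z$; then, for each fixed $\theta$, integrating this equality against the law of $Z$ under $P_\theta$ (which annihilates $\mathcal P$-null sets) and using property~2 gives $P_\theta(A)=\Exp{\theta}{P_Z(A)}=\Exp{\theta}{P_Z(A')}=P_\theta(A')$.

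The argument is almost entirely bookkeeping, and I expect the only delicate points to be the ones flagged above: that $z\mapsto P_z(B)$ is genuinely measurable, so that bounded completeness can be applied to $P_Z(A)-P_Z(A')$; and that the identification $P_z(A)=P_z(A_z)$ is valid only off the $\mathcal P$-null set from property~3, so that item~3 is inherently an ``almost all $z$'' statement and the several exceptional null sets (one from completeness, one each from property~3 applied to $A$ and to $A'$) must be absorbed into a single $\mathcal P$-null set before all the equalities line up. The surjectivity reduction and the measurability of fibers are harmless standing assumptions rather than real obstacles.
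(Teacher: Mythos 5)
Your proof is correct and follows essentially the same route as the paper: the paper proves Theorem \ref{thm:ccrep} by defining $A_z = A\cap Z^{-1}\{z\}$, applying bounded completeness to the conditional coverage, using $P_z(Z^{-1}\{z\})=1$ to identify $P_z(A)$ with $P_z(A_z)$ off a null set, and integrating back for the converse, then states that Theorem \ref{thm:htrep} is proved the same way with $P_z(A)-P_z(A')$ in place of $P_z(A)-(1-\alpha)$. Your write-up is exactly that adaptation, with the null-set bookkeeping and the measurability of $z\mapsto P_z(B)$ made explicit.
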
 
This result allows us to characterize all prediction regions 
that have coverage equal to that of a given prediction region. 
In the next section, this result will be used to construct regions 
that are Bayes-optimal for their coverage.

\section{Bayes-optimal prediction regions} 

\subsection{Existence and uniqueness of optimal regions} 

As described in the Introduction, 
we define the risk of a prediction procedure $A\in \mathcal A$ 
under $P_\theta$ as 
its expected $\mathcal Y$-volume, so 
that $R_\theta(A) = \Exp{\theta}{ \mu( A_X  ) }$
where $\mu$ is a $\sigma$-finite measure on $(\mathcal Y,\mathcal G)$.    
Letting $P_\theta^X$ be the marginal distribution of $X$ under $P_\theta$, 
for any $A\in \mathcal A$ we have
\begin{align}
R_\theta(A) & = \int  \mu(A_x) \, P^X_\theta(dx)   \\ 
  &  = \int \int  1( y\in A_x) \,\mu(dy) \, P^X_\theta(dx)   \\
  & = \int \int 1( (x,y) \in A) \, P^X_\theta(dx ) \, \mu(dy) 
\end{align}
by Tonelli's theorem, and so 
$R_\theta$ is simply the product measure $P^X_\theta \times \mu$  
on $(\mathcal X\times \mathcal Y, \mathcal A)$. 
Now let $\pi$ be a prior probability measure on the measurable
space $(\Theta,\mathcal T)$ for which $F \mapsto  P_\theta^X(F)$
is a $\mathcal T$-measurable function of $\theta$ for each $F\in \mathcal F$.  
Then $P^X_\pi$,
defined by $P^X_\pi(F) = \int P_\theta^X(F) \, \pi(d\theta)$, is
a probability measure on $(\mathcal X,\mathcal F)$. 
The Bayes risk $R(A)$ of $A\in \mathcal A$ under $\pi$ is then
\begin{align}
R(A) &= \int R_\theta(A) \, \pi(d\theta) \\  
 &= \int \int 1( (x,y) \in A ) \, P_\pi^X(dx) \, \mu(dy),
\end{align}
and so $R$ is the product measure $P_\pi^X \times \mu$ on $(\mathcal X\times \mathcal Y, \mathcal A)$.  
Note that $P_\pi^X$ is finite and $\mu$ is $\sigma$-finite, so 
$R$ is $\sigma$-finite. 

Recall that by the regular sufficiency of $Z$, the coverage of a prediction 
region $A$ can be written as 
\begin{align}
 P_\theta(A) &=
    \int  P_z(A_z)  \,  \nu_\theta(dz),
  \label{eqn:dcover}
\end{align}
where $\nu_\theta$ is the marginal probability measure of $Z$
under $P_\theta$, defined as
$\nu_\theta(H) = P_\theta(Z^{-1}H)$ for $H\in \mathcal H$.
In other words, 
the coverage probability of a region $A$ can be 
represented as an average of conditional probabilities of the sets $\{ A_z = A\cap Z^{-1}\{z\} : \ z\in \mathcal Z\}$. 
We now show how the Bayes risk can 
be similarly represented. 
Let $\nu_R$ be the image measure of $R$ under $Z$, so that 
$\nu_R(H) = R(Z^{-1}H)$ for $H\in \mathcal H$. 
A collection of $\sigma$-finite measures
 $\{  R_z : z\in \mathcal Z  \}$ on 
$(\mathcal X\times \mathcal Y, \mathcal A)$ is a
 \emph{$(Z , \nu_R)$-disintegration} of $R$ if
\begin{enumerate}
\item $z \mapsto R_z(A)$ is measurable for each $A\in \mathcal A$;
\item $R(A) = \int R_z(A) \, \nu_R(dz)$ for each $A\in \mathcal A$.
\item $R_z( Z^{-1}\{z\}) = R_z(\mathcal X\times \mathcal Y)$ for $\nu_R$-almost all $z$;
\end{enumerate}
Additionally, 
$R_z$ is 
a probability measure for $\nu_R$-almost all $z$ if $\nu_R$ is $\sigma$-finite
\citep[Theorem 2]{chang_pollard_1997}. 
A disintegration is a generalization of a 
regular conditional probability distribution to $\sigma$-finite measures that are not necessarily probability measures, 
such as $R$ for some choices of $\mu$. 
If a $(Z, \nu_R)$-disintegration of $R$ exists, we can write 
\begin{align}
R(A)  = \int R_z(A) \, \nu_R(dz) 
      = \int  R_z(A_z) \, \nu_R(dz) 
  \label{eqn:drisk}
\end{align}
where the last equality holds 
because $R_z$ has mass only on $Z^{-1}\{z\}$ 
for almost all $z$. 

Equations \ref{eqn:dcover} and \ref{eqn:drisk} provide a means 
of constructing a $1-\alpha$ constant coverage prediction region 
that is optimal in terms of the Bayes risk $R$, 
assuming the set of regions with this coverage is not empty. 
We first describe the idea informally:
By completeness of $Z$, 
a region $A$  for which 
 $P_\theta(A) = 1-\alpha$ for all $\theta$ 
must also satisfy 
$P_z(A_z) = 1-\alpha$ for $\mathcal P$-almost all $z$.  
To construct a set $A^\pi$ that is optimal among such regions, 
simply let $A_z^\pi$ be the subset of $Z^{-1}\{z\}$ that has minimal 
$R_z$-measure among sets with $P_z$-measure equal to $1-\alpha$. 
By (\ref{eqn:drisk}), the region $A^\pi=\cup_z A^\pi_z$ will 
then have minimal $R$-measure among 
regions with $1-\alpha$ constant coverage. 

This line of reasoning can also be used to construct regions 
that lack constant coverage 
(as will typically be the case for models having discrete distributions), 
but are nevertheless Bayes-optimal in the sense that they 
minimize the Bayes risk among regions having the same coverage 
as each other, 
as a function of $\theta$. Generally speaking, a region $A^\pi$ 
will be Bayes-optimal for its own coverage function 
if for each $z$, $R_z(A_z)\geq R_z(A_z^\pi)$ 
for any $A_z$ such that $P_z(A_z) = P_z(A_z^\pi)$.  
In other words, a
subset $A_z^\pi$ of $Z^{-1}\{z\}$ is optimal if
no sets having greater or equal $P_z$-coverage have smaller $R_z$-risk. 
The form of 
such a minimizing set is given by the following variant of the 
Neyman-Pearson lemma: 
\begin{lemma}
\label{lem:nplemma}
Let $P$ and $R$ be probability measures on $(\Omega, \mathcal A)$, 
and let $\tilde A$ have the form
\begin{equation}
 \tilde A = \{ \omega: p(\omega) > k r(\omega)  \}
\end{equation}
where $k>0$ and $p$ and $r$ are densities of $P$ and $R$
with respect to a common dominating measure. 
Then 
 $R(A ) \geq R(\tilde A)$ 
for all $A\in \mathcal A$ such that $P( A) \geq P(\tilde A)$, 
with equality  only if  
 $P( A \Delta\tilde A)=0$.  
\end{lemma}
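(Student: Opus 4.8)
The plan is to run the classical Neyman--Pearson argument with the roles reversed: the measure $R$ (to be minimized) plays the part of the ``size,'' while the measure $P$ (constrained from below) plays the part of the ``power.'' Let $\lambda$ denote the common dominating measure from the statement (one may take $\lambda = P + R$), with densities $p = dP/d\lambda$ and $r = dR/d\lambda$, and write $\tilde\phi = 1_{\tilde A}$ and $\phi = 1_A$ for the indicator functions of $\tilde A = \{p > kr\}$ and of an arbitrary $A \in \mathcal A$. Note $\tilde A \in \mathcal A$ since $p,r$ are measurable.

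The one nontrivial pointwise fact is that $(\tilde\phi(\omega) - \phi(\omega))\,(p(\omega) - k r(\omega)) \ge 0$ for every $\omega$: on $\tilde A$ one has $p - kr > 0$ and $\tilde\phi - \phi = 1 - \phi \ge 0$, while on $\tilde A^{c}$ one has $p - kr \le 0$ and $\tilde\phi - \phi = -\phi \le 0$, so the product is nonnegative in either case. Integrating this inequality against $\lambda$ and using $\int \tilde\phi\, p \, d\lambda = P(\tilde A)$, $\int \phi\, p\, d\lambda = P(A)$, and the analogous identities with $r$ in place of $p$, gives $\big(P(\tilde A) - P(A)\big) - k\big(R(\tilde A) - R(A)\big) \ge 0$, that is, $k\big(R(A) - R(\tilde A)\big) \ge P(A) - P(\tilde A)$. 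Since $k > 0$ and $P(A) \ge P(\tilde A)$ by hypothesis, this yields $R(A) \ge R(\tilde A)$.

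For the equality claim, suppose $R(A) = R(\tilde A)$. Then the last inequality forces $0 \ge P(A) - P(\tilde A) \ge 0$, so $P(A) = P(\tilde A)$, and hence $\int (\tilde\phi - \phi)(p - kr)\, d\lambda = 0$. Since the integrand is nonnegative $\lambda$-everywhere, it vanishes $\lambda$-almost everywhere. On $\tilde A \setminus A$ the integrand equals $p - kr$, which is strictly positive throughout $\tilde A$; therefore $\lambda(\tilde A \setminus A) = 0$, and because $P \ll \lambda$, also $P(\tilde A \setminus A) = 0$. Combining this with $P(A) = P(\tilde A)$ gives $P(A \setminus \tilde A) = P(A) - P(A \cap \tilde A) = P(\tilde A) - P(A \cap \tilde A) = P(\tilde A \setminus A) = 0$, so $P(A \Delta \tilde A) = 0$, as claimed.

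I expect the only genuinely delicate point to be the equality analysis. The asymmetry between the two halves of the symmetric difference must be handled with care: the \emph{strict} inequality defining $\tilde A$ controls $\tilde A \setminus A$ directly through the vanishing of a positive integrand, whereas $A \setminus \tilde A$ is pinned down only afterwards, by absolute continuity of $P$ together with the constraint $P(A) = P(\tilde A)$. One should also resist asserting a converse: if $A$ differs from $\tilde A$ by a $P$-null set on which $R$ is positive, then $P(A\Delta\tilde A)=0$ yet $R(A) > R(\tilde A)$, which is exactly why the lemma states ``only if'' rather than ``if and only if.''
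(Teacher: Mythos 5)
Your proof is correct and is essentially the same Neyman--Pearson argument as the paper's: both rest on the sign of $p-kr$ on $\tilde A$ versus $\tilde A^c$, with the paper splitting the comparison over $A\cap\tilde A^c$ and $A^c\cap\tilde A$ and treating the case $P(A^c\cap\tilde A)=0$ separately, while you package the same idea as the single pointwise inequality $(\tilde\phi-\phi)(p-kr)\ge 0$. Your version handles the equality case a bit more cleanly (via $\lambda$-a.e.\ vanishing of the integrand rather than a case split), and your closing remark about why the conclusion is ``only if'' rather than ``if and only if'' is accurate.
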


Applying this lemma to $P_z$ and $R_z$ for each $z$ 
gives an $R_z$-optimal set $A_z^\pi$ for each $z$. Combining these 
gives an $R$-optimal set $A^\pi= \cup A_{z}^\pi$:
\begin{theorem} 
\label{thm:absthm}
Let $\mathcal P= \{P_\theta:\theta\in \Theta\}$ be a model
  on $\{\mathcal X\times \mathcal Y, \mathcal A\}$
with a boundedly complete regular sufficient statistic
 $Z:( \mathcal X\times \mathcal Y , \mathcal A)\rightarrow (\mathcal Z, \mathcal H)$ having marginal distributions $\{ \nu_\theta:\theta\in \Theta\}$
on $(\mathcal Z, \mathcal H)$. 
Let $R$ be a $\sigma$-finite measure on $(\mathcal X\times \mathcal Y, \mathcal A)$ with a $\sigma$-finite image measure $\nu_R$ on 
$(\mathcal Z, \mathcal H)$ and a $(Z,\nu_R)$-disintegration 
$\{ R_z:z\in \mathcal Z\}$. 
Assume that  
\begin{enumerate}
\item $\nu_\theta(H) = 0$ for all $\theta$ implies  $\nu_R(H)=0$;
\item $\nu_\theta \ll \nu_R$ for all $\theta$.
\end{enumerate}
 Let $A^\pi\in \mathcal A$ 
have the form 
\begin{equation}
A^\pi \cap Z^{-1}\{z\}\equiv A^\pi_z = \{ (x,y)\in Z^{-1}\{z\} : p_z(x,y)  > k_z r_z(x,y)   \} 
\label{eqn:bopt}
\end{equation} 
where $k_z>0$ and $p_z$ and $r_z$ are densities of $P_z$ and $R_z$ 
with respect to a common dominating measure.  
Let $A\in \mathcal A$ be such that  $P_\theta(A) = P_\theta(A^\pi)$ for all $\theta\in \Theta$. Then 
$R(A) \geq  R(A^\pi)$,
with equality only if $R(A^\pi) = \infty$ or 
 $P_\theta(A\Delta A^\pi)=0$ for all $\theta$. 
\end{theorem}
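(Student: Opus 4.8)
\section*{Proof proposal}

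The plan is to reduce the global statement to a fiberwise application of the Neyman--Pearson variant Lemma~\ref{lem:nplemma}, using bounded completeness to pass from the coverage constraint on $\Theta$ to a conditional constraint over $\mathcal Z$, and using the two domination hypotheses to cross between the relevant notions of ``almost all $z$''. First I would record the structural consequences of the hypotheses. By regular sufficiency, each $P_z$ is a probability measure, and $P_z(Z^{-1}\{z\})=1$ for $\mathcal P$-almost all $z$; since $\nu_R$ is $\sigma$-finite and a $(Z,\nu_R)$-disintegration of $R$ is assumed to exist, each $R_z$ (after modification on a $\nu_R$-null set) is likewise a probability measure carried by $Z^{-1}\{z\}$, by the cited result of \citet{chang_pollard_1997}. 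Hence $P_z(A)=P_z(A_z)$ and $R_z(A)=R_z(A_z)$ for the relevant $z$ and any $A\in\mathcal A$, and Equations~\ref{eqn:dcover} and~\ref{eqn:drisk} are available; in particular $R(A)=\int R_z(A_z)\,\nu_R(dz)$ and $R(A^\pi)=\int R_z(A^\pi_z)\,\nu_R(dz)$.

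Next I would invoke Theorem~\ref{thm:htrep} (equivalently, the bounded-completeness argument preceding it): from $P_\theta(A)=P_\theta(A^\pi)$ for all $\theta$ it follows that $P_z(A_z)=P_z(A^\pi_z)$ for $\mathcal P$-almost all $z$, that is, off a set $N\in\mathcal H$ with $\nu_\theta(N)=0$ for every $\theta$. Hypothesis~1 of the theorem upgrades this to $\nu_R(N)=0$. Intersecting $N$ with the $\nu_R$-null sets from the previous paragraph, I obtain a single $\nu_R$-null set off which, simultaneously, $P_z$ and $R_z$ are probability measures carried by $Z^{-1}\{z\}$ and $P_z(A_z)=P_z(A^\pi_z)$. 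Fix such a $z$. Taking the densities $p_z,r_z$ to vanish off $Z^{-1}\{z\}$ (harmless, since $P_z$ and $R_z$ are carried by that fiber), the set $A^\pi_z$ has precisely the form $\{p_z>k_z r_z\}$ required by Lemma~\ref{lem:nplemma}, with $k_z>0$, and $P_z(A_z)\ge P_z(A^\pi_z)$; so $R_z(A_z)\ge R_z(A^\pi_z)$. Integrating this inequality against $\nu_R$ and using the disintegration formula for $R$ yields $R(A)\ge R(A^\pi)$.

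For the equality clause, suppose $R(A)=R(A^\pi)<\infty$. Then $R_z(A_z)$ and $R_z(A^\pi_z)$ are both finite for $\nu_R$-almost all $z$, and $z\mapsto R_z(A_z)-R_z(A^\pi_z)$ is a nonnegative $\nu_R$-integrable function with vanishing integral, hence is zero $\nu_R$-almost everywhere. The equality clause of Lemma~\ref{lem:nplemma} then forces $P_z(A_z\,\Delta\,A^\pi_z)=0$ for $\nu_R$-almost all $z$, say off a set $N'$ with $\nu_R(N')=0$. Here Hypothesis~2 enters: $\nu_\theta\ll\nu_R$ gives $\nu_\theta(N')=0$ for every $\theta$, so that for all $\theta$,
\[
P_\theta(A\,\Delta\,A^\pi)=\int P_z\big((A\,\Delta\,A^\pi)\cap Z^{-1}\{z\}\big)\,\nu_\theta(dz)=\int P_z(A_z\,\Delta\,A^\pi_z)\,\nu_\theta(dz)=0,
\]
which is the asserted conclusion (the remaining case being $R(A^\pi)=\infty$).

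I expect the only real obstacle to be the bookkeeping of the three distinct null-set notions --- $\mathcal P$-null (i.e.\ $\nu_\theta$-null for every $\theta$), $\nu_R$-null, and $\nu_\theta$-null for a single $\theta$ --- and invoking Hypotheses~1 and~2 at exactly the two moments when one must move between them (Hypothesis~1 to transport the conditional-coverage identity and the regularity exceptional sets from $\mathcal P$-null to $\nu_R$-null before integrating against $\nu_R$; Hypothesis~2 to transport the symmetric-difference identity back from $\nu_R$-null to $\nu_\theta$-null). Once $P_z$ and $R_z$ are known to be genuine probability measures carried by their fibers --- which is precisely where $\sigma$-finiteness of $\nu_R$ is used --- the fiberwise appeal to Lemma~\ref{lem:nplemma} and the integration step are routine.
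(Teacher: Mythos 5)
Your proposal is correct and follows essentially the same route as the paper's proof: bounded completeness converts the equal-coverage hypothesis into $P_z(A_z)=P_z(A_z^\pi)$ off a $\mathcal P$-null set, Hypothesis~1 transports that exceptional set (together with the sets where $P_z$ or $R_z$ fails to be a probability measure on its fiber) to a $\nu_R$-null set, Lemma~\ref{lem:nplemma} is applied fiberwise and integrated against $\nu_R$, and Hypothesis~2 carries the symmetric-difference conclusion back to each $\nu_\theta$. Your slightly more explicit justification of the equality clause (a nonnegative $\nu_R$-integrable function with vanishing integral) matches the paper's argument in substance.
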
 
The prediction region $A^\pi$ is a Bayes procedure in the sense that
it minimizes a Bayes risk, but it is also frequentist in the sense that
its optimality is among procedures having the same 
frequentist coverage rate. Following \citet{yu_hoff_2018}, 
we refer to such a procedure as being ``frequentist and Bayesian'', or FAB. 

Before obtaining a less abstract  form for the optimal sets $\{ A_z^\pi:z\in \mathcal Z\}$ and the resulting FAB prediction region $A^\pi$, 
we first discuss some of the conditions of the theorem. 
The starting point 
is the existence of a disintegration 
of $R$. 
As discussed 
in \citet{chang_pollard_1997}, the existence of a disintegration 
typically 
requires some topological assumptions about the underlying spaces.
While not as general as it could be, 
the following corollary of their Theorem 1 is
sufficient for many applications. In particular, 
the conditions on $\mathcal X$, $\mathcal Y$ and $\mathcal Z$ are
met by Euclidean spaces, most countable spaces, many topological manifolds, and
products of these spaces.

\begin{corollary}[Corollary of {\citet[Theorem 1]{chang_pollard_1997}}]
\label{cor:cpcor} 
Let $\mathcal X$ and $\mathcal Y$ be complete, separable and locally compact 
metric spaces, with $\mathcal F$ and $\mathcal G$ being the 
Borel sets.  
Let $P_\pi^X$ be a probability measure on 
$(\mathcal X, \mathcal F)$, and let $\mu$ be a $\sigma$-finite Radon measure 
on $(\mathcal Y, \mathcal G)$.  
Let $\mathcal A=\mathcal F\otimes \mathcal G$ 
and define the product measure 
$R=P_\pi^X \times \mu$ on $(\mathcal X\times\mathcal Y, \mathcal A)$.  
Then $R$ is a $\sigma$-finite Radon measure. Additionally,  
let $Z:(\mathcal X\times \mathcal Y, \mathcal A)
\rightarrow (\mathcal Z, \mathcal H)$ where $\mathcal Z$ is a
separable metric space and $\mathcal H$ is the Borel $\sigma$-algebra.
If the image measure $\nu_R$ of $R$ under $Z$ 
is $\sigma$-finite, then $R$ has a $(Z,\nu_R)$-disintegration.
\end{corollary}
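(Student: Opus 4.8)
The plan is to verify that the hypotheses of \citet[Theorem 1]{chang_pollard_1997} are met by the present setup, so that the existence of a $(Z,\nu_R)$-disintegration follows immediately. Chang and Pollard's Theorem 1 asserts that a disintegration $\{R_z\}$ of a $\sigma$-finite measure $R$ exists whenever (i) $R$ is a Radon measure on a metric space, (ii) the map $Z$ into a separable metric space $(\mathcal Z,\mathcal H)$ is measurable, and (iii) the image measure $\nu_R = R\circ Z^{-1}$ is $\sigma$-finite. So the proof reduces to two checks: that $\mathcal X\times\mathcal Y$ is a metric space carrying the right Borel structure, and that $R = P_\pi^X\times\mu$ is a $\sigma$-finite Radon measure on it.

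First I would record that a finite product of complete, separable, locally compact metric spaces is again a complete, separable, locally compact metric space under (say) the sum or max metric, and that the product $\sigma$-algebra $\mathcal F\otimes\mathcal G$ coincides with the Borel $\sigma$-algebra of $\mathcal X\times\mathcal Y$ — this last point uses separability, which guarantees a countable base and hence that Borel rectangles generate all Borel sets. Next I would argue that $R=P_\pi^X\times\mu$ is a Radon measure: $P_\pi^X$ is a finite Borel measure on a complete separable metric space, hence inner regular (tight) by Ulam's theorem, and $\mu$ is $\sigma$-finite Radon by hypothesis; the product of an inner-regular finite measure with a $\sigma$-finite Radon measure is $\sigma$-finite and inner regular on its Borel $\sigma$-algebra, with local finiteness inherited from local compactness. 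Thus $R$ is a $\sigma$-finite Radon measure, establishing the first assertion of the corollary. Finally, with $R$ Radon on a suitable metric space, $Z$ Borel-measurable into a separable metric space, and $\nu_R$ assumed $\sigma$-finite, all hypotheses of \citet[Theorem 1]{chang_pollard_1997} hold, and the existence of the disintegration follows.

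The main obstacle I anticipate is purely expository rather than mathematical: stating precisely which regularity properties Chang and Pollard require of $R$ and the underlying space, and matching them to the standard measure-theoretic facts above, since different authors package ``Radon'' and ``metric space assumptions'' slightly differently. In particular one must be a little careful that $\sigma$-finiteness of $\nu_R$ is genuinely an extra assumption (it does not follow automatically from $\sigma$-finiteness of $R$, since $Z^{-1}$ of a $\sigma$-finite-in-$\mathcal Z$ decomposition need not refine a $\sigma$-finite-in-$\mathcal X\times\mathcal Y$ one), which is why it appears as a hypothesis in both Theorem 1 and this corollary; no attempt should be made to derive it. Beyond that, the argument is a routine bookkeeping exercise in combining Ulam's theorem, Tonelli's theorem for the product structure, and the cited disintegration theorem.
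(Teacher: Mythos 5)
Your proposal is correct and follows essentially the same route as the paper: verify that $\mathcal F\otimes\mathcal G$ coincides with the Borel $\sigma$-algebra of the (separable) product space, show $R=P_\pi^X\times\mu$ is a $\sigma$-finite Radon measure via Ulam's theorem for $P_\pi^X$ and the Radon hypothesis on $\mu$, and then invoke Chang--Pollard's Theorem 1. The only difference is that the paper supports the two nontrivial steps (existence of a Radon product measure on the product Borel sets, and the equality of that $\sigma$-algebra with $\mathcal F\otimes\mathcal G$) with explicit citations to Lang and Bogachev, whereas you assert them as standard facts; the mathematical content is the same.
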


We caution that
the image measure $\nu_R$ can fail to be $\sigma$-finite if 
$P_\pi^X$ is not a proper probability measure, even if 
$R$ is $\sigma$-finite. 
For example, consider the case that $\mathcal X =\mathcal Y = \mathbb R$, 
$P_\pi^X$ and $\mu$ are both Lebesgue measure, and $Z=X+Y$. 
Then $\nu_R( [a,b] ) =\infty$ for all $a<b$. However, if $P_\pi^X(\mathbb R)=1$ (or is finite) then $\nu_R$ is $\sigma$-finite.

Conditions 1 and 2 of Theorem \ref{thm:absthm}
concern the relative 
absolute continuity of $\nu_R$ and $\{ \nu_\theta: \theta\in\Theta\}$. 
Condition 1 
roughly means that $z$-values that 
are impossible under the model should not contribute to the risk 
of a prediction procedure. 
More specifically, let $N\in \mathcal H$ be such that 
$\nu_\theta(N) = 0$ for all $\theta$. 
A competitor $A$ to $A^\pi$ does not need to maintain 
$P_z(A_z) = P_z(A_z^\pi)$ for $z\in N$ in order to maintain 
$P_\theta(A) = P_\theta(A^\pi)$ for all $\theta$, and so  it could be that
$R_z(A_z) < R_z(A^\pi_z)$ for $z\in N$. Without the condition, 
if $\nu_R(N)>0$
it is possible that $R(A) < R(A^\pi)$. Conversely, with the condition we have 
$\nu_R(N)=0$, and so $R(A) \geq  R(A^\pi)$. 
Condition 2 of the theorem is not completely necessary,
 but it does imply 
that  $A^\pi$ is essentially unique, 
in that any other set with the same coverage and 
risk as $A^\pi$ can
only differ from $A^\pi$ by a set of measure zero. 

\subsection{Optimal regions for a  given level}
Recall that by Theorem \ref{thm:ccrep}, every prediction region with
constant coverage also has constant conditional coverage.
Theorem \ref{thm:absthm} then implies that
if $k_z$ in (\ref{eqn:bopt}) is chosen to yield a conditional coverage rate  of $1-\alpha$
for all $z\in \mathcal Z$,
then a Bayes-optimal $1-\alpha$ constant coverage
prediction region may be obtained. Specifically, 
if 
 $A^{\pi}_z$ is defined as in (\ref{eqn:bopt}), with
$k_z$  chosen to satisfy
  $P_z(A^\pi_z) = 1-\alpha$ for 
all $z$, then the FAB region $A^\pi$ has minimum Bayes risk
among all
 $1-\alpha$ constant
coverage prediction regions.

This result does not by itself imply that 
 $A^\pi$ is optimal among regions with 
non-constant coverage of 
 $1-\alpha$ or greater, that is, regions 
$A$ such that $P_\theta(A)\geq 1-\alpha$ for all $\theta$
with inequality for some $\theta$. 
More generally, 
consider the risk optimality of 
$A^\pi$ among prediction regions $A$ for 
which $P_\theta(A) \geq P_\theta(A^\pi)$ for all $\theta\in \Theta$, 
with inequality for some $\theta$. Intuitively we expect 
$R(A)$ to be larger than $R(A^\pi)$, since the larger 
coverage probability of $A$ should correspond to a larger volume, and hence 
a larger risk.  However, as discussed in Section 2.1, 
completeness
of $Z$ does not rule out the possibility that 
$P_z(A) < P_z(A^\pi)$ for some $z$ 
even if $P_\theta(A) \geq  P_\theta(A^\pi)$ for all $\theta$, 
and so it is possible that $R_z(A)< R_z(A^\pi)$ for 
these values of $z$.
As a result,  $R(A)$ could be smaller than $R(A^\pi)$,
depending on where the image measure $\nu_R$ on $(\mathcal Z,\mathcal H)$ places mass. 

However, 
some models $\mathcal P$ are rich enough so that 
$P_\theta(A) \geq P_\theta(A^\pi)$ for all $\theta\in \Theta$
\emph{does} imply that $P_z(A) \geq P_z(A^\pi)$ for 
 $\mathcal P$-almost all $z$, in which case 
$A^\pi$ is risk-optimal among all procedures 
with equal or greater coverage. 
Models for which this is the case are those for which 
the set of image probability measures 
$\{ \nu_\theta : \theta\in \Theta\}$ on $(\mathcal Z, \mathcal H)$ 
has elements with arbitrarily high concentration on subsets 
of $\mathcal Z$: 
\begin{lemma}
\label{lem:excomplete}
Suppose for every $\epsilon>0$ and
$\{ \nu_\theta:\theta\in \Theta\}$-non-null set $H\in \mathcal H$ there exists
a $\theta_{\epsilon}\in \Theta$ such that $\nu_{\theta_\epsilon}(H) > 1-\epsilon$.
Then if $h:(\mathcal Z,\mathcal H) \rightarrow (\mathbb R, \mathcal B(\mathbb R))$ 
is a bounded function for which
$\int h(z) \, \nu_\theta(dz) \geq 0$ for all $\theta\in \Theta$, 
then
$h(z) \geq 0$ for $\nu_\theta$-almost all $z$, for every $\theta\in \Theta$. 
\end{lemma}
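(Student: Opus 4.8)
The plan is to argue by contradiction, using the stated concentration property of $\{\nu_\theta\}$ to produce a parameter value at which the integral of $h$ is strictly negative. This is a one-sided analogue of bounded completeness, and the argument runs parallel to the completeness-type arguments already used in Section 2.3.

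First I would suppose the conclusion fails, so that $\nu_{\theta_0}(\{z : h(z) < 0\}) > 0$ for some $\theta_0 \in \Theta$. Writing $\{h < 0\} = \bigcup_{n \ge 1} \{h \le -1/n\}$ as an increasing union of sets in $\mathcal H$, continuity of the measure $\nu_{\theta_0}$ from below yields a $\delta > 0$ for which $H := \{z : h(z) \le -\delta\} \in \mathcal H$ satisfies $\nu_{\theta_0}(H) > 0$; in particular $H$ is $\{\nu_\theta\}$-non-null, so the hypothesis of the lemma applies to it. Let $M < \infty$ bound $|h|$, fix $\epsilon$ with $0 < \epsilon < \delta/(M + \delta)$, and let $\theta_\epsilon \in \Theta$ be as supplied by the hypothesis, so that $\nu_{\theta_\epsilon}(H) > 1 - \epsilon$. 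Splitting the integral over $H$ and $H^c$ and using $h \le -\delta$ on $H$ and $h \le M$ on $H^c$ gives
\[
\int h(z) \, \nu_{\theta_\epsilon}(dz) \;\le\; -\delta\, \nu_{\theta_\epsilon}(H) + M\, \nu_{\theta_\epsilon}(H^c) \;\le\; -\delta(1-\epsilon) + M\epsilon \;=\; -\delta + \epsilon(M + \delta) \;<\; 0,
\]
which contradicts $\int h(z) \, \nu_\theta(dz) \ge 0$ for all $\theta$. Hence no such $\theta_0$ exists, i.e.\ $\nu_\theta(\{h < 0\}) = 0$ for every $\theta \in \Theta$, which is the assertion.

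I do not expect a genuine obstacle here: the entire content is that the concentration hypothesis lets a single set on which $h$ is bounded away from $0$ dominate the integral. The only steps needing mild care are (i) passing from ``$h$ negative on a non-null set'' to ``$h \le -\delta$ on a non-null set'' for a fixed $\delta > 0$, handled by the continuity-of-measure step, and (ii) observing that a set of positive $\nu_{\theta_0}$-measure is automatically $\{\nu_\theta\}$-non-null, so that invoking the hypothesis on $H$ is legitimate; the choice $\epsilon < \delta/(M+\delta)$ is then the only quantitative input.
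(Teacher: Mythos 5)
Your proof is correct and is essentially the paper's own argument: both isolate a set $H=\{h\le -\delta\}$ on which $h$ is bounded away from zero, invoke the concentration hypothesis with $\epsilon<\delta/(\delta+\lVert h\rVert_\infty)$, and derive a strictly negative integral as a contradiction. The only cosmetic difference is that the paper shows every $N_\delta=\{h<-\delta\}$ is null first and then takes the increasing union, whereas you start from the contradiction hypothesis and extract a single $\delta$ by continuity from below; the content is identical.
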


From this lemma we have the following result on the risk optimality 
of $A^\pi$:
\begin{theorem} 
\label{thm:boptlevel}
Under the conditions of 
Theorem \ref{thm:absthm} and Lemma \ref{lem:excomplete}, 
if $A\in \mathcal A$ satisfies 
$P_\theta(A)\geq P_\theta(A^\pi)$ for all $\theta\in \Theta$, then 
$R(A) \geq  R(A^\pi)$, 
that is, $A^\pi$ is risk-optimal among prediction 
regions with equal or greater coverage. 
\end{theorem}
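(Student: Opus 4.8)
The plan is to reduce the statement to the fiberwise Neyman--Pearson comparison already underlying Theorem \ref{thm:absthm}, the only new ingredient being Lemma \ref{lem:excomplete}, which lets us pass from the family of inequalities $P_\theta(A)\ge P_\theta(A^\pi)$, $\theta\in\Theta$, to a single pointwise-in-$z$ inequality. Note that Lemma \ref{lem:nplemma} already concludes $R(A)\ge R(\tilde A)$ from $P(A)\ge P(\tilde A)$, so once the coverage inequality has been localized to the fibers, no change to the argument of Theorem \ref{thm:absthm} is needed.

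First I would set $h(z)=P_z(A)-P_z(A^\pi)$. This is $\mathcal H$-measurable (the measurability of $z\mapsto P_z(\cdot)$ is part of the regular sufficiency of $Z$, just as for the disintegration $z\mapsto R_z(\cdot)$) and bounded, with values in $[-1,1]$. Since $P_z$ concentrates on $Z^{-1}\{z\}$ for $\mathcal P$-almost all $z$, we have $P_z(A)=P_z(A_z)$ a.e., so by (\ref{eqn:dcover}), $\int h\,\nu_\theta(dz)=P_\theta(A)-P_\theta(A^\pi)\ge 0$ for every $\theta$. Lemma \ref{lem:excomplete} then gives $h(z)\ge 0$ for $\nu_\theta$-almost all $z$, for every $\theta$; equivalently, the set $N=\{z:P_z(A_z)<P_z(A_z^\pi)\}$ satisfies $\nu_\theta(N)=0$ for all $\theta$, and hence $\nu_R(N)=0$ by Condition 1 of Theorem \ref{thm:absthm}.

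Next I would enlarge $N$ to a still-$\nu_R$-null set $N^\ast$ off of which, additionally, $R_z$ is a probability measure with $R_z(Z^{-1}\{z\})=1$ and $P_z(Z^{-1}\{z\})=1$; such a set exists because $\nu_R$ is $\sigma$-finite and, by Condition 1, the exceptional $\{\nu_\theta\}$-null set for $P_z$ is $\nu_R$-null. For $z\notin N^\ast$, both $P_z$ and $R_z$ restrict to probability measures on the fiber $Z^{-1}\{z\}$, the set $A_z^\pi$ has the Neyman--Pearson form (\ref{eqn:bopt}) with respect to densities $p_z,r_z$, and $P_z(A_z)\ge P_z(A_z^\pi)$; Lemma \ref{lem:nplemma} applied on $Z^{-1}\{z\}$ with its trace $\sigma$-algebra then yields $R_z(A_z)\ge R_z(A_z^\pi)$. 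Integrating this inequality against $\nu_R$ and using (\ref{eqn:drisk}) for both $A$ and $A^\pi$ gives $R(A)=\int R_z(A_z)\,\nu_R(dz)\ge\int R_z(A_z^\pi)\,\nu_R(dz)=R(A^\pi)$.

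The only real obstacle is the localization step: converting the integral inequalities indexed by $\theta$ into a $\nu_R$-a.e.\ inequality in $z$. This is exactly where the high-concentration hypothesis of Lemma \ref{lem:excomplete} enters, and, as the discussion preceding the theorem explains, it cannot be dropped---without it, $\nu_R$ may place mass on $z$-values where $P_z(A)<P_z(A^\pi)$, making $R(A)<R(A^\pi)$ possible. The remaining steps are routine bookkeeping given the disintegration of $R$ supplied by Corollary \ref{cor:cpcor}: measurability of $h$ and of $z\mapsto R_z(A_z)$, disposal of the relevant $\nu_R$-null sets so that the fiberwise measures are genuine probabilities, and the interchange of integration with the fiberwise comparison. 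If one also wanted an ``equality only if'' refinement, it would transfer from the equality clause of Lemma \ref{lem:nplemma} together with Condition 2, though the present statement does not require it.
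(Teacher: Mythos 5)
Your proposal is correct and follows essentially the same route as the paper's proof: localize the coverage inequality to the fibers via Lemma \ref{lem:excomplete}, apply Lemma \ref{lem:nplemma} fiberwise to get $R_z(A_z)\geq R_z(A_z^\pi)$, transfer the exceptional set from $\{\nu_\theta\}$-null to $\nu_R$-null using Condition 1 of Theorem \ref{thm:absthm}, and integrate against $\nu_R$. The only difference is bookkeeping order (you dispose of the $\nu_R$-null sets before invoking the Neyman--Pearson comparison, the paper after), which is immaterial.
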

Models for which the conditions of the theorem hold include those for 
which the location and scale of the distribution of $Z$ can be 
set arbitrarily, such as 
multiparameter exponential families.

\subsection{Expressions for optimal regions}
We now obtain some less abstract expressions for 
the optimal sets $\{ A_z^\pi : z\in \mathcal Z\}$ given by (\ref{eqn:bopt}), in 
the case that the risk measure $R$ dominates 
the model $\{ P_\theta : \theta\in \Theta\}$.  
In this case, it will be shown that each $R_z$ dominates 
the corresponding conditional 
distribution $P_z$, and so the optimal sets given by 
(\ref{eqn:bopt}) may equivalently be expressed as 
subsets of $Z^{-1}\{z\}$ 
such that $dP_z/dR_z$ exceeds some threshold. 
Each $A_z^\pi$ may therefore be recognized as the acceptance region of a test 
of $H_z:(X,Y)\sim P_z$ versus $K_z: (X,Y)\sim R_z$
using the most powerful test statistic $dP_z/dR_z$.

Without loss of generality, 
assume $P_\pi^X$ is dominated by a $\sigma$-finite measure $\lambda$
on $(\mathcal X, \mathcal F)$, with density 
 $p_\pi^X(x) >0$ for all $x\in \mathcal X$ 
(we could take $\lambda$ to be $P_\pi^X$). 
Then 
$R \equiv P_\pi \times \mu  \ll \lambda \times \mu$ with density $p_\pi^X$.  
Since $P_\theta\ll R \ll \lambda\times \mu$ for each $\theta$, 
 by the factorization theorem
 \citep{halmos_savage_1949}, there
exist functions $g$ and $\{h_\theta:\theta\in \Theta\}$ such that
\begin{equation} p_\theta(x,y)\equiv [dP_\theta/d(\lambda\times \mu)](x,y) =  h_\theta(Z(x,y)) \times g(x,y)  .\end{equation}
Therefore, 
\begin{equation}
  dP_\theta/dR = h_\theta(Z(x,y)) \times g(x,y)/p_\pi^X(x). 
\end{equation}
The following theorem shows that the density of each $dP_z/dR_z$ 
has a similar form:
\begin{theorem} 
\label{thm:rnderiv}
Let $\lambda$ and $\mu$ be $\sigma$-finite measures 
on $(\mathcal X,\mathcal F)$ and $(\mathcal Y, \mathcal G)$ respectively, 
with product measure $\lambda\times \mu$ on $(\mathcal X\times \mathcal Y, \mathcal F\otimes \mathcal G)$. 
Let 
$R$ be a measure and $\{ P_\theta:\theta\in \Theta\}$ be a model 
on $(\mathcal X\times \mathcal Y, 
\mathcal F\otimes \mathcal G)$, the latter having 
regular sufficient statistic $Z:(\mathcal X\times \mathcal Y, \mathcal F\otimes \mathcal G)\rightarrow (\mathcal Z,\mathcal H)$. 
If
\begin{enumerate}
\item $\{ P_\theta:\theta\in \Theta\} \ll R \ll  
  \lambda\times \mu$; 
\item $dR/d(\lambda\times \mu) \equiv p_\pi^X >0$ with $\int p_\pi^X(x) \, \lambda(dx) =1$; 
\item the image measure $\nu_R$ of $R$ under $Z$ is $\sigma$-finite;
\item $R$ has $(Z,\nu_R)$-disintegration  $\{ R_z:z\in \mathcal Z\}$, 
\end{enumerate} 
then there is a common regular conditional distribution 
$\{ P_z:z\in \mathcal Z\}$ for each element of 
$\{P_\theta:\theta\in \Theta\}$ such that 
$P_z \ll R_z$ for all $z$ with density
\begin{equation}
dP_z/dR_z = c(z) \times  g(x,y)/ p_\pi^X(x), 
\label{eqn:dpdr}
 \end{equation}
where $c(z)$ is a positive function and $g(x,y)$ is defined by the 
 factorization 
$dP_\theta/d(\lambda\times \mu) = h_\theta(Z(x,y)) \times g(x,y)$. 
\end{theorem}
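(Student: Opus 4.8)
The plan is to verify the disintegration identity directly: a collection of measures $\{P_z\}$ built from the proposed density \eqref{eqn:dpdr} will be shown to satisfy the three defining properties of a $(Z,\nu_\theta)$-regular sufficient conditional distribution, and by the essential uniqueness of disintegrations this must agree with the regular conditional distribution $P_z$ posited in the hypothesis. First I would record the global Radon--Nikodym picture: by assumption 1 each $P_\theta \ll \lambda\times\mu$, so the factorization theorem of \citet{halmos_savage_1949} gives $dP_\theta/d(\lambda\times\mu) = h_\theta(Z)\,g$, and assumption 2 gives $dR/d(\lambda\times\mu) = p_\pi^X$. Hence $dP_\theta/dR = h_\theta(Z)\,g/p_\pi^X$ wherever $p_\pi^X>0$, which is everywhere. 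The candidate conditional density $g(x,y)/p_\pi^X(x)$ is thus, up to the $Z$-measurable factor $h_\theta(Z)$, the global density $dP_\theta/dR$; the normalizing function $c(z)$ will absorb the $z$-dependence coming from $h_\theta$ together with the disintegration weights.

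Next I would \emph{define} $P_z$ by $dP_z/dR_z := c(z)\,g(x,y)/p_\pi^X(x)$, where $c(z)$ is chosen so that $P_z$ is a probability measure, i.e.\ $c(z) = \big(\int g(x,y)/p_\pi^X(x)\; R_z(d(x,y))\big)^{-1}$; one must check this integral is positive and finite for $\nu_R$-almost all $z$, which follows because integrating it against $\nu_R$ recovers $R(\mathcal X\times\mathcal Y)$-type quantities that are finite by $\sigma$-finiteness and the disintegration property, so the integrand is $\nu_R$-a.e.\ finite, and positivity is immediate from $g>0$ on a set of full $P_\theta$-measure together with $P_\theta\ll R$. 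Then I verify the disintegration-style identity: for $A\in\mathcal A$,
\begin{align}
\int \Big( \int_{A} c(z)\,\tfrac{g(x,y)}{p_\pi^X(x)}\, R_z(d(x,y)) \Big)\, \nu_\theta(dz)
  &= \int_A \tfrac{g(x,y)}{p_\pi^X(x)}\, \Big(\int c(z)\, \tfrac{d\nu_\theta}{d\nu_R}(z)\, \text{[weight]}\Big)\cdots
\end{align}
— more carefully, one shows the right-hand side collapses, using property 2 of the disintegration applied to the measure with density $c(z)g/p_\pi^X$ against $R$ and the chain rule $d\nu_\theta/d\nu_R$ (which exists since one checks $\nu_\theta\ll\nu_R$: a $\nu_R$-null set pulls back to an $R$-null set, hence a $\lambda\times\mu$-null set on that fiber, hence $P_\theta$-null), to equal $P_\theta(A)$. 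Simultaneously $P_z$ concentrates on $Z^{-1}\{z\}$ because $R_z$ does, and measurability of $z\mapsto P_z(A)$ is inherited from that of $z\mapsto R_z(A)$ and the measurability of $c$. These three facts say $\{P_z\}$ is a version of the regular conditional distribution of each $P_\theta$ given $Z$; by essential uniqueness it coincides $\nu_\theta$-a.e.\ with the common $\{P_z\}$ guaranteed by regular sufficiency, and in particular $P_z\ll R_z$ with the stated density.

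The main obstacle I anticipate is the bookkeeping in the ``collapsing'' computation: one is disintegrating $R$ itself and must show that the $z$-slice of the density $g/p_\pi^X$, reweighted by $c(z)$, integrates against $\nu_\theta$ to give back $P_\theta$. The subtlety is that $\nu_\theta \ne \nu_R$ in general, so a Radon--Nikodym factor $d\nu_\theta/d\nu_R$ enters, and one must confirm that this factor times $c(z)^{-1}$ equals precisely the $Z$-measurable piece $h_\theta(Z)\cdot(\text{const})$ appearing in the global density $dP_\theta/dR$ — i.e.\ that $h_\theta(z) = c(z)^{-1}\,(d\nu_\theta/d\nu_R)(z)$ up to the a.e.\ ambiguities. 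This is really an application of the abstract fact that disintegrating $P_\theta$ is the same as disintegrating $R$ and then tilting fiberwise by $dP_\theta/dR$ restricted to the fiber; making that rigorous requires invoking a "composition of disintegrations" lemma (available, e.g., from the structure in \citet{chang_pollard_1997}) and being careful that all the exceptional null sets are handled uniformly via conditions 1 and 2 of Theorem \ref{thm:absthm}. Once that identification is in hand, the formula \eqref{eqn:dpdr} follows, with $c(z)$ identified explicitly as the fiberwise normalizing constant.
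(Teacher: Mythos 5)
Your proposal follows essentially the same route as the paper: factor $dP_\theta/dR = h_\theta(Z)\,g/p_\pi^X$, disintegrate $R$, tilt each fiber measure $R_z$ by $g/p_\pi^X$ to get an (unnormalized) measure $Q_z$, identify $d\nu_\theta/d\nu_R$ with $h_\theta(z)\,Q_z(\mathcal X\times\mathcal Y)$, and normalize to obtain $P_z$ with $c(z)=Q_z(\mathcal X\times\mathcal Y)^{-1}$. The fiber-concentration and measurability checks you list are exactly the ones the paper carries out (the latter via monotone limits of simple functions), and the ``composition of disintegrations'' bookkeeping you flag as the main obstacle is precisely the computation the paper does explicitly; the appeal to essential uniqueness at the end is unnecessary, since the theorem only asserts existence of a regular conditional distribution with the stated density.

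One sub-argument as written would fail: you justify $\nu_R$-a.e.\ finiteness of the normalizing integral $\int g/p_\pi^X\,dR_z$ by saying that integrating it against $\nu_R$ recovers an ``$R(\mathcal X\times\mathcal Y)$-type quantity that is finite by $\sigma$-finiteness.'' That integral equals $\int g\,d(\lambda\times\mu)$, which need not be finite ($\sigma$-finiteness of $R$ does not bound it, and only $\int h_\theta(Z)\,g\,d(\lambda\times\mu)=1$ is guaranteed). The correct argument, which the paper uses, is to integrate $h_\theta(z)\,Q_z(\mathcal X\times\mathcal Y)$ against $\nu_R$ and observe that this equals $P_\theta(\mathcal X\times\mathcal Y)=1<\infty$, so $Q_z(\mathcal X\times\mathcal Y)<\infty$ for $\nu_R$-almost all $z$. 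This is a local fix rather than a change of strategy, but as stated your finiteness step is not valid.
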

These results are related to 
Theorem 3 of \citet{chang_pollard_1997}, which describes 
how the disintegration of one measure can be related to the disintegration 
of another measure that dominates it. 
We note that 
completeness is not used in this lemma, and that 
the 
existence of a common regular conditional distribution for the model 
is simply the definition of $Z$ being a regular sufficient statistic. 
What the lemma provides is the existence of a conditional distribution 
that is dominated by the disintegration of the risk measure, and 
an expression for the corresponding densities $\{ dP_z/dR_z: z\in 
\mathcal Z\}$. 
This expression provides the following 
version of Theorem \ref{thm:absthm}, written in terms of the 
somewhat familiar quantities $g(x,y)$ and $p_\pi^X(x)$:
\begin{corollary}  
\label{cor:nfrep}
Under the conditions of 
Theorem \ref{thm:absthm} and 
Theorem \ref{thm:rnderiv}, 
let $A^\pi \in \mathcal F\otimes \mathcal G$ satisfy
\begin{equation}
A^\pi_z = \{ (x,y)\in Z^{-1}\{z\} : g(x,y)/p_\pi^X(x)  > k_z   \}. 
\label{eqn:rnbopt}
\end{equation} 
Let $A\in \mathcal A$ be such that  $P_\theta(A) = P_\theta(A^\pi)$ for all $\theta\in \Theta$. Then 
$R(A) \geq  R(A^\pi)$,
with equality only if 
 $P_\theta(A\Delta A^\pi)=0$ for all $\theta$. 
\end{corollary}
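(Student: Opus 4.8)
The plan is to reduce Corollary \ref{cor:nfrep} to Theorem \ref{thm:absthm} by showing that the set defined by (\ref{eqn:rnbopt}) is a special case of the set defined by (\ref{eqn:bopt}), and that the additional hypotheses of Theorem \ref{thm:rnderiv} guarantee that Theorem \ref{thm:absthm} is applicable with no loss in the equality clause. First I would invoke Theorem \ref{thm:rnderiv}: under conditions 1--4 (which are exactly the hypotheses being assumed here), there is a common regular conditional distribution $\{P_z:z\in\mathcal Z\}$ for the model with $P_z\ll R_z$ for all $z$, and the Radon--Nikodym derivative is $dP_z/dR_z = c(z)\, g(x,y)/p_\pi^X(x)$ with $c(z)>0$. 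Since $P_z\ll R_z$, we may take $R_z$ itself as the common dominating measure in Theorem \ref{thm:absthm}, so that $r_z\equiv 1$ and $p_z \equiv dP_z/dR_z$. Then the set in (\ref{eqn:bopt}), namely $\{(x,y)\in Z^{-1}\{z\}: p_z(x,y) > k_z r_z(x,y)\} = \{(x,y)\in Z^{-1}\{z\}: c(z)\,g(x,y)/p_\pi^X(x) > k_z\}$, is identical to the set in (\ref{eqn:rnbopt}) after absorbing the positive constant $c(z)$ into the threshold (i.e. replacing $k_z$ by $k_z/c(z)$, which is again an arbitrary positive constant since $c(z)>0$). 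Thus $A^\pi$ as defined here is of the form required by Theorem \ref{thm:absthm}.

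Next I would check that the remaining hypotheses of Theorem \ref{thm:absthm} are in force. Conditions 1 and 2 of that theorem ($\nu_\theta(H)=0$ for all $\theta$ implies $\nu_R(H)=0$, and $\nu_\theta\ll\nu_R$) are assumed by fiat in the statement ("under the conditions of Theorem \ref{thm:absthm}"), as is the existence of the disintegration $\{R_z:z\in\mathcal Z\}$ and the $\sigma$-finiteness of $\nu_R$. The measurability of $A^\pi$ and the measurability of $z\mapsto k_z$ follow as in Theorem \ref{thm:absthm} once $g$, $p_\pi^X$ and $c$ are recognized as measurable functions (of $(x,y)$, of $x$, and of $z$ respectively), which they are by construction in the factorization theorem and in Theorem \ref{thm:rnderiv}. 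Applying Theorem \ref{thm:absthm} then yields immediately that for any $A\in\mathcal A$ with $P_\theta(A)=P_\theta(A^\pi)$ for all $\theta$, we have $R(A)\geq R(A^\pi)$, with equality only if $R(A^\pi)=\infty$ or $P_\theta(A\Delta A^\pi)=0$ for all $\theta$.

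The one extra point to nail down — and the place where the corollary's equality clause is sharper than that of Theorem \ref{thm:absthm} — is ruling out the case $R(A^\pi)=\infty$. Here I would use hypothesis 2 of Theorem \ref{thm:rnderiv}, that $dR/d(\lambda\times\mu) = p_\pi^X$ with $\int p_\pi^X\,d\lambda = 1$, which makes $R = P_\pi^X\times\mu$ a product of a \emph{probability} measure with the $\sigma$-finite measure $\mu$. Then $R(A^\pi) = \int \mu(A^\pi_x)\, P_\pi^X(dx)$, and since $A^\pi_x\subset\mathcal Y$ has $P_z$-conditional probability (hence bounded) structure, one argues that $\mu(A^\pi_x)$ is finite $P_\pi^X$-a.e. — in fact the construction forces $A^\pi_z$ to have finite $R_z$-measure (it is contained in the region where a density exceeds a positive threshold against a probability measure $P_z$, so $R_z(A^\pi_z)\le P_z(A^\pi_z)/k_z \le 1/k_z < \infty$), and integrating these finite values against the probability measure $\nu_R$ (which is a probability measure here because $R$ is finite on fibers in the relevant sense) gives $R(A^\pi)<\infty$. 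I expect this finiteness bookkeeping to be the main obstacle: one must be careful that $k_z>0$ uniformly enough, or at least $\nu_R$-integrably enough, that the bound $R_z(A^\pi_z)\le 1/k_z$ integrates to something finite; the cleanest route is to observe $R_z(A^\pi_z) \le P_z(A^\pi_z)/k_z \le 1/k_z$ pointwise and then note that in the constant-coverage application $k_z$ is chosen so $P_z(A^\pi_z)=1-\alpha$, and more generally that $R(A^\pi) = \int R_z(A^\pi_z)\,\nu_R(dz) \le \int P_z(A^\pi_z)/k_z\,\nu_R(dz)$, which is finite whenever the thresholds are chosen measurably and are bounded below on sets of positive $\nu_R$-measure. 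With $R(A^\pi)<\infty$ established, the equality clause of Theorem \ref{thm:absthm} collapses to exactly the statement "$P_\theta(A\Delta A^\pi)=0$ for all $\theta$", completing the proof.
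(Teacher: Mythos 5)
Your main reduction is exactly the paper's proof: the paper's entire argument for Corollary \ref{cor:nfrep} is to substitute the expression $dP_z/dR_z = c(z)\,g(x,y)/p_\pi^X(x)$ from Theorem \ref{thm:rnderiv} into the form (\ref{eqn:bopt}) of Theorem \ref{thm:absthm}, taking $R_z$ as the common dominating measure so that $r_z\equiv 1$ and $p_z=dP_z/dR_z$, and absorbing the positive factor $c(z)$ into the threshold. Your first two paragraphs carry this out correctly, and in more detail than the paper does.

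Where you go beyond the paper is the equality clause, and you are right that this is the delicate point: Corollary \ref{cor:nfrep} drops the escape case ``$R(A^\pi)=\infty$'' that appears in Theorem \ref{thm:absthm}, and the paper's one-sentence proof does not say why that is permissible. Your proposed fix, however, does not close this gap. The pointwise bound $R_z(A^\pi_z)\le P_z(A^\pi_z)/k_z\le 1/k_z$ gives $R(A^\pi)=\int R_z(A^\pi_z)\,\nu_R(dz)<\infty$ only if $1/k_z$ is $\nu_R$-integrable, and nothing in the stated hypotheses forces this: $\nu_R$ is merely $\sigma$-finite (indeed $\nu_R(\mathcal Z)=\mu(\mathcal Y)$ may be infinite), and the $k_z$ are arbitrary positive constants that may tend to zero. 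So the strengthened equality clause remains unproved as stated; this is a defect (or an implicit finiteness assumption) in the corollary itself rather than in your reduction, but your patch as written does not repair it, and you should either retain the ``$R(A^\pi)=\infty$'' caveat or add an explicit hypothesis such as $\int k_z^{-1}\,\nu_R(dz)<\infty$ (or simply $R(A^\pi)<\infty$).
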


Practical details concerning the constructing of 
the FAB region $A^\pi$ 
for a few specific models
are described in Section 4, but we make some comments here. 
Having observed $X=x$, 
construction of
a $1-\alpha$ FAB prediction region 
amounts to
determining the values $y\in \mathcal Y$ for which 
$g(x,y)/p_\pi^X(x)$ meets or exceeds the $\alpha$ quantile
of $g(X,Y)/p_\pi^X(X)$ under $P_z$, 
the conditional distribution of $(X,Y)$
given $Z=z$, where $z=Z(x,y)$. 
Thus the region is constructed by inverting tests
of $(X,Y) \sim P_z$ using the test statistic
$t_\pi(x,y) = g(x,y)/p_\pi^X(x)$, and so we refer to 
$t_\pi(x,y)$ as being a Bayes-optimal prediction statistic 
under the prior distribution $\pi$. 
In some problems it will be more convenient or familiar to work with 
an alternative statistic that corresponds to the same Bayes-optimal tests. 
For example, any statistic that is a strictly increasing function 
of $t_\pi(x,y)$ is also a Bayes-optimal prediction statistic, 
as is any statistic that can be expressed as 
$h(z) \times t_\pi(x,y)$ for some positive function $h(z)$. 
This implies, for example, that 
$p_{\theta_0}(x,y)/p_\pi^X(x)$ 
is a Bayes-optimal prediction statistic
for any choice of $\theta_0\in \Theta$.

Construction of the FAB prediction region $A^\pi$ 
may be simplified in the case that 
the function 
$Z_x:\mathcal Y\rightarrow \mathcal Z$ defined by  $Z_x(y)= Z(x,y)$
is injective. 
As described in Corollary \ref{cor:accF}, if
$Z_x$ is injective for each $x$ then any prediction region $A$
can be expressed as
$A = \{ (x,y) : x\in B_{Z(x,y)} \}$ for some $B:\mathcal Z\rightarrow 2^\mathcal X$, and therefore derived by inverting 
tests of $H_z:X\sim P_z^X$. 
In some cases, the optimal tests 
are equivalent to most powerful tests of $H_z:X\sim P_z^X$
 versus $K_z:X\sim P_\pi^X$, and so 
are those that 
accept $H_z$ for large values of $p^X_z(x)/p^X_\pi(x)$,
where $p^X_z$ is the density of the conditional distribution of $X$ given 
$Z=z$. We first illustrate this with an example, and then give some general results.

\begin{example}
\label{ex:gaussianOR}
Consider again the model in Example \ref{ex:simplenormal} where
$X\sim N(\theta,k\sigma^2)$ and $Y\sim N(\theta, \sigma^2)$ are
independent, with $\sigma^2$ known and $\theta\in \mathbb R$ unknown. 
Taking the volume measure $\mu$ to be Lebesgue measure on $\mathbb R$, 
$t_\pi(x,y)$ can be written as
\begin{align}
t_\pi(x,y) & =
  \exp(  -\tfrac{1}{2\sigma^2} [x^2/k + y^2] )/p_\pi^X(x). 
\end{align}
Further simplification is possible in this
case. 
For $z=(x+yk)/(1+k)$, 
$t_\pi(x,y)$ can be written
as
\begin{equation} 
t_\pi(x,y) = h(z) \exp(  -\tfrac{k+1}{2k^2\sigma^2} (x -z)^2 )/p_\pi^X(x), 
\end{equation}
which has a numerator that is proportional to the density of 
the $N( z , \sigma^2 k^2/(k+1) )$ distribution - the  distribution of $X$ given $Z=z$.  
Therefore, having observed $X=x$, the 
Bayes-optimal 
$1-\alpha$ constant coverage prediction region includes $y$ if
$p_z^X(x)/p_\pi^X(x)$ exceeds its $\alpha$ quantile
under $X\sim P_z$, where $z=Z(x,y)$ and $p_z^X$ is the conditional density of $X$
given $Z=z$. In other words, a Bayes-optimal prediction region 
with constant $1-\alpha$ coverage
is
$A^\pi= \{ (x,y): x\in B^\pi_{Z(x,y)} \}$ where  $B^\pi_z$ can be expressed as
\begin{equation}
 B^\pi_z = \{ x: p_z^X(x)/p_\pi^X(x) > k_z \},
\end{equation}
with $k_z$ being the $\alpha$ quantile of $p^X_z(X)/p_\pi^X(X)$
under $X\sim N(z, \sigma^2 k^2/(k+1) )$.
Thus $B^\pi_z$ is the acceptance region of the
most powerful level-$\alpha$ test of $H_z: X \sim P^X_z$ versus
$K: X\sim P^X_\pi$.
A reasonably simple formula for the prediction region
under a conjugate prior distribution for $\pi$ is given in the next section. 
\end{example}

A similar result holds for 
other models where $Z_x$ is injective. 
For example, 
if the distributions $\{P_\theta:\theta\in \Theta\}$ are all 
dominated by counting measure with densities $p_\theta(x,y) = h_\theta(Z(x,y)) g(x,y)$, then the conditional density of $X$ given $Z=z$ is 
\begin{align}
p_z^X(x)&  =  \frac{ \sum_{y':Z(x,y')=z} h_\theta(Z(x,y')) g(x,y') }
             { \sum_{(x',y'):Z(x',y')=z} h_\theta(Z(x',y')) g(x',y')  } \\
 &=  \sum_{y':Z(x,y')=z} g(x,y')/c_z  \\
 & =  g(x,Z_x^{-1}(z))/c_z,
\end{align}
where the last equality holds by the injectivity of $Z_x$. This means 
that, on $\{ (x,y): Z(x,y)=z\} $, we have $g(x,y) =  c_z p_z^X(x)$. 
By plugging this into 
(\ref{eqn:rnbopt}) we may express the FAB prediction region 
as 
$A^\pi =\{ (x,y) : x \in B^\pi_{Z(x,y)} \}$, where 
\begin{equation}
 B^\pi_z = \{ x: p_z^X(x)/p_\pi^X(x) > k_z\}.   
\label{eqn:dibopt}
\end{equation} 
Alternatively, if $\{P_\theta:\theta\in \Theta\}$ 
has densities with respect to Lebesgue measure and $Z(x,y)$ is 
differentiable, then via the usual change of variables 
formula the density of $(X,Z)$ under $P_\theta$ is
\begin{align} 
 p_\theta^{X,Z}(x,z)  &=  p_\theta(x,Z^{-1}_x(z) )/ J(x,z) \\
 &=    h_\theta(z) \times  g(x,Z^{-1}_x(z) )   / J(x,z), 
\end{align}
where $J(x,z)$  is $| dZ(x,y)/dy |$ evaluated at $(x,Z_x^{-1})$, and so
 the conditional density of $X$ given $Z=z$  is
\begin{equation} p_z^X(x) = c_z \times g(x,Z_x^{-1}(z))/ J(x,z). \end{equation}
Therefore, on $Z(x,y)=z$, 
the Bayes-optimal test statistic (\ref{eqn:dpdr})
is equal to $J(x,z) \times p_z^X(x)/p^X_\pi(x)$, 
and the FAB region  may be expressed as 
$A^\pi = \{ (x,y) : x \in B^\pi_{Z(x,y)} \}$, where 
\begin{equation}
 B^\pi_{z} = \{ x : J(x,z) \times p^X_z(x)/p^X_\pi(x) > \tilde k_z\}.  
\label{eqn:cibopt} 
\end{equation}
If $J(x,z)$ is constant in $x$, then $B_z^\pi$ can be written  
$B^\pi_{z} = \{ x : p^X_z(x)/p^X_\pi(x) > k_z\}$, just as with discrete 
models. 
This will be the case if $Z$ has the form
$Z(x,y) = s^X(x) +s^Y(y)$, as with 
many exponential family models for independent $X$ and $Y$. 

To summarize, 
when $Z_x$ is injective the optimal test statistic (\ref{eqn:dpdr})
may be written as a function of $x$ and $z$, and so the optimal 
acceptance regions may be written as acceptance regions of 
tests of $H_z:X\sim P_z^X$. 
The inclusion of a value $y$ into a prediction region 
based on observing $X=x$ can be
determined as follows:
\begin{enumerate}
\item Set $z=Z(x,y)$. 
\item Test $H_z:X\sim P_z^X$ based on observing $X=x$.
\item Include $y$ in the region if $H_z$ is accepted, otherwise 
exclude $y$ from the region. 
\end{enumerate}
The coverage of such a region as a function of $\theta$ is 
$1-\Exp{\theta }{a(Z)}$, where $a(z)$ is the size of the test 
in Step 2. The region will be Bayes-optimal for its coverage function 
if the statistic used in Step 2 
is one that is  equivalent to (\ref{eqn:dpdr}), which 
in some cases includes
$p^X_z(x)/p^X_\pi(x)$ or $J(x,z)\times p^X_z(x)/p^X_\pi(x)$.

\section{Examples}
\subsection{Nonparametric prediction} 
Conformal prediction \citep{gammerman_vovk_vapnik_1998,shafer_vovk_2008} 
is a method of constructing nonparametric prediction 
regions 
for a random object $Y_{n+1}$ 
based on $X=( Y_1,\ldots, Y_n)$, 
in models for which $Y_1,\ldots, Y_{n+1}$ are exchangeable. 
A generic conformal prediction region is constructed as follows:
Let $c:\mathcal Y^{n+1}\rightarrow \mathbb R$ be
a function 
that is invariant to permutations of its first $n$ elements. 
Typically, 
$c(y_1,\ldots,y_{n+1})$ is chosen to be some 
numerical measure of ``conformity'' between $y_{n+1}$ 
and the (multi)set $\{y_1,\ldots, y_n\}$.
Having observed $(Y_1,\ldots, Y_n ) = (y_1,\ldots, y_n)$, 
a value $y_{n+1}$ is included in the
prediction region if the conformity between $y_{n+1}$ and 
$\{ y_1,\ldots, y_n\}$ is comparable to 
the conformity between $y_i$ and $\{ y_1,\ldots,y_{i-1},y_{n+1},y_{i+1},
\ldots, y_n\}$ for some minimal fraction of indices $i\in \{1,\ldots, n\}$.  
Specifically, a value $y_{n+1}$ is included 
in the 
$1-\alpha$ 
conformal prediction region if $c_{n+1}$ is greater than 
the $\alpha$ sample quantile of 
$\{c_1,\ldots,  c_{n+1}\}$, where 
$ c_i=c( y_1,\ldots, y_{i-1},y_{n+1},y_{i+1},\ldots, y_n, y_i)$. 
By exchangeability, the coverage probability of the resulting prediction 
region is greater than or equal to $1-\alpha$. 

Conformal prediction is 
a special case of Faulkenberry's method, 
as applied to 
exchangeable models. 
For notational simplicity, we consider the i.i.d.\ case where 
 $Y_1,\ldots, Y_{n+1}  \sim$ i.i.d.\ $P\in \mathcal P$
 where $\mathcal P$ is a model on $(\mathcal Y,\mathcal G)$, 
so that the $(n+1)$-fold product measure 
$P^{n+1}\equiv\times_{1}^{n+1} P$ is 
the joint distribution of $(Y_1,\ldots, Y_{n+1})$. 
Then 
the multiset  $\{Y_1,\ldots, Y_{n+1}\}$ 
of unordered observed values  (the ``order statistics'')
is a sufficient statistic. 
Faulkenberry's prediction method in this case is 
that, having observed $(Y_1,\ldots, Y_n)=(y_1,\ldots, y_n)$, 
a value $y_{n+1}$ is included in the prediction region 
if 
$y_{n+1}$
is in the acceptance region of a level-$\alpha$ 
test of $Y_{n+1}$ being uniformly distributed on 
$\{ y_1,\ldots,y_{n+1}\}$, 
as this is the conditional 
distribution of $Y_{n+1}$ given 
$\{ Y_1,\ldots,Y_{n+1} \} = \{y_1,\ldots, y_{n+1}\}$, for 
any member $P$ of $\mathcal P$. 
A  test that accepts values $y_{n+1}$ for which 
$c_{n+1}$ is greater than the $\alpha$ sample quantile of
$\{c_1,\ldots,  c_{n+1}\}$, 
is equivalent to the conformal prediction procedure.

For many models the unordered values $\{Y_1,\ldots, Y_{n+1}\}$
are not only a sufficient statistic, but a complete sufficient statistic
\citep{bell_blackwell_breiman_1960,mattner_1996}. 
In particular, this will be the case 
if $\mathcal P$ is the space of probability measures 
dominated by a common measure $\mu$, where 
 $\mu$ is non-atomic (e.g.\ Lebesgue measure) or $\mathcal Y$ is  countable.
In
these cases, the class of all prediction regions with a
given coverage function may be characterized in terms of
conditional coverage given the order statistics, and under some regularity conditions a Bayes-optimal FAB prediction region
may be found. 
We first derive an expression for the region 
assuming that the conditions of Corollary \ref{cor:nfrep} 
are met, and then we discuss 
the conditions.  

Recall that Corollary \ref{cor:nfrep} 
says that a value $y$ is accepted into the FAB prediction region 
if $g(x,y)/p_\pi^X(x)$ is large compared to its conditional 
distribution given the sufficient statistic. 
In the i.i.d.\ sampling case being considered here 
where $X=(Y_1,\ldots, Y_n)$ and 
$Y=Y_{n+1}$,
the function $g(x,y)$ from the factorization theorem is 
constant, and so a risk-optimal prediction region is one that
accepts a value $y_{n+1}$
if $p_\pi(y_1,\ldots, y_n)$ is small compared to the conditional 
distribution 
of $p_\pi(Y_1,\ldots, Y_n)$ given $\{Y_1,\ldots, Y_{n+1}\} = 
\{ y_1,\ldots, y_{n+1}\}$, 
where $p_\pi(y_1,\ldots, y_n)$ is 
the prior predictive density
of $(Y_1,\ldots, Y_n)$ under the prior distribution $\pi$ on $\mathcal P$.  
Recall from the discussion in Section 3.3 that an 
equivalent criterion is to accept $y_{n+1}$ if 
$h(z)/p_{\pi}(y_1,\ldots,y_n)$ is large, where $h(z)$ is any function 
of the the sufficient statistic $z=\{ y_1,\ldots, y_{n+1}\}$. One such 
function is 
the prior predictive density of $Y_1,\ldots, Y_{n+1}$, denoted by 
$p_\pi(y_1,\ldots, y_{n+1})$, which is constant on
$\{Y_1,\ldots,Y_{n+1} \} = \{ y_1,\ldots, y_{n+1}\}$ 
because prior predictive distributions under i.i.d.\ sampling 
are exchangeable. Therefore, 
the FAB prediction region is one that accepts 
values $y_{n+1}$ for which the posterior predictive density $p_\pi(y_{n+1} | y_1,\ldots, y_n) = p_\pi(y_1,\ldots, y_{n+1} ) / p_\pi(y_1,\ldots,y_n)$ is large. 

More concretely, construction of a risk-optimal $1-\alpha$ prediction region 
proceeds as follows:
The conditional distribution of 
$p_\pi(Y_{n+1} | Y_1,\ldots, Y_n)$ given 
$\{ Y_1,\ldots, Y_{n+1}\}
= \{y_1,\ldots, y_{n+1}\}$
has mass 
$1/(n+1)$ on each of 
the values  $c_1,\ldots, c_{n+1}$, where
$c_i = p_\pi( y_{i} | y_1,\ldots, y_{i-1},y_{n+1},y_{i+1},\ldots, y_n)$
for $i=1,\ldots,n$  and $c_{n+1} = p_\pi(y_{n+1} | y_1,\ldots, y_n)$. 
If $\alpha = k/(n+1)$
for some integer $k$ between zero and $n+1$
and there are no ties, 
then $y_{n+1}$ is accepted into the 
$1-\alpha$ FAB prediction region if 
$c_{n+1}$ 
is greater than the $k$th order statistic of $c_1,\ldots, c_{n+1}$. 
This is 
equivalent to 
 implementing the conformal prediction procedure using 
the posterior predictive density $p_\pi(y_{n+1}| y_1,\ldots, y_n)$ 
as the conformity function. 

We now provide some conditions under which 
the procedure described 
above provides a risk-optimal prediction region. 
In doing so, we make use of the fact that the sufficient statistic may be equivalently be expressed as the empirical distribution $Z$ 
of $Y_1,\ldots, Y_{n+1}$, so $[Z(y_1,\ldots, y_{n+1})](G) \equiv \sum_{i=1}^{n+1} 1(y_i\in G)/(n+1)$ for $G\in \mathcal G$. 
\begin{theorem}
\label{thm:nppred} 
Let $\mu$ be a $\sigma$-finite Radon measure on 
$(\mathcal Y, \mathcal G)$, where $\mathcal Y$ 
is a complete, separable and locally compact metric space
and $\mathcal G$ is the Borel $\sigma$-algebra. 
Let  $(\mathcal P,\mathcal \sigma(w) )$ be the 
measurable space of 
probability measures 
    on $(\mathcal Y, \mathcal G)$ that are dominated by $\mu$
  with $\sigma(w)$ being the Borel $\sigma$-algebra under the weak topology. 
Let $\pi$ be a probability measure over $(\mathcal P,\sigma(w))$ 
and let $P_\pi^X$ be the probability measure 
on the $n$-fold product space 
$(\mathcal Y^n, \mathcal G^n)$ of $(\mathcal Y, \mathcal G)$, 
defined by $P_\pi^X(G_1\times \cdots \times G_n) = 
 \int \prod_{i=1}^n P(G_i) \, \pi(dP)$. 
Then 
\begin{enumerate}
\item $R=P_\pi^X\times \mu$ is a $\sigma$-finite Radon measure; 
\item the image measure $\nu_R$ of $R$ under $Z$ is $\sigma$-finite; 
\item $R$ has a $(Z,\nu_R)$-disintegration. 
\end{enumerate} 
If additionally 
\begin{enumerate}
\item[A1.] $\mu$ is non-atomic, or $\mathcal Y$ is discrete and $\mu$ is  
counting measure, and 
\item[A2.] $P^X_\pi$ and  $\times_{i=1}^n \mu$ 
are mutually absolutely continuous, 
\end{enumerate}
then 
$Z$ is a complete regular  sufficient statistic and 
a prediction region given by 
\begin{equation}
A^\pi_{(y_1,\ldots, y_n)}  = 
\{  y_{n+1} : p_\pi(y_{n+1}| y_1,\ldots, y_n ) > k_{Z(y_1,\ldots, y_{n+1})} \}.
\end{equation}
is
risk-optimal 
in that if 
$P^{n+1}(A) = P^{n+1}(A^\pi)$ for all $P\in \mathcal P$ 
then $R(A) \geq R(A^\pi)$, with equality only if 
$P^{n+1}( A\Delta A^\pi) = 0 $ for all $P\in \mathcal P$. 
\end{theorem}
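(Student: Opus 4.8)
The plan is to establish the three structural conclusions in turn --- $R$ is a $\sigma$-finite Radon measure, $\nu_R$ is $\sigma$-finite, $R$ has a $(Z,\nu_R)$-disintegration --- and then, under A1 and A2, to verify the hypotheses of Corollary~\ref{cor:nfrep} and read off the stated form of the optimal region.

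For the structural part: a finite product of complete, separable, locally compact metric spaces is again one, so $\mathcal Y^n$ qualifies; $P_\pi^X$ is a genuine probability measure on $(\mathcal Y^n,\mathcal G^n)$ (measurability of $P\mapsto P^n(G)$ reduces to rectangles by a monotone-class argument, and countable additivity passes through the integral), and a finite Borel measure on a Polish space is Radon. Taking $\mathcal X=\mathcal Y^n$ and target space $\mathcal Y$ in Corollary~\ref{cor:cpcor} then gives at once that $R=P_\pi^X\times\mu$ is a $\sigma$-finite Radon measure, and --- once $\nu_R$ is shown $\sigma$-finite --- that a $(Z,\nu_R)$-disintegration exists, since $Z$ takes values in the space of Borel probability measures on $\mathcal Y$ under the weak topology, which is Polish (hence separable metric) by Prokhorov's theorem with its Borel $\sigma$-algebra. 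The one genuinely new point --- and, as the caution following Corollary~\ref{cor:cpcor} shows, the place where properness of $P_\pi^X$ and the hypotheses on $\mu$ matter --- is $\sigma$-finiteness of $\nu_R$. I would argue it by disjointifying $\mu$ into measurable pieces $\{B_j\}$ with $\mu(B_j)<\infty$: the map carrying $(y_1,\dots,y_{n+1})$ to the multiset of indices $j$ with $y_i\in B_j$ is measurable and permutation-invariant, hence factors measurably through $Z$ (the empirical measure recovers each count $(n+1)[Z](B_j)$), giving a countable measurable partition $\{H_{\mathbf j}\}$ of the range of $Z$. For a fixed index multiset $\mathbf j=\{j_1,\dots,j_{n+1}\}$ the set $Z^{-1}(H_{\mathbf j})$ lies in the finite union $\bigcup_\sigma B_{j_{\sigma(1)}}\times\cdots\times B_{j_{\sigma(n+1)}}$ over orderings, each block having $R$-measure at most $P_\pi^X(\mathcal Y^n)\,\mu(B_{j_{\sigma(n+1)}})=\mu(B_{j_{\sigma(n+1)}})<\infty$; hence $\nu_R(H_{\mathbf j})<\infty$ and $\nu_R$ is $\sigma$-finite.

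Under A1 and A2: sufficiency of the empirical measure is classical --- the conditional law of $(Y_1,\dots,Y_{n+1})$ given the multiset is uniform over the orderings, free of $P$, and this is a regular version (with $P_z(Z^{-1}\{z\})=1$) because $\mathcal Y^{n+1}$ is Polish --- and bounded completeness under A1 is the result of \citet{bell_blackwell_breiman_1960} for non-atomic $\mu$ and its discrete counterpart for $\mathcal Y$ countable with counting measure (see also \citet{mattner_1996}). For the remaining hypotheses take $\lambda=\times_{i=1}^n\mu$: with respect to $\times_{i=1}^{n+1}\mu$, $R$ has density $(y_1,\dots,y_{n+1})\mapsto p_\pi^X(y_1,\dots,y_n)$ and each $P^{n+1}$ has density $\prod_{i=1}^{n+1}(dP/d\mu)(y_i)$, so by A2 we get $\{P^{n+1}:P\in\mathcal P\}\ll R\ll\lambda\times\mu$, with $p_\pi^X$ (redefined to equal $1$ on the $\lambda$-null set where it vanishes, which does not change $R$) strictly positive and integrating to $1$; these are the hypotheses of Theorem~\ref{thm:rnderiv}. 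Pushing $P^{n+1}\ll R$ forward under $Z$ gives $\nu_P\ll\nu_R$ for every $P$, which is conditions~1 and~2 of Theorem~\ref{thm:absthm}. Hence Corollary~\ref{cor:nfrep} applies.

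It remains to identify the region, and here the main obstacle has already been met: Corollary~\ref{cor:nfrep} makes the set $A^\pi$ with $A^\pi_z=\{(x,y)\in Z^{-1}\{z\}:g(x,y)/p_\pi^X(x)>k_z\}$ risk-optimal for its coverage, and in the i.i.d.\ case the likelihood $\prod_{i=1}^{n+1}(dP/d\mu)(y_i)$ is a symmetric function of the observations, hence a function of $Z$ alone, so $g$ may be taken identically $1$ and the optimal statistic is $1/p_\pi^X(y_1,\dots,y_n)$. By the invariance noted after Corollary~\ref{cor:nfrep}, multiplying by any positive function of $z$ leaves the family of optimal sets unchanged; choosing that function to be the prior predictive density $p_\pi(y_1,\dots,y_{n+1})$ --- which is exchangeable, hence a function of $z$ --- yields the equivalent optimal statistic $p_\pi(y_1,\dots,y_{n+1})/p_\pi(y_1,\dots,y_n)=p_\pi(y_{n+1}\mid y_1,\dots,y_n)$. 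Thus the region whose section at $(y_1,\dots,y_n)$ equals $\{y_{n+1}:p_\pi(y_{n+1}\mid y_1,\dots,y_n)>k_{Z(y_1,\dots,y_{n+1})}\}$ is, for the matching thresholds, a set of exactly the form covered by Corollary~\ref{cor:nfrep}, hence risk-optimal: $P^{n+1}(A)=P^{n+1}(A^\pi)$ for all $P$ forces $R(A)\ge R(A^\pi)$, with equality only if $P^{n+1}(A\,\Delta\,A^\pi)=0$ for all $P$. The step I expect to require the most care is the $\sigma$-finiteness of $\nu_R$: it is the only assertion that genuinely uses properness of $P_\pi^X$ together with the structural hypotheses on $\mu$, and it hinges on the disjointification argument and on the index-multiset statistic factoring measurably through $Z$. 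A secondary point needing care is that A2 gives positivity of $p_\pi^X$ and of the prior predictive density only almost everywhere, so the ratio defining the FAB region --- and with it $A^\pi$ --- is pinned down only modulo null sets; this is harmless because the optimality conclusion is itself stated modulo $P^{n+1}$-null sets for every $P$.
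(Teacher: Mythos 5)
Your overall architecture matches the paper's: establish the three measure-theoretic conclusions via Corollary \ref{cor:cpcor}, then verify the hypotheses of Theorems \ref{thm:absthm} and \ref{thm:rnderiv} under A1--A2 and read off the region from Corollary \ref{cor:nfrep}. Your $\sigma$-finiteness argument for $\nu_R$ is a legitimate variant of the paper's: you disjointify $\mu$ into finite-measure pieces and partition the range of $Z$ by index multisets, whereas the paper takes a nested sequence $G_k \uparrow \mathcal Y$ with $\mu(G_k)<\infty$ and uses the increasing sets $H_{G_k}=\{P : P(G_k)=1\}$, whose $Z$-preimages are the products $G_k^{n}\times G_k$. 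Both exploit the fact that an empirical measure has at most $n+1$ support points, and both are correct. The identification of the optimal statistic ($g\equiv 1$ in the factorization, then multiplication by the exchangeable prior predictive $p_\pi(y_1,\ldots,y_{n+1})$, a function of $z$) likewise agrees with the paper's derivation in Section 4.1.

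The genuine gap is in your verification of condition 1 of Theorem \ref{thm:absthm}. You assert that pushing $P^{n+1}\ll R$ forward under $Z$ gives $\nu_P\ll\nu_R$ for every $P$, ``which is conditions 1 and 2.'' It is condition 2 only. Condition 1 is the \emph{reverse} domination: $\nu_P(H)=0$ for all $P\in\mathcal P$ must force $\nu_R(H)=0$, and this does not follow from $\nu_P\ll\nu_R$. Without it the optimality conclusion can genuinely fail --- as the discussion following Theorem \ref{thm:absthm} explains, a competitor may shrink its acceptance regions on a $\{\nu_P\}$-null set that $\nu_R$ charges and thereby beat $A^\pi$ in Bayes risk. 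The missing ingredient, which is how the paper closes this step, is to exhibit a single $P_0\in\mathcal P$ whose density $f=dP_0/d\mu$ is strictly positive (such an $f$ with $\int f\,d\mu=1$ exists by $\sigma$-finiteness of $\mu$); then $P_0^{n+1}$ and $\mu^{n+1}$ are mutually absolutely continuous, A2 makes $R$ mutually absolutely continuous with $\mu^{n+1}$, hence $R\ll P_0^{n+1}$ and $\nu_R\ll\nu_{P_0}$, which yields condition 1. With that one observation added, your proof goes through.
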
 
Some of these conditions may be relaxed. As discussed in 
\citet{chang_pollard_1997}, disintegrations of $R$ may exist under other conditions on $(\mathcal Y, \mathcal G)$. Also, $Z$ is  a complete sufficient statistic for models other than the ones mentioned in assumption A1, as described in 
\citet{bell_blackwell_breiman_1960} and \citet{mattner_1996}.

We comment that Theorem \ref{thm:boptlevel},
regarding optimality  of $A^\pi$ among procedures with 
coverage that is bounded below by $1-\alpha$, 
 does not apply to these nonparametric models
because 
under i.i.d.\ sampling the 
distributions $\{ \nu_P : P\in \mathcal P\}$  of $Z$ cannot put mass on 
arbitrarily small subsets of $\mathcal Z$.  However, 
a constant coverage 
conformal prediction procedure using $p(y_{n+1}| y_1,\ldots, y_n)$
as the conformity score will be optimal among conformal procedures with equal 
or greater constant coverage, because all constant coverage conformal procedures
have constant conditional coverage given $Z$. 

Finally, we note that just because the model $\mathcal P$ is 
nonparametric, the prior distribution $\pi$ does not need to be nonparametric 
for the conditions of Theorem \ref{thm:nppred} to hold. For example, 
if $\mathcal Y$ is a Euclidean space and $\mu$ is Lebesgue measure, 
then a prior distribution $\pi$ that has mass only on the normal 
distributions yields a $P_\pi^X$ that satisfies 
assumption A2 of the theorem. A $1-\alpha$ prediction region constructed using such 
a $P_\pi^X$ still has exact $1-\alpha$ coverage under \emph{all}
probability distributions dominated by Lebesgue measure, 
and is expected to have a small volume if the true distribution 
$P$ is a normal distribution, 
but might have a large volume 
if $P$ is far from normality.

\subsection{Normal populations} 
Let $X\sim N_p(\theta, k \Sigma)$ and 
$Y\sim N_p(\theta,\Sigma)$ be independent. 
We first consider predicting $Y$ from $X$ in the case that 
$\Sigma$ is known. In this case $Z=(X+kY)/(1+k)$ is a 
complete sufficient statistic, with $P_z^X$ being the 
$N_p(z, \Sigma k^2/(k+1))$ distribution. 
Since $Z_x$ is injective for each $x$, a Bayes-optimal 
$1-\alpha$ prediction region can be constructed 
from Equation \ref{eqn:dibopt}, so in particular, 
a value of $y$ is accepted into the prediction region 
if $p_z^X(x)/p_\pi^X(x)$ exceeds the  $\alpha$ quantile  
of $p_z^X(X)/p_\pi^X(X)$ under $X\sim P^X_z$, where 
$z=Z(x,y)$. 
If the prior information for $\theta$ is represented by 
$\theta\sim N_p(\mu,\lambda \Sigma)$ 
for some $\mu\in \mathbb R^p$ and $\lambda>0$, 
then 
$P_\pi^X$ is the $N_p(\mu, (k+\lambda)\Sigma)$ distribution. 
After some manipulation of $\log  p_z^X(x)/p_\pi^X(x)$, 
a  Bayes-optimal test statistic is obtained:
\begin{align}
t_z(x) & =\lVert  \Sigma^{-1/2}(x-z)/v^{1/2}  +  \delta_z \rVert^2
\label{eqn:nzxstat} 
\end{align}
where $\delta_z= \Sigma^{-1/2}(\mu -z) v^{1/2}/(v_\lambda - v)$ with 
$v = k^2/(k+1)$ and $v_\lambda= (k+\lambda)$, and 
$\Sigma^{1/2}$ is any matrix for which 
$\Sigma^{1/2}(\Sigma^{1/2})^\top = \Sigma$. 
Since $\Sigma^{-1/2}(x-z)/v^{1/2}
 \sim N(0,I)$ 
under $H_z: X \sim P^X_z$,
the null distribution of the statistic 
is a non-central $\chi^2_p$ distribution with noncentrality 
parameter $ \lVert \delta_z\rVert^2$. Thus, a value $y$ is included in the 
prediction region if $t_{Z(x,y)}(x)$ is less than
$\chi^2_{p,\lVert \delta_z\rVert^2,1-\alpha}$, 
the 
$1-\alpha$ quantile of this distribution. 

Some intuition for this statistic may be obtained by expressing 
it slightly differently. 
One useful re\"expression is obtained by recalling that
$z=(x+ky)/(1+k)$ on $Z(x,y)=z$, which gives 
\begin{align}
t_{Z(x,y)}(x) = \lVert\Sigma^{-1/2} (x-y)/\sqrt{k+1} + \delta_{Z(x,y)}\rVert^2,
\label{eqn:npstatF}
\end{align}
and so 
the Bayes-optimal $1-\alpha$ prediction region can be written
\begin{equation}
 A_x^\pi = \{ y : \lVert  \Sigma^{-1/2}(x - y)/\sqrt{k+1} + \delta_{Z(x,y)} \rVert^2
 < \chi^2_{p,\lVert \delta_{Z(x,y)}\rVert^2,1-\alpha} \}. 
\label{eqn:npregionF}
\end{equation}
For comparison, the usual equivariant prediction region is 
\begin{equation} 
 A_x^E= \{ y : \lVert \Sigma^{-1/2} (x-y)/\sqrt{k+1} \rVert^2 < \chi^2_{p,0,1-\alpha}\}. 
\end{equation}
Since  $\delta_z \rightarrow  0$ as $\lambda\rightarrow \infty$, this 
standard region can be viewed as a limit 
of regions of the form (\ref{eqn:npregionF}) 
under a sequence of 
increasingly non-informative prior 
distributions. 

The optimal statistic (\ref{eqn:nzxstat}) and region (\ref{eqn:npregionF}) 
can also be related to 
a fully Bayesian prediction procedure as well. 
Rearranging terms in (\ref{eqn:nzxstat})
gives 
\begin{align} 
t_{Z(x,y)} &= \lVert  \Sigma^{-1/2}( y - \hat\theta^\pi)/v_\lambda^{1/2}    \rVert^2 \times
 (k+1)/v_\lambda \label{eqn:nzystat} \\
A_x^\pi & = 
\{ y : \lVert  \Sigma^{-1/2}( y - \hat\theta^\pi)/v_\lambda^{1/2}    \rVert^2 \times
 (k+1)/v_\lambda < \chi^2_{p,\lVert\delta_{Z(x,y)}\rVert^2,1-\alpha} \}, 
\label{eqn:npregion}
\end{align}
where $\hat\theta^\pi = (X/k + \mu/\lambda)/(1/k + 1/\lambda)$ is 
the posterior mean estimator of $\theta$ given $X$, and 
$v_\lambda = (\lambda(k+1)+k)/(k+\lambda)$ 
relates to the 
prior predictive variance of 
$Y- \hat\theta$,  which is $v_\lambda \Sigma$. 
For comparison, 
the posterior predictive distribution for $Y$ under the prior 
$\theta\sim N_p(\mu,\lambda I)$ is 
$Y|\{X=x\} \sim N_p( \hat\theta^\pi , v_\lambda\Sigma)$, and so the
fully Bayesian 
$1-\alpha$ prediction region with highest posterior predictive density 
is given by
\begin{equation} A_x^B = \{ y : \lVert  \Sigma^{-1/2}( y - \hat\theta^\pi)/v_\lambda^{1/2}    \rVert^2 
 < \chi^2_{p,0,1-\alpha} \}. 
\end{equation}
However, since the 
distribution of $\lVert\Sigma^{-1/2}( y - \hat\theta^\pi)/v_\lambda^{1/2} \rVert^2$
(unconditionally on $Z$) 
depends on the unknown value of $\theta$, this fully Bayesian posterior predictive region will have frequentist coverage that varies as a function of $\theta$. 
In contrast, the 
FAB region is obtained by 
comparing (\ref{eqn:nzystat}) to its $1-\alpha$ conditional quantile 
given $Z=z$ in order to maintain constant frequentist coverage as a 
function of $\theta$. In particular, unlike the posterior predictive region, 
the region (\ref{eqn:npregion}) 
 is \emph{not} centered around 
$\hat\theta^\pi$ because the conditional quantile depends on the noncentrality parameter $\lVert\delta_z\rVert^2$, which varies as a function of $z=Z(x,y)$ and hence 
varies with $y$.

Some numerical comparisons of FAB regions to the standard equivariant regions
are given in Figures \ref{fig:np1}, \ref{fig:np2} and \ref{fig:nrisk}. 
The first figure  displays 90\% prediction intervals and widths 
as a function of $x$ for the case that $p=1$, $\sigma^2=1$, $k=1$ and 
$\mu=0$. 
As shown in the left panel for the case that $\lambda=1$, 
when $x$ is close to zero (as is expected under the prior distribution), the FAB interval is narrower than the equivariant interval 
($\lambda = \infty)$, at the cost of being wider for 
values of $x$ that are less likely under the prior distribution. 
The right panel of the figure summarizes the widths for a range of $\lambda$ 
values. The biggest 
differences between the FAB and equivariant intervals 
occur for highly informative prior distributions, that is, 
when $\lambda$ is small. In contrast, for large values of $\lambda$ the FAB interval 
can be better than the usual interval over a wide range of $x$ values, 
but the improvement is smaller. 
Figure \ref{fig:np2} displays analogous results for the case 
that $p=2$, $\Sigma=I$, $k=1$ and $\mu=(0,0)$. 
When $x$ is in accord with the prior distribution the 
FAB region can be substantially smaller than the usual region
 - close to half the area in this case. 
As $x$ moves away from $\mu$ the area increases 
in order to accommodate 
both the prior distribution and the requirement of  
90\% frequentist coverage. 
Figure \ref{fig:nrisk} compares the frequentist risk of the 
FAB procedures as a function of $\theta$ and $\lambda$ for $p=1$ and $p=2$. 
The risk differences are smaller than the volume differences
as the former are obtained by averaging the latter over the values of $X$, 
with respect to the distribution $X\sim N_p(\theta,k\Sigma)$.

\begin{figure}[ht]
\centering{\includegraphics[height=2.75in]{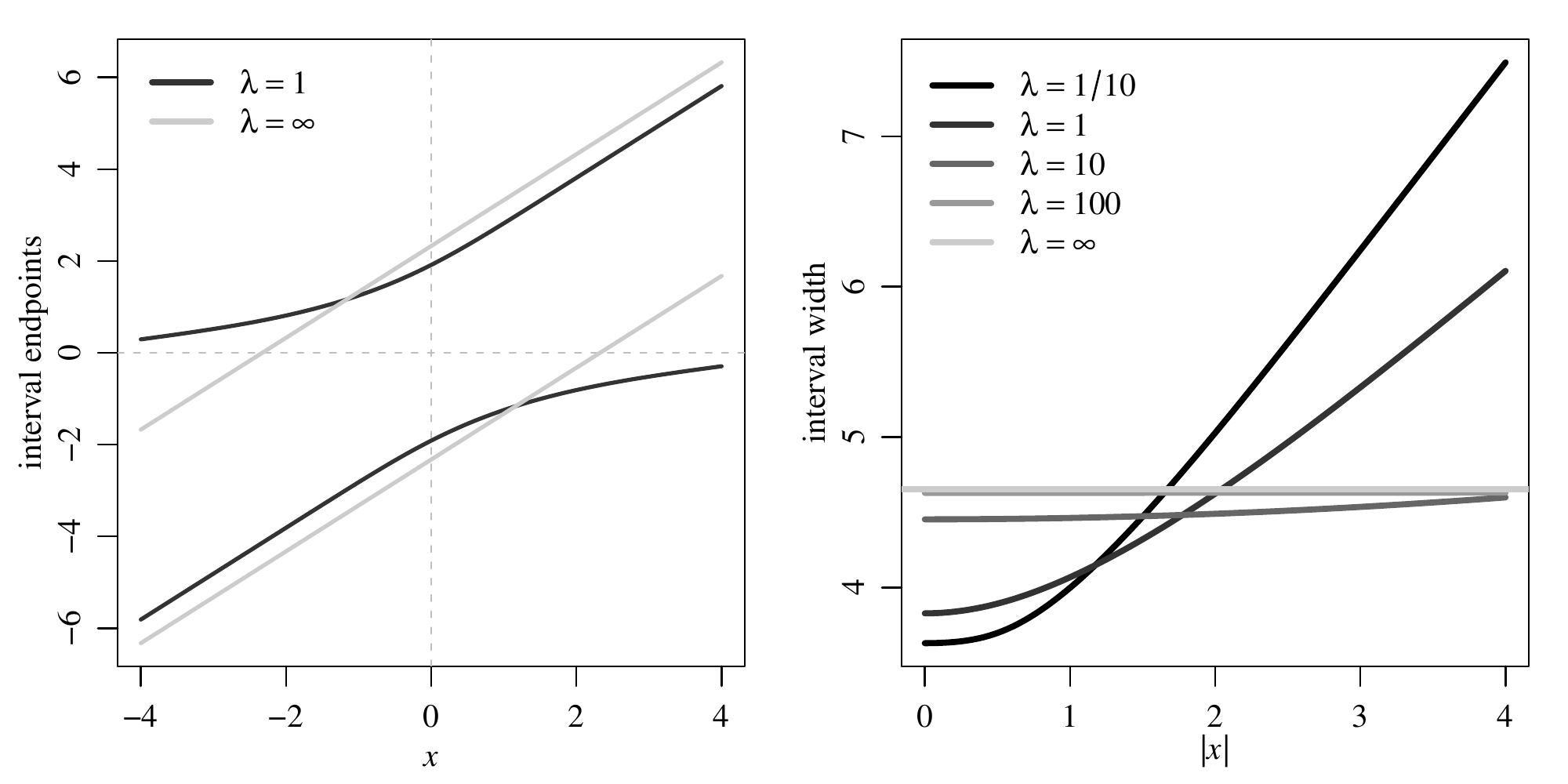}} 
\caption{Normal 90\% prediction intervals for $p=1$. Left panel: Intervals as a function of $x$ for $\lambda=1$ 
and $\lambda=\infty$. 
 Right panel: Interval widths as a function of $|x|$ for $\lambda\in \{ .1,1,10,100,\infty\}$. } 
\label{fig:np1}
\end{figure}

\begin{figure}[ht]
\centering{\includegraphics[height=2.75in]{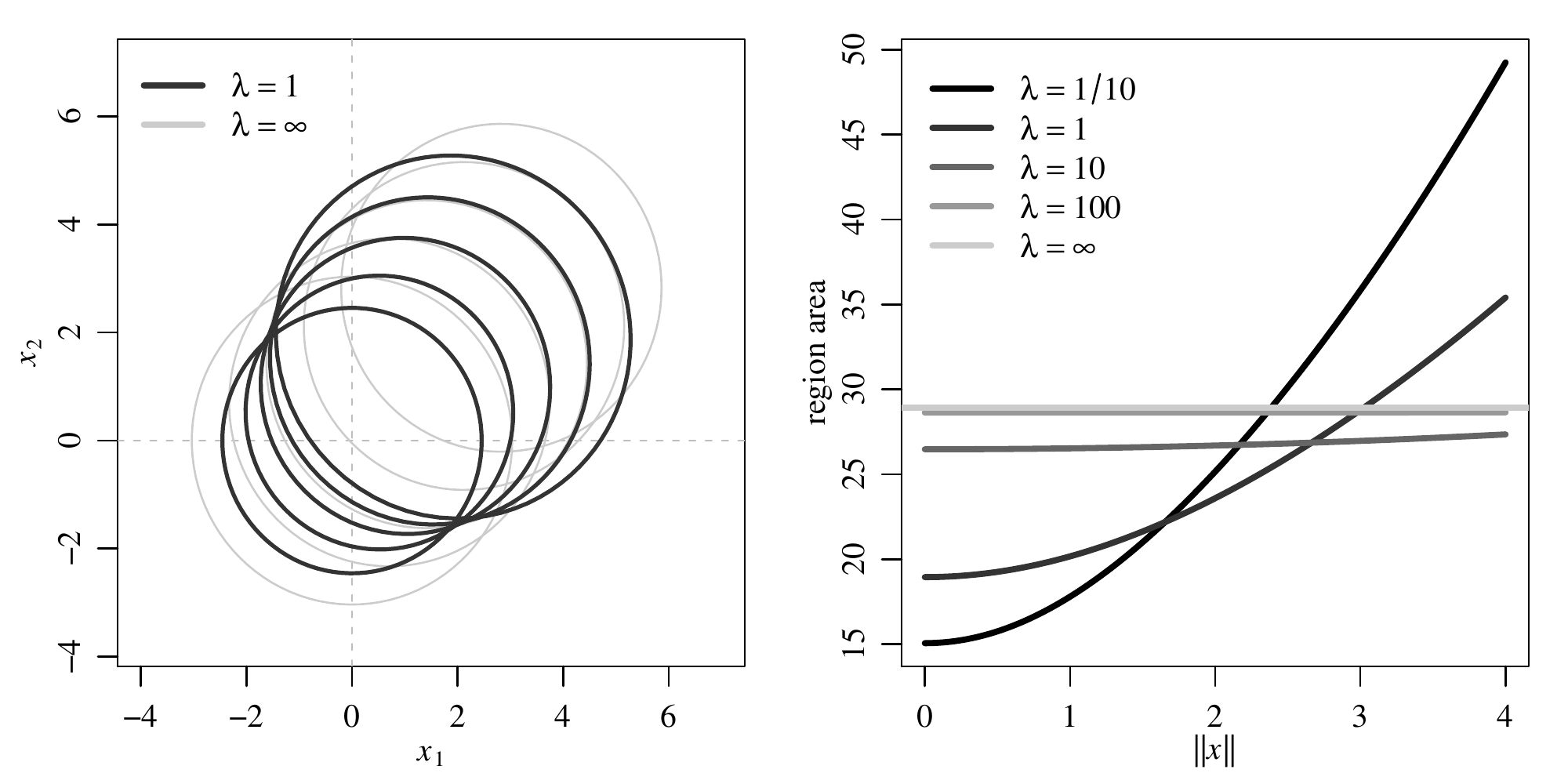}}
\caption{Normal 90\% prediction regions for $p=2$. Left panel: Regions as a function of $x$ for $\lambda=1$ 
and $\lambda=\infty$. 
 Right panel: Region areas as a function of $\lVert x\rVert$ for $\lambda\in \{ .1,1,10,100,\infty\}$. } 
\label{fig:np2}
\end{figure}

\begin{figure}[ht]
\centering{\includegraphics[height=2.75in]{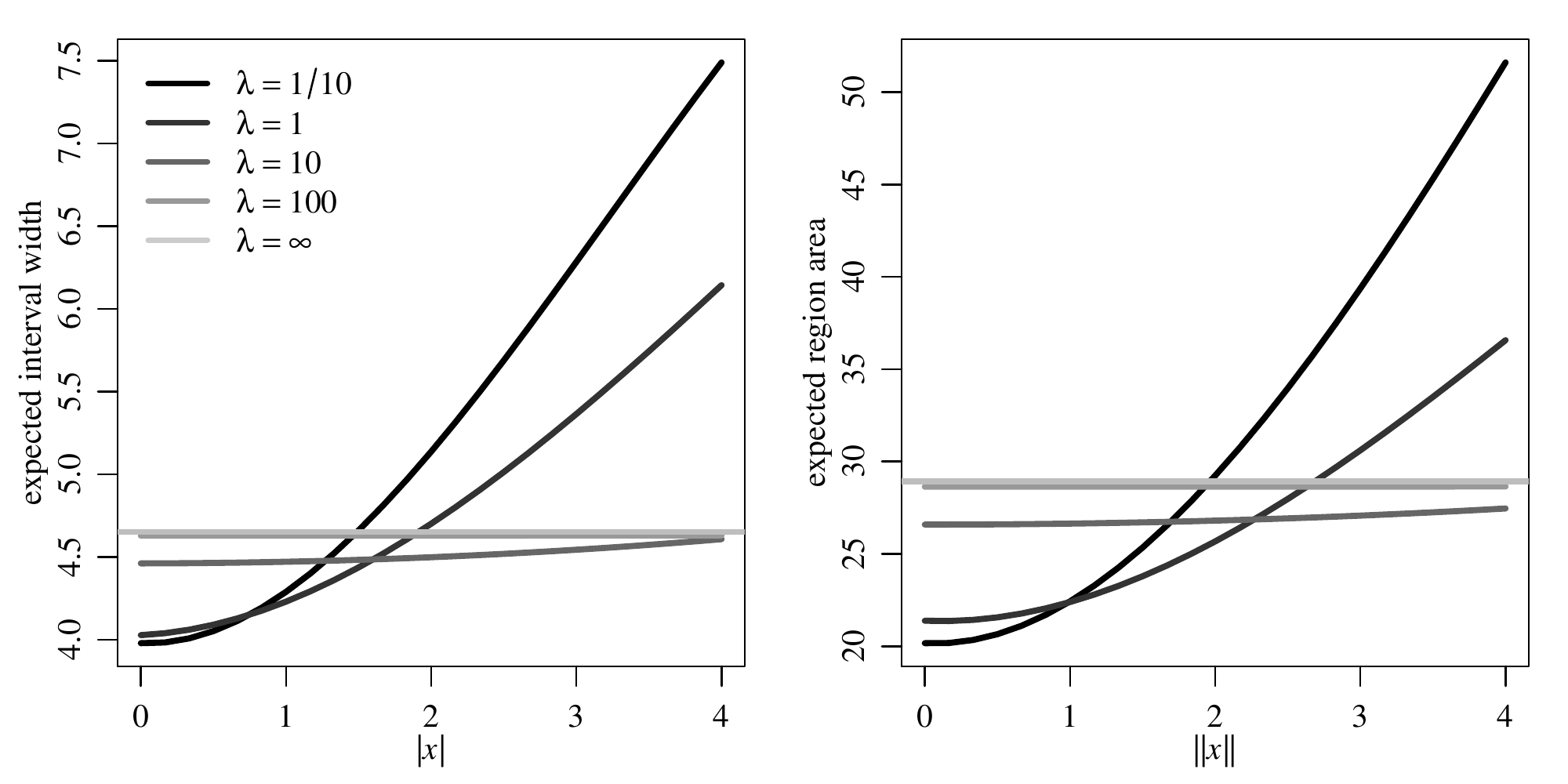}}
\caption{
Expected 90\% prediction region volumes as a function of $x$ and $\lambda$ for 
$p=1$ (left panel) and $p=2$ (right panel).  }
\label{fig:nrisk}
\end{figure}

In practice the covariance matrix $\Sigma$ will be unknown, but estimable 
from available data. For example, suppose we want to predict 
$Y\sim N_p(\theta,\Sigma)$ 
from $X=(\hat\theta, \hat\Sigma)$, where 
$\hat\theta\sim N_p(\theta,k\Sigma)$ and 
$\nu \hat\Sigma\sim$Wishart$(\nu,\Sigma)$
are independent of each other. 
A complete sufficient statistic for the joint distribution 
of $Y$ and $X$ is $Z=\{ (\hat\theta+kY)/(1+k) , \nu\hat \Sigma + YY^\top + 
   \hat\theta \hat\theta^\top/k$\}. 
Furthermore, $Z_x$ is injective for each $x$, and so 
given a prior distribution on $(\theta,\Sigma)$ one could construct 
a FAB prediction region from 
(\ref{eqn:dibopt}). However, without going into too many details, this 
approach will be quite cumbersome as it involves singular conditional 
distributions and an optimal test statistic that 
must be numerically approximated, 
at least for any prior distribution on $\Sigma$ of which I am aware. 
As an alternative, 
a simpler FAB statistic may be constructed by replacing each appearance of 
$\Sigma$ in (\ref{eqn:npstatF}) with estimates: Changing notation 
slightly, let $\hat\theta=X$, 
$Z=( X+kY )/(1+k)$ and consider the statistic
\begin{equation}
t_{Z(x,y)}(x)= \lVert \hat\Sigma^{-1/2} (x-y)/\sqrt{k+1} + \tilde  \delta_{Z(x,y)}   \rVert^2
\end{equation}
where $\tilde \delta_z = \tilde \Sigma^{-1/2}(\mu-z)v^{1/2}/(v_\lambda-v)$, 
and $\hat\Sigma^{-1/2}$ and $\tilde \Sigma^{-1/2}$ 
are estimates of 
of $\Sigma^{-1/2}$. Specifically, 
let $\hat \Sigma^{-1/2}$ be the Cholesky factorization of $\hat\Sigma^{-1}$, 
and let $\tilde \Sigma^{-1/2}$ 
be any other estimate of $\Sigma^{-1/2}$ that is deterministic (e.g.\ based on prior information) or statistically independent 
of $\hat\theta$ and $\hat\Sigma$. 
It is straightforward to show that the 
conditional distribution of $(X-Y)/\sqrt{k+1}$ given $Z=z$ is
$N_p(0,\Sigma)$, and further that the conditional distribution of 
$\hat\Sigma^{-1/2}(X-Y)/\sqrt{k+1}$ given $Z=z$ 
does not depend on the unknown 
parameters $(\mu, \Sigma)$.
Therefore, $t_{Z(x,y)}$ can be used 
to evaluate each hypothesis  $H_z:X \sim P^X_z$, and these tests may 
be inverted to obtain the approximately optimal prediction 
region 
\begin{equation}
\tilde A_x^\pi= \{ y : \lVert \hat\Sigma^{-1/2} (x-y)/\sqrt{k+1} + \tilde \delta_{Z(x,y)} \rVert^2 < q_{\tilde \delta_{Z(x,y)},1-\alpha}  \}
\label{npredT} 
\end{equation}
where $q_{b,1-\alpha}$ is the 
$1-\alpha$ quantile of the 
distribution of $\lVert T +  b \rVert^2$, 
where $T\stackrel{d}{=}
\hat\Sigma^{-1/2}(X-Y)/\sqrt{k+1}$. 
If $p=1$ then $T$ has the $t$-distribution with $\nu$ degrees of freedom.

\subsection{Linear regression}
Suppose we wish to predict $Y \sim N(v^\top \beta , \sigma^2)$ 
from $X \sim N_n( U \beta ,\sigma^2 I )$ where 
$X$ and $Y$ are independent, and $v\in \mathbb R^p$ and $U\in \mathbb R^{n\times p}$ are fixed, non-stochastic explanatory variables. We first consider the case that $\sigma^2$ is known. In this case, $Z = U^\top X + v Y$ is a complete sufficient statistic, and the joint density of $(X,Y)$ factorizes as 
$p_\beta(x,y) = h_\beta(z) \times g(x,y)$ where 
 $g(x,y) = \exp\{ -( x^\top x + y^2 )/(2\sigma^2) \}$. 
Under the prior distribution 
$\beta\sim N_p(0,\sigma^2 \Psi^{-1})$, 
the prior predictive distribution of $X$ is 
$N_p( 0, \sigma^2 ( I +  U\Psi^{-1} U^\top ) )$.
By Corollary \ref{cor:nfrep}, the FAB prediction region is formed by 
inverting tests 
that accept $H_z: (X,Y)\sim P_z$  when $g(x,y)/p^X_\pi(x)$ is large. 
After some manipulation, we have 
\begin{equation}
\label{eqn:reglrt}
-2 \sigma^2 \log  g(x,y)/p^X_\pi(x) = 
y^2 + x^\top U (  U^\top U + \Psi)^{-1} U^\top x. 
\end{equation}
On $Z(x,y)=z$ 
we have $U^\top x = z - v y$ and so 
(\ref{eqn:reglrt})  can be written 
\begin{equation}
-2 \sigma^2  \log  g(x,y)/p^X_\pi(x) =  
y^2 [ 1+ v^\top ( U^\top U+ \Psi )^{-1} v ] - 
  2 y v^\top (U^\top U  + \Psi )^{-1} z+ c_z
\end{equation}
where $c_z$ does not depend on $y$. By completing the square and rearranging, we see that an optimal 
test is one that accepts $H_z$ for small values of the 
statistic 
\begin{align} 
t_z(y) & =  
\tfrac{1}{\sigma}\sqrt{1+v^\top (U^\top U)^{-1} v}  \times 
  \left \lvert  y - \frac{ v^\top ( U^\top U+\Psi)^{-1}z }{ 1+ v^\top (U^\top U + \Psi)^{-1}v}  \right \rvert  \nonumber  \\
& =  \tfrac{\sqrt{w_0}}{\sigma} 
  \lvert 
y - v^\top [S_\psi/w_\psi ] z  
\rvert 
\label{eqn:fabregstat0}
\end{align}  
where $S_\psi = (U^\top U + \Psi)^{-1}$ and
$w_\psi = 1+v^\top S_\psi v$, and so 
in particular $S_0=(U^\top U)^{-1}$ and 
 $w_0 = 1+v^\top (U^\top U)^{-1} v$. 
To find the critical value for this test, 
rewrite (\ref{eqn:fabregstat0}) as
\begin{equation}
 t_z(y) = \left  \lvert \tfrac{\sqrt{w_0}}{\sigma} (y-v^\top [ S_0/w_0]   z) + \delta_z \right \rvert, 
\label{eqn:fabregstat}
\end{equation} 
where $\delta_z = v^\top(S_0/w_0 -S_\psi/w_\psi)z \sqrt{w_0}/\sigma$. 
Since
$Y|\{Z=z\} \sim N( v^\top  [S_0/w_0]z,\sigma^2/w_0)$, 
the critical value $q_z$ for the 
level-$\alpha$ test using this statistic is the solution in $q$
to the equation 
 $\Phi( q- \delta_z  ) -
 \Phi( -q -\delta_z  )   = 1-\alpha$, which may be obtained numerically. 
The $1-\alpha$ FAB prediction region 
therefore consists of 
$y$-values for which $t_{Z(x,y)}(y) \leq q_{Z(x,y)}$. 

The prediction region generated by the FAB test statistic (\ref{eqn:fabregstat}) bears some resemblance to 
the standard equivariant $1-\alpha$ prediction region:
The first term in   (\ref{eqn:fabregstat}) 
may be rearranged as follows:
\begin{align}
 y- v^\top[ S_0/w_0]z  &= 
 y- v^\top (U^\top U)^{-1}(U^\top x + v y )/w_0  \\ 
&= y- v^\top \hat\beta/w_0 - (w_0-1)y/w_0 \\
& = (y-\hat\beta^\top v )/w_0
\end{align}
where $\hat \beta$ is the ordinary least-squares estimate. 
Therefore, the statistic may be written 
$t_z(y) = \lvert (y-\hat \beta^\top v)/(\sigma \sqrt{w_0}) +\delta_{z}\rvert$,
and the FAB prediction region is 
\begin{equation}
A^\pi_x = \{ y : \lvert (y-\hat \beta^\top v)/(\sigma \sqrt{w_0}) +\delta_{Z(x,y)}\rvert  \leq q_{Z(x,y)} \}. 
\label{eqn:fabregols} 
\end{equation}
For comparison, the equivariant region 
based on the pivotal quantity $y-\hat \beta^\top v$
is 
$\{ y : \lvert (y-\hat \beta^\top v)/(\sigma\sqrt{w_0}) \rvert \leq 
   \Phi^{-1}(1-\alpha/2) \}$. 
This interval is a limit of FAB intervals of the 
form (\ref{eqn:fabregols}):
As the prior distribution becomes 
increasingly non-informative 
as the eigenvalues of $\Psi$ go to zero, 
we have 
$S_\psi \rightarrow S_0$ and $w_\psi \rightarrow w_0$, 
and so $\delta_z \rightarrow 0$ and 
$q_z \rightarrow \Phi^{-1}(1-\alpha/2)$.

To relate 
the FAB region to the 
fully Bayesian region derived from the posterior predictive distribution 
of $Y$ given $X=x$, note that 
\begin{align}
 v^\top (U^\top U + \Psi  )^{-1} z &= 
 v^\top (U^\top U +\Psi )^{-1}( U^\top x + v y ) \\
 &=  v^\top \hat\beta_\psi  + (w_\psi-1) y
\end{align}
where $\hat\beta_\psi = (U^\top U + \Psi)^{-1}U^\top x$ is the 
posterior mean of $\beta$ given $X=x$ 
under the prior distribution $\beta\sim N_p(0,\sigma^2 \Psi^{-1})$. 
The statistic (\ref{eqn:fabregstat}) may therefore be 
written 
\begin{equation}
t_{Z(x,y)} =  \lvert y- \hat\beta_\psi^\top v \rvert \times \sqrt{w_0}/(\sigma w_\psi ), 
\end{equation}
and the prediction region may be
expressed as 
\begin{equation}
A^\pi_x = \{ y : \hat\beta_\psi^\top v  - q_{Z(x,y)} \sigma w_\psi/\sqrt{w_0} < y <
 \hat\beta_{\psi}^\top v + q_{Z(x,y)} \sigma w_\psi/\sqrt{w_0} \}.  
\label{eqn:fabregridge} 
\end{equation}
This can be compared to the fully Bayesian interval as follows: 
The posterior distribution of $\beta$ given $X=x$ is 
$\beta|\{X=x\}
 \sim N_p( \hat\beta_\psi , \sigma^2(U^\top U + \Psi)^{-1})$, which 
gives the posterior predictive distribution 
$Y|\{X=x\} \sim N ( \hat\beta_\psi^\top v , 
            \sigma^2 w_\psi )$. 
The 
$1-\alpha$ highest posterior predictive density region
for $Y$ given $X=x$ is therefore
\begin{equation}
A_x= \{ y:  
   \hat\beta_\psi^\top v  - \Phi^{-1}(1-\alpha/2) \sigma \sqrt{w_\psi} < y <
 \hat\beta_{\psi}^\top v + \Phi^{-1}(1-\alpha/2)  \sigma \sqrt{w_\psi}\}.  
\label{eqn:bayesreg} 
\end{equation}
This posterior predictive interval is centered around the biased estimator $\hat\beta_\psi^\top v$ of $\beta^\top v$, and 
as a result will have a frequentist coverage rate that varies in $\beta$. 
In contrast, the FAB interval (\ref{eqn:fabregridge}) is 
not centered around $\hat\beta_\psi^\top v$, as the 
acceptable deviation from this estimator varies in  $y$ through 
$q_{Z(x,y)}$, which is set conditionally in order to maintain $1-\alpha$ frequentist coverage 
for all values of $\beta$.

Typically the value of $\sigma^2$ is unknown and must be estimated from the 
data. As in the previous subsection, we forego formulating a 
prior distribution for $\sigma^2$  and instead describe 
a FAB prediction region that maintains exact $1-\alpha$ frequentist 
coverage for every value of $\beta$ but is only approximately risk-optimal.
As an alternative to (\ref{eqn:fabregstat}) 
consider the statistic 
\begin{align}
t_z(y) = \lvert \tfrac{\sqrt{w_0}}{\hat \sigma}(y-v^\top [S_0/w_0]z)+\tilde \delta_z\rvert , 
\label{eqn:fabregtstat} 
\end{align}
where
$\tilde \delta_z = v^\top(S_0/w_0 -S_\psi/w_\psi)z \sqrt{w_0}/\tilde\sigma$, 
with 
$\hat\sigma^2$ 
and $\tilde \sigma^2$ being two estimates of 
$\sigma^2$ that are statistically independent of each other and 
of $U^\top X$. 
In particular, assume that $\nu \hat\sigma^2/\sigma^2\sim \chi^2_\nu$, 
in which case
$(y-v^\top [S_0/w_0]z) \sqrt{w_0}/\hat\sigma\sim t_{\nu}$ under 
$H_z: Y\sim P^Y_z$. The critical value $q_z$ for a level-$\alpha$ 
test of $H_z$
therefore satisfies
$ F_\nu( q_z- \tilde \delta_z ) - F_\nu(-q_z- \tilde \delta_z )  = 1-\alpha$,
where $F_\nu$ is the cumulative distribution function of the $t_\nu$ distribution. Rearranging terms as before, the resulting FAB prediction region can be expressed 
\begin{equation}
A^\pi_x = \{ y : \lvert (y-\hat \beta^\top v)/(\hat\sigma\sqrt{w_0}) +\tilde 
\delta_{z}\rvert  \leq q_{Z(x,y)} \}. 
\label{eqn:fabregolsT} 
\end{equation}
Appropriate values of 
$\hat\sigma^2$ and $\tilde \sigma^2$ can often be obtained 
from the data at hand: For example, 
both of these quantities may be obtained 
from a decomposition of the 
residual sum of squares from the regression of $X$ on $U$. Details on 
this approach 
are available from the replication files for this article at my website. 
Alternatively, $\hat\sigma^2$ could be the usual unbiased 
estimate of $\sigma^2$ from this regression, with $\tilde \sigma^2$ coming from 
non-stochastic prior information or other data that are independent of $X$. 

\begin{figure}[ht]
\centering{\includegraphics[height=2.5in]{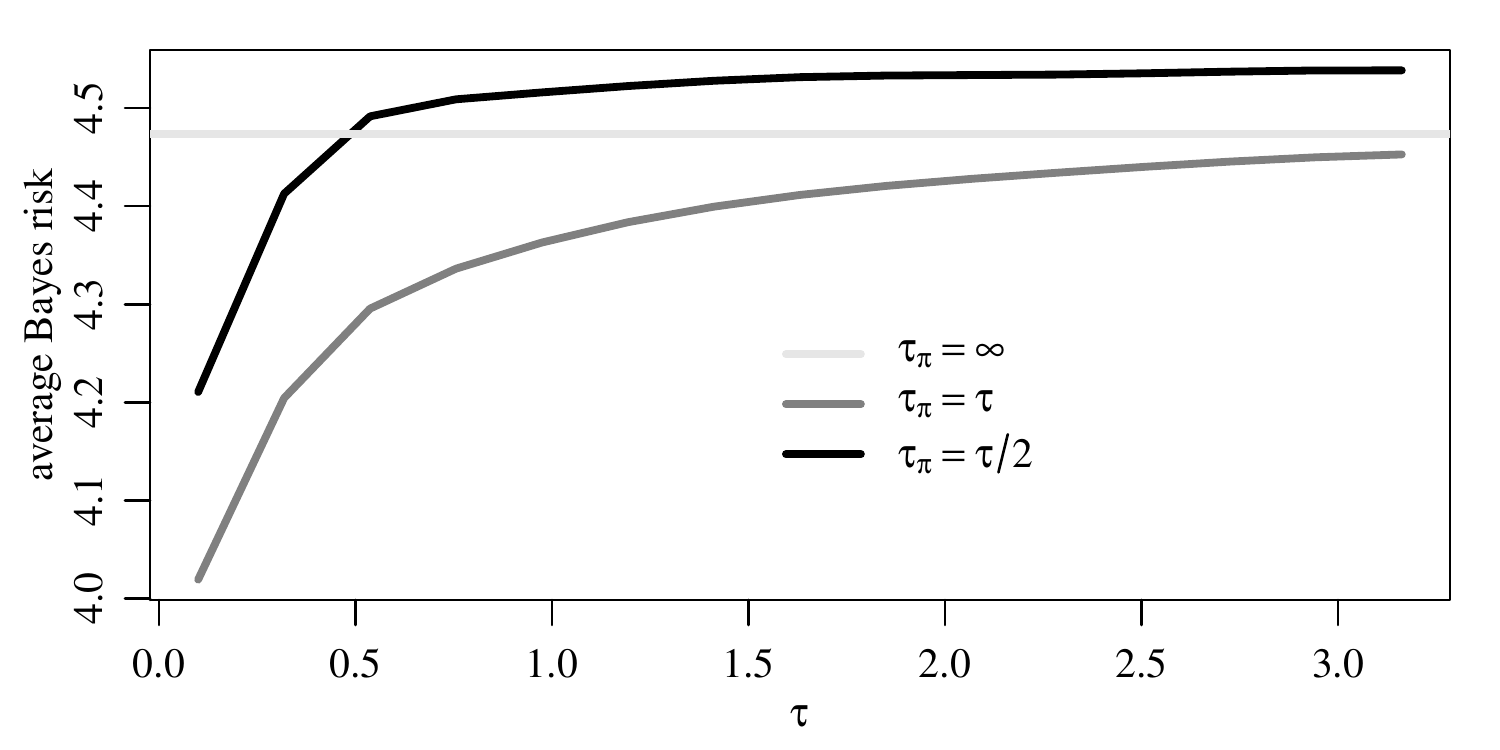}} 
\caption{Bayes risk comparisons of 90\% FAB and equivariant prediction intervals
for the normal linear model,
averaged across vectors of explanatory variables. The horizontal gray line is the risk of the equivariant interval. The medium gray line is the average Bayes risk of the FAB interval. The black line is the average Bayes risk of a FAB interval using an overly-concentrated prior distribution. } 
\label{fig:lmrisk} 
\end{figure}

Some risk comparisons are displayed in Figure \ref{fig:lmrisk}. A single 
$100\times 75$ matrix $U$ was randomly generated, with columns that 
were correlated but having 
zero mean and unit variance. 
Expected widths of regions for predicting $Y\sim N(v^\top \beta,\sigma^2)$ 
were computed, where $v$ ranged over the rows of $U$. The expected interval widths 
were averaged over these values of $v$ to obtain an average Bayes risk. 
This was done for $\sigma^2=1$ and 
under prior distributions
$\beta\sim N_{75}(0,\tau^2 I)$, where 
$\tau^2$ ranged from 1/10 to 10. 
The horizontal light gray line in the figure 
gives the 
average 
width of the standard equivariant interval
(labeled with $\tau_\pi=\infty)$, 
which 
is constant as a function of $\tau^2$. 
The medium gray line gives the average Bayes risk of the 
approximately optimal FAB procedure, given by (\ref{eqn:fabregolsT}), 
 as a function of $\tau$ 
(labeled $\tau_\pi=\tau$). For the lowest value $\tau^2=1/10$, 
the FAB interval has an average expected width that is about 10\% smaller than 
the equivariant interval. As $\tau^2$ increases 
the risk remains below that of the equivariant interval but 
the improvement decreases,  
in accord with our understanding of the equivariant interval as a limit 
of FAB intervals.  

We briefly consider the performance of the FAB procedure when the 
prior distribution is not in accord with the actual value of 
$\beta$.  
We expect that the FAB procedure will improve upon the equivariant interval 
as long as the prior distribution $\pi$ 
is sufficiently diffuse, i.e.\ the prior variance $\tau^2$ 
used to construct the FAB region is not much smaller than the true
magnitude of $\beta$. However, using a very diffuse prior will 
not provide much of a risk improvement over the equivariant interval.
Since the risk improvements are achieved when $\tau^2$ is small, 
it is more useful to consider the potential downside
to using a  prior distribution that is overly concentrated around zero. 
An example of this behavior is given by the black line in 
Figure \ref{fig:lmrisk}, which gives the Bayes risk 
of a FAB procedure for which the 
``true'' $\tau^2$ (the one used to compute the Bayes risk) is 
is four times that of $\tau_\pi^2$, the value used to construct 
the FAB region. The figure indicates that, for these values of $U$ and 
$\sigma^2$, an overly-concentrated prior distribution still yields 
a FAB region that improves upon the equivariant interval for small values 
of $\tau^2$, but can be worse for larger values. However, we reiterate that even if the prior distribution is misspecified, the the FAB procedure 
will still maintain $1-\alpha$ frequentist coverage for each value of $\beta$, no matter how large or small.

\section{Discussion}
The FAB prediction procedure introduced in this article allows for 
incorporation 
of prior or indirect information
while still maintaining a target frequentist coverage rate.
In many cases, the FAB procedure is Bayes-risk optimal among procedures 
with a given frequentist coverage rate.
In practice, 
this means that a FAB prediction region will have a smaller expected volume 
than other regions with the same coverage rate, 
if there is not a large discrepancy between the prior distribution and 
the population from which the data are to be sampled. 
If there is a large discrepancy, the FAB region 
will still maintain the target 
frequentist coverage rate but could have 
a volume that is large compared to other procedures. 
This raises the question of when, in practice, would one be confident enough in their prior distribution to use a FAB procedure? One possibility is with
 multipopulation scenarios, where the ``prior distribution'' for a
population may be obtained using data from the others, perhaps via a hierarchical model. Specifically, suppose $(X_j,Y_j)\sim P_{\theta_j}$ 
independently for $j\in\{1,\ldots, p\}$. For prediction of 
$Y_j$ from $X_j$, one could fit a hierarchical model for the 
parameters $\theta_1,\ldots, \theta_p$ using data
$X_1,\ldots, X_{j-1},X_{j+1},\ldots, X_p$. The hierarchical model 
provides indirect information about $\theta_j$ that can be used 
to construct a FAB prediction region.
The resulting region for $Y_j$ will still maintain 
frequentist coverage  because the prior distribution for 
$\theta_j$ is obtained
from data that are 
statistically independent 
of $(Y_j,X_j)$. This type of approach has been used before
for frequentist confidence interval construction \citep{yu_hoff_2018,burris_hoff_2018}. In those applications, FAB confidence intervals 
maintain population-specific frequentist coverage guarantees, while being 
narrower than standard frequentist procedures on average across populations. 

In addition to the aforementioned application to multipopulation inference, 
other areas of further research include development of methods for specific models, such as  general and generalized linear models, exponentially parameterized random graph models for networks, and 
the contamination models described in \citet{mattner_1996}. 
Nontrivial details to be worked out for each model include 
identification of optimal test statistics as well as 
efficient computational 
methods for inverting the tests to construct a prediction region. 
Additionally, in models for which 
the conditions of Theorem \ref{thm:boptlevel} do not hold, 
there remains the interesting 
open question of whether or not 
a FAB prediction region can have larger Bayes risk than a region 
with equal or greater coverage. 

\smallskip

Computer code to construct FAB prediction regions for the multivariate 
normal and normal linear regression models is available at 
\url{https://github.com/pdhoff/FABPrediction}.

\appendix 

\section*{Proofs}

\begin{proof}[Proof of Lemma \ref{lem:svfr}]  
First we show that 
the function that maps each 
set-valued function to its graphs is a bijection. 
To see that this function is surjective, note that 
for any subset $A$ of $\mathcal X\times \mathcal Y$, the 
graph of the set-valued function $x\mapsto A_x$, 
where $A_x=\{ y:(x,y)\in A\}$, is $A$. 
To see that it is injective, suppose that 
two set-valued functions, say $x\mapsto A_x$ and 
$x\mapsto A_x'$, are not the same. Then there exists 
an $\tilde x$ for which $A_{\tilde x} \neq A_{\tilde x}'$, and so there is a 
$\tilde y$ that is an element of one but not the other. Suppose 
$\tilde y$ is in $A_{\tilde x}$ but is not in $A_{\tilde x}'$. 
Then $(\tilde x,\tilde y)$ is in the 
graph of $x\mapsto A_x$ but not that of 
$x\mapsto A_x'$. 

Now we show that the function that  maps a set-valued 
function $z\mapsto A_z$ to $\cup_{z\in \mathcal Z} A_z$ is 
a bijection from item 3 to item 1. To see that it is surjective, 
note that for a given subset  $A$ of $\mathcal X\times\mathcal Y$ 
the set-valued function $z\mapsto A_z$ with $A_z= A \cap Z^{-1}\{z\}$ satisfies
$\cup_{z\in \mathcal Z} A_z = A$. To see that it is injective, 
suppose the functions 
$z\mapsto A_z$ and
$z\mapsto A_z'$, are not the same, 
and that $A_z$ and $A_z'$ are subsets of $Z^{-1}\{z\}$ for each $z$.  
Then there exists
a $\tilde z$ for which $A_{\tilde z} \neq A_{\tilde z}'$, 
and so there is a point
$(\tilde x,\tilde y)$ 
with $\tilde z = Z(\tilde x, \tilde y)$ 
that is an element of one but not the other. Suppose
$(\tilde x,\tilde y)$ is in $A_{\tilde z}$ but is not in $A_{\tilde z}'$. Then $(\tilde x,\tilde y)$ must be in 
$\cup_{z\in \mathcal Z} A_z$ because it is in $A_{\tilde z}$. 
But $(\tilde x,\tilde y)$ cannot be in 
$\cup_{z\in \mathcal Z} A_z'$, because 
it is not in $A'_{\tilde z}$, and 
the condition 
that $A_z' \subset Z^{-1}\{z\}$ for each $z$ precludes $(\tilde x, \tilde y)$ from being in 
$A'_z$ for any $z$ other than $\tilde z$.  
\end{proof}

\begin{proof}[Proof of Lemma \ref{lem:injectiverep}] 
Let  $Z_y$ be injective. Then any $A\subset \mathcal X\times \mathcal Y$
can be written as
$A =\{ (x,y): y\in C_{Z(x,y)} \}$ where
$C_z = \{ y \in f( Z^{-1}(z) ) : ( Z_y^{-1}(z),y) \in A \}$,
where
 $f$ is the canonical projection map from $2^{\mathcal X\times \mathcal Y} \rightarrow 2^\mathcal Y$. Conversely, suppose $Z_{y}$ is not injective
for a particular $y\in \mathcal Y$.
Then there exists $\{x_a,x_b\}\subset \mathcal X$ with $x_a \neq x_b$
but for which $Z(x_a,y) = Z(x_b,y)$.
Now let $A$ include $(x_a,y)$ but not $(x_b,y)$.
Then if $C$ is to represent $A$,  we must have $y \in C_{ Z(x_a,y)}$.
This would imply $y \in C_{ Z(x_b,y)}$,
since $Z(x_a,y) = Z(x_b,y)$. So $C$ cannot represent $A$, since
$(y,x_b)\not\in A$ by assumption.
\end{proof}

\begin{proof}[Proof of Corollary \ref{cor:accF}] 
Apply Lemma \ref{lem:injectiverep} with the roles of $\mathcal X$ and $\mathcal Y$ interchanged. 
\end{proof}

\begin{proof}[Proof of Theorem \ref{thm:ccrep}]   
Suppose $P_\theta(A) =1-\alpha$ for all $\theta$. 
For each $z\in \mathcal Z$ define $A_z= A \cap Z^{-1}\{z\}$.  
Then 
$\cup A_z = A \in \mathcal A$ and $A_z\subset Z^{-1}\{z\}$ so conditions 1 and 2 are satisfied. 
By 
regular sufficiency,
the set $H_1 = \{ z: P_z(Z^{-1}\{z\})=1\}$ 
has $P_\theta$-probability one for each $\theta$, 
and 
by completeness, the set $H_2= \{ z: P_z(A) =1-\alpha\}$ 
also has $P_\theta$-probability one for each $\theta$. 
Therefore, the 
set $H = H_1\cap H_2$ has $P_\theta$-probability one 
for each $\theta$. 
For each $z$ in $H$,  we therefore have 
$P_z(A_z) = P_z(A) = 1-\alpha$, and so condition 3 is satisfied.  
Conversely, suppose $\{A_z: z\in \mathcal Z\}$ defines a set-valued 
function that satisfies conditions 1, 2 and 3. 
Let $A = \cup_{z\in \mathcal Z} A_z$ and note that 
$A\cap Z^{-1}\{z\} = A_z$ by condition 2.  
Let $H=  
\{ z: P_z(A_z) = 1-\alpha \} \cap \{ z: P_z( Z^{-1}\{z\})=1\}$. 
By condition 3 and regular sufficiency of $Z$, 
$\nu_\theta(H)=1$ for all $\theta$,
where $\nu_\theta$ is the marginal probability measure of $Z$ under 
$P_\theta$. For all $\theta$ we therefore have
\begin{align}
P_\theta(A)    =  \int P_z(A) \, \nu_\theta(dz)  
    & =  \int_H P_z(A\cap Z^{-1}\{z\}) \, \nu_\theta(dz) \\
    & = \int_H P_z(A_z) \, \nu_\theta(dz) \\ 
    & = \int_H (1-\alpha) \, \nu_\theta(dz)  = 1-\alpha. 
\end{align}  
\end{proof}

\begin{proof}[Proof of Theorem \ref{thm:htrep}]
The proof is essentially the same as that of Theorem \ref{thm:ccrep}
and so is omitted. 
\end{proof}

\begin{proof}[Proof of Lemma \ref{lem:nplemma}]   
For sets $A$ and $\tilde A$ described in the lemma, 
\begin{equation}
  0 \leq  P(A) - P(\tilde A) = P(A\cap \tilde A^c ) - P(A^c \cap \tilde A).
\end{equation} 
Similarly, 
$R(A) - R(\tilde A) = R(A\cap \tilde A^c) - R(A^c \cap \tilde A)$. 
The first term in this  difference is 
\begin{align}
R(A\cap \tilde A^c) &= \int_{A \cap \tilde A^c} r(\omega) \, \lambda(d\omega)\\ 
  & \geq   \int_{A \cap \tilde A^c}  [p(\omega)/k] \, \lambda(d\omega) 
  = P(A\cap \tilde A^c )/k.
\end{align}  
As for the second term in the risk difference, in the case that
$P(A^c \cap \tilde A)>0$
we have 
\begin{equation} P(A^c \cap \tilde A)/k = \int_{A^c\cap \tilde A} [ p(\omega)/k] 
\, \lambda(d\omega) > \int_{A^c\cap \tilde A} r(\omega) \lambda(d\omega) =
  R(A^c\cap \tilde A) \end{equation}
  because $r(\omega) < p(\omega)/k$ on  $\tilde A$. 
Therefore, if 
$P(A^c \cap \tilde A) >0$, we have  
$R(A\cap \tilde A^c) \geq P(A\cap \tilde A^c)/k$, 
$R(A^c\cap \tilde A) < P(A^c\cap \tilde A)/k$,  and so 
\begin{align}
R(A) - R(\tilde A) & = R(A\cap \tilde A^c) - R(A^c \cap \tilde A) \\ 
   & > [ P(A\cap \tilde A^c) - P(A^c \cap \tilde A)  ]/k \geq 0. 
\end{align}
In the case that 
$P(A^c \cap \tilde A) =0$
we must also have $R(A^c\cap \tilde A) =0$ since $r(\omega) < 
    p(\omega)/k$  on $\tilde A$.  The risk difference in this case is then  
    just $R(A \cap \tilde A^c)$, which was already shown to be 
    greater than or equal to $P(A \cap \tilde A^c)$, which is
greater than or equal to $P(A \cap \tilde A^c)$, which is zero in this case. 
Therefore, 
the risk difference is positive and can only be zero 
if $P(A \Delta \tilde A) = 0$. 
\end{proof}

\begin{proof}[Proof of Theorem \ref{thm:absthm}]  
Let $A_z= A \cap Z^{-1}\{z\}$ for each $z$. 
By regular sufficiency and completeness, there exists 
a set $H_P\in \mathcal H$ such that
$\nu_\theta(H_P) = 1$ for all $\theta$ and 
$P_z(A^\pi_z) = P_z(A_z)$ for all 
$z\in H_P$.  
Let $H_R= \{ z: 
     R_z( Z^{-1}\{z\} ) = 1-R_z(Z^{-1}\{z\}^c) = 1 \}$, that is, 
the subset of $\mathcal Z$ for which $R_z$ is a 
probability measure concentrated on $Z^{-1}\{z\}$.
Now $\nu_R(H_P^c)=0$ by assumption 1, and 
$\nu_R(H_R^c) =0$ by Theorem 2 of \citet{chang_pollard_1997}, 
and so $\nu_R( (H_P \cap H_R)^c )=0$  as well. 
This implies that 
\begin{align}
R(A)   = \int R_z( A ) \, \nu_R(dz) 
 &=  \int_{H_P \cap H_R } R_z(A_z) \, \nu_R(dz)  \\
&  \geq \int_{H_P \cap H_R} R_z(A^\pi_z) \, \nu_R(dz) = 
   R(A^\pi), 
\end{align}
where the second line holds because $R_z(Z^{-1}\{z\}^c )=0$ 
for all $z \in H_R$, 
and the third line holds because for $z\in H_P\cap H_R$, 
$R_z$ is a probability measure ($z\in H_R$), 
$P_z(A_z) = P_z(A_z^\pi)$ ($z\in H_P$), and 
so by Lemma \ref{lem:nplemma} and the form of $A^\pi_z$, 
we have $R_z(A_z) \geq R_z(A_z^\pi)$.  

Condition 2 implies almost sure
uniqueness of $A^\pi$ as a minimizer of risk, 
as long as the risk is finite:
Suppose $R(A^\pi)<\infty$ and $R(A) = R(A^\pi)$. 
Then the set $\{ z: R_z(A_z) > R_z(A^\pi_z) \}$ has 
$\nu_R$-measure zero. 
This implies
that this set also has $\nu_\theta$-measure zero for every $\theta$
by condition 2, and so 
$R_z(A_z) = R_z(A^\pi_z)$
with probability one under each $\nu_\theta$. 
But by Lemma \ref{lem:nplemma}, 
$R_z(A_z) = R_z(A^\pi_z)$ is only possible for a given $z$ 
if $P_z(A_z \Delta A_z^\pi) = 0.$ Therefore, 
we must have $P_z(A_z \Delta A_z^\pi) = 0$ with $\nu_\theta$-probability 
one for each $\theta$. Therefore, equality of finite risks implies 
\begin{align}  
P_\theta(A\Delta A^\pi) &=  
\int P_z( A\Delta A^\pi ) \, \nu_\theta(dZ)   \\
&  = \int P_z( A_z\Delta A^\pi_z ) \, \nu_\theta(dZ) =0. 
\end{align}
\end{proof}

\begin{proof}[Proof of Corollary \ref{cor:cpcor}]   
Let $\gamma$ be a $\sigma$-finite Radon measure on a metric space 
$\Omega$ with Borel sets $\mathcal A$, and let $Z:(\Omega,\mathcal A)
\rightarrow  ( \mathcal Z, \mathcal H)$ be a measurable function. Theorem 1 of \citet{chang_pollard_1997} says that if $\mathcal H$ is countably generated and contains the 
singleton sets, and the image measure $\nu$
 of $\gamma$ under $\mathcal Z$  
is $\sigma$-finite, then $\gamma$ has a $(Z,\nu)$ 
disintegration. 
Corollary \ref{cor:cpcor} is a corollary of this theorem if, under the 
assumptions of the corollary, 
\begin{enumerate}
\item $R$ is a $\sigma$-finite Radon measure on 
the product $\sigma$-algebra
$\mathcal A=\mathcal F\otimes \mathcal G$, and 
\item $\mathcal H$ is countably generated and contains the 
singleton sets. 
\end{enumerate}
Item 2 holds under the assumption that $\mathcal Z$ 
is a separable metric space and $\mathcal H$ the Borel $\sigma$-algebra.
Regarding item 1, 
since $P^X_\pi$ and $\mu$ are both $\sigma$-finite, so is their 
product measure $R$. Also,  
since $\mathcal X$ is complete and separable then 
any probability measure on $(\mathcal X, \mathcal F)$, including $P_\pi^X$, 
is a Radon measure \citep[Theorem 1.4]{billingsley_1968}. 
By assumption, $\mu$ is also a $\sigma$-finite Radon measure. 
Now let $\mathcal B$ be the Borel sets of $\mathcal X\times \mathcal Y$ 
under the product topology, and recall that $\mathcal A \subset \mathcal B$
in general. 
By 
\citet[Theorem 9.6.3]{lang_1993}, 
assuming $\mathcal X$ and $\mathcal Y$ are $\sigma$-compact, 
there exists a unique Radon measure $\tilde R$ on 
$\mathcal B$ that matches $R$ on $\mathcal A$. 
But since  $\mathcal X$ and $\mathcal Y$ are separable spaces, 
$\mathcal A = \mathcal B$  \citep[Theorem 6.4.2]{bogachev_2007}.  
Therefore, $R = \tilde R$ and so 
$R$ is a Radon measure. 
\end{proof}

\begin{proof}[Proof of Lemma \ref{lem:excomplete} ]
Define $N_\delta = \{ z: h(z) < -\delta \}$ for $\delta> 0$. We will first show that
$\nu_\theta(N_\delta) = 0$ for all $\theta$ by contradiction.
Suppose $N_\delta$ is non-null. Then by the assumption, for any $\epsilon>0$
there exists  a $\theta_\epsilon$
such that $\nu_{\theta_\epsilon}(N_\delta) > 1-\epsilon$. This implies
\begin{align}
\int h(z) \, \nu_{\theta_\epsilon}(dz) &= \int_{N_\delta} h(z)  \, \nu_{\theta_\epsilon}(dz) +
  \int_{N_\delta^c} h(z)  \, \nu_{\theta_\epsilon}(dz) \\
  &  < -\delta (1-\epsilon) + \lVert h\rVert_{\infty} \epsilon
\end{align}
which is less than zero for $\epsilon< \delta/(\delta+ \lVert h\rVert_\infty)$.
Therefore, if $\int h(z) \, \nu_\theta(dz) \geq 0$ for all $\theta$,
then $\nu_\theta(N_\delta) =0$ for all $\delta>0$ and  $\theta$.
Now note that $\{z: h(z)<0\} = \cup_{k=1}^\infty N_{1/k}$, and so
\begin{align}
\nu_\theta(\{z: h(z)<0\})   = \nu_\theta( \cup_{k=1}^\infty N_{1/k}) 
                  = \lim_{k\rightarrow \infty} \nu_\theta(N_{1/k}) = 0
\end{align}
because
$N_{1/k} \subset  N_{1/(k+1)}$ and
 $\nu_\theta(N_{1/k})=0$ for all positive integers $k$.
\end{proof}

\begin{proof}[Proof of Theorem \ref{thm:boptlevel}]
Let $A\in \mathcal A$ satisfy $P_\theta(A)\geq P_\theta(A^\pi)$ for all $\theta\in \Theta$. Then $\int (P_z(A) - P_z(A^\pi))\, \nu_\theta(dz)$ is non-negative
for
all $\theta$, and so by Lemma \ref{lem:excomplete}, $P_z(A) \geq P_z(A^\pi)$ 
almost surely $\nu_\theta$ for each $\theta$. 
By Lemma \ref{lem:nplemma} we then also have 
$R_z(A) \geq R_z(A^\pi)$ almost surely $\nu_\theta$ for each $\theta$, 
which implies  $R_z(A) \geq R_z(A^\pi)$ almost everywhere $\nu_R$ by 
Assumption 1 of Theorem \ref{thm:absthm}. 
Finally, this gives 
\begin{equation}
R(A)-R(A^\pi)=\int( R_z(A) - R_z(A^\pi) ) \, \nu_R(dz)  \geq 0. 
\end{equation}
\end{proof}

\begin{proof}[Proof of Theorem \ref{thm:rnderiv}]  
The density of $P_\theta$ with respect to $R$ is 
$h_\theta(Z(x,y)) g(x,y)/p_\pi^X(x)$, and so 
\begin{align}
P_\theta(A)&  = \int_A h_\theta(Z(x,y)) [g(x,y)/p_\pi^X(x)] \, R(dx \, dy) 
  \nonumber \\ 
&= \int \left ( \int_A h_\theta(Z(x,y)) [g(x,y)/p_\pi^X(x)]  \, R_z(dx \, dy)  
\right ) \, \nu_R(dz)  \nonumber  \\
&= \int h_\theta(z) 
  \left ( \int_A  [g(x,y)/p_\pi^X(x)] \, R_z(dx\, dy)  \right ) \, 
   \nu_R(dz) \nonumber  
\equiv  \int h_\theta(z) Q_z(A)  \, \nu_R(dz),  \label{eqn:PthetaQ}
\end{align}
where we have defined for each $z$ the measure $Q_z$ on $(\mathcal X\times \mathcal Y, \mathcal F\otimes \mathcal G)$ such that
$[dQ_z/dR_z](x,y) = g(x,y)/p_\pi^X(x)$.  
By Fubini's theorem, 
$Q_z( \mathcal X\times \mathcal Y)$ is finite for $\nu_R$-almost all $z$
since $P_\theta(\mathcal X\times \mathcal Y)$ is finite. 
Additionally, since 
$R_z$ concentrates on $Z^{-1}\{z\}$ 
for $\nu_R$-almost all $z$ and $Q_z\ll R_z$ for each $z$, we have that 
 $Q_z$ concentrates on $Z^{-1}\{z\}$ 
for $\nu_R$-almost all $z$ as well. 

The measures $\{ Q_z : z\in \mathcal Z\}$ can be related to 
a conditional distribution $\{P_z:z\in \mathcal Z\}$ 
for the family $\{ P_\theta:\theta\in \Theta \}$, which 
then gives an expression for $dP_z/dR_z$.  
First, note that 
$\{ h_\theta(z)Q_z : z\in \mathcal Z\}$ gives a 
$\{ Z, \nu_R\}$-disintegration of $P_\theta$.  
For $H\in \mathcal H$,
\begin{align}
\nu_\theta(H) \equiv P_\theta(Z^{-1} H) & = 
 \int h_\theta(z) Q_z(Z^{-1} H)  \, \nu_R(dz) \\  
&= \int h_\theta(z) \left(  \int 1( Z(x,y) \in H) \, Q_z(dx\, dy) \right )  
   \, \nu_R(dz) \\ 
&= \int_H h_\theta(z) \int Q_z(dx \, dy) \, \nu_R(dz)    \\
&= \int_H h_\theta(z) Q_z(\mathcal X\times \mathcal Y) \, \nu_R(dz),  
\end{align}
and so 
$[d \nu_\theta/ d\nu_R](z) = h_\theta(z) Q_z(\mathcal X\times \mathcal Y)$. 
Therefore, continuing from (\ref{eqn:PthetaQ}) we have
\begin{align} 
P_\theta(A) & = \int h_\theta(z) Q_z(A) \, \nu_R(dz) \nonumber \\  &= 
       \int [Q_z(A)/Q_z(\mathcal X\times \mathcal Y)] h_\theta(z) 
        Q_z(\mathcal X\times \mathcal Y)  \, \nu_R(dz) \nonumber  \\
 & = \int [Q_z(A)/Q_z(\mathcal X\times \mathcal Y)]  \, \nu_\theta(dz) 
  \equiv   \int P_z(A) \, \nu_\theta(dz)  
\label{eqn:rcp} 
\end{align}
so that 
$P_z$ is a probability measure
on $(\mathcal X\times \mathcal Y, \mathcal A)$. 
We now show that $\{P_z:z\in \mathcal Z\}$ is a regular conditional 
probability distribution of $P_\theta$ given $Z$ for each $\theta$. 
First, 
$P_z(Z^{-1}\{z\}) =1$ 
for $\nu_R$-almost all $z$ and so also for $\nu_\theta$-almost all $z$ 
for each $\theta$, 
since $P_\theta\ll R$ for each $\theta$.  
Additionally, the map $z \rightarrow P_z(A)$ is measurable 
for each $A\in \mathcal A$. To see this, recall that $z\rightarrow R_z(A)$ 
is measurable by assumption, and so $z\rightarrow \int f(x,y) \, R_z(dx dy)$ 
is measurable for any simple function $f$. Letting
$f_k(x,y) \uparrow g(x,y)/p_\pi^X(x)$ as $k\rightarrow \infty$ 
we have $Q_z(A) = \lim_{k\rightarrow \infty} \int f_k(x,y) \, R_z(dx dy)$, 
so $z\mapsto Q_z(A)$ is a limit of measurable functions and so is measurable. 
Thus, $z\mapsto P_z(A)= Q_z(A)/Q_z(\mathcal X\times \mathcal Y)$ 
is measurable as well. Finally, 
\begin{equation} dP_z/dR_z = [ Q_z(\mathcal X\times \mathcal Y)  ]^{-1} dQ_z/dR_z =
[ Q_z(\mathcal X\times \mathcal Y)  ]^{-1} g(x,y)/p_\pi^X(x) .  \end{equation}
\end{proof}

\begin{proof}[Proof of Corollary \ref{cor:nfrep}] 
The corollary results from plugging in the form of $dP_z/dR_z$ from 
Theorem \ref{thm:rnderiv} into the expression for the optimal 
set in Theorem \ref{thm:absthm}.
\end{proof}

\begin{proof}[Proof of Theorem \ref{thm:nppred}]  
By Corollary \ref{cor:cpcor}, item 3 will be true if items 1 and 
2 are true. 
Regarding item 1, 
the risk measure $R= P_\pi^X\times \mu$  
is a $\sigma$-finite Radon measure by the same arguments as in 
the proof of 
Corollary \ref{cor:cpcor}. Regarding item 2, 
first we show that the sufficient statistic $Z$ is a 
measurable map from $(\mathcal Y^{n+1},\mathcal G^{n+1})$ to 
$(\mathcal Z,\mathcal H)$, 
where $\mathcal Z$ is the 
subset of $\mathcal P$ consisting of measures with $n+1$ 
or fewer support points and $\mathcal H$ is the $\sigma$-algebra 
generated by the weak topology on $\mathcal Z$. 
Because $\mathcal Y$ is separable, 
$\mathcal Z$ 
with this topology can be separably metrized
 \citep[Theorem 6.2]{parthasarathy_2005}.  
In this case, 
$\mathcal H$ is the same as the $\sigma$-algebra generated by 
sets of the form $\{ P\in \mathcal P : P(G)<c\}$ for 
$c\in [0,1]$,  $G\in \mathcal G$ \citep{karr_1991}. 
Let $H$ be such a set. Then 
$Z^{-1}H = \{ (y_1,\ldots,y_{n+1})\in \mathcal Y^{n+1} : \sum 1(y_i \in G)/(n+1) < c \}$, which 
is a measurable subset of $\mathcal G^{n+1}$. 

Let $\nu_R$ be the image measure of $R$ under $Z$, defined 
by $\nu_R(H) = R(Z^{-1} H)$ for $H\in \mathcal H$. 
For any $G\in \mathcal G$, define 
$H_G =\{ P\in \mathcal Z : P(G) =1 \}$. 
Then $Z^{-1} H_G =\{ (y_1,\ldots, y_{n+1})\in \mathcal Y^{n+1} : \{ y_1,\ldots, y_{n+1} \} 
 \subset G \} =  G^n$.  
Therefore, $\nu_R( H_G ) = P_\pi^X (G^n) \times \mu(G)$, 
which is less than infinity if $\mu(G)<\infty$. 
Now by $\sigma$-finiteness of $\mu$, 
there exists a countable set $\{ G_k: k\in \mathbb N\} \subset \mathcal G$ 
such that $\cup_1^\infty G_k = \mathcal Y$,
$G_k \subset G_{k+1}$ and 
$\mu(G_k) <\infty$ for each $k\in \mathbb N$. 
Therefore, $\nu_{R}(H_{G_k}) <\infty$ for each $k\in \mathbb N$ as well. 
We now show $\cup_k H_{G_k} = \mathcal Z$: 
Because $\cup_k G_k = \mathcal Y$, 
for each $y\in \mathcal Y$ there is some $k_y$ such that $y\in G_l$
for all $l\geq k_y$. 
Let $z\in \mathcal Z$, and let $y_1,\ldots, y_{m}$ be the support points 
of $z$ ($m$ could be $n+1$ or smaller).  
Then $z\in H_{G_k^*}$ where 
$k^* = \max \{ k_{y_1},\ldots, k_{y_m} \}$. 
Thus $\cup_k H_{G_k} = \mathcal Z$, and so 
$\nu_R$ is $\sigma$-finite. The assumptions of 
Corollary \ref{cor:cpcor} are met and so 
$R$ has a $(Z,\nu_R)$-disintegration.  

Complete sufficiency of $Z$ under assumption A1 follows from
\citet{bell_blackwell_breiman_1960}. 
The remaining conditions for $A^\pi$ to be risk-optimal for its coverage 
function 
are conditions 1 and 2 
of Theorem \ref{thm:absthm} and
condition 1 of 
Theorem \ref{thm:rnderiv}.  Assumption A2 immediately 
implies the last two of these. 
The remaining condition is that if 
$P^{n+1}(Z^{-1} H) =0$ for each 
$P\in \mathcal P$, 
we must have $\nu_R(H)=0$. 
Assumption A2 implies something stronger, that 
for $E\in \mathcal G^{n+1}$, 
$P^{n+1}(E) = 0$ for all $P\in \mathcal P$ implies 
$R(E) = 0$, or conversely, that 
$R(E)>0$ implies there exists a $P\in \mathcal P$ such that 
$P^{n+1}(E) > 0$. To see this, recall that $\sigma$-finiteness of $\mu$ 
implies the existence of a strictly positive function 
$f$ on $\mathcal Y$ for which $\int f(y) \mu(dy) = 1$. Let 
$P$ be the measure on $(\mathcal Y,\mathcal G)$ 
with density $f$ with respect to $\mu$, and 
$P^{n+1}$ its $n+1$-fold product measure. 
Then $P^{n+1}(E) >0$ if $\mu^{n+1}(E)>0$ or equivalently, if $R(E)>0$. 
\end{proof}

\bibliography{fpredict}

\end{document}